\renewcommand\section{\@startsection {section}{1}{\z@}%
	{-3.5ex \@plus -1ex \@minus -.2ex}%
	{2.3ex \@plus.2ex}%
	{\normalfont\fontfamily{phv}\fontsize{16}{19}\bfseries}}
\renewcommand\subsection{\@startsection{subsection}{2}{\z@}%
	{-3.25ex\@plus -1ex \@minus -.2ex}%
	{1.5ex \@plus .2ex}%
	{\normalfont\fontfamily{phv}\fontsize{14}{17}\bfseries}}
\renewcommand\subsubsection{\@startsection{subsubsection}{3}{\z@}%
	{-3.25ex\@plus -1ex \@minus -.2ex}%
	{1.5ex \@plus .2ex}%
	{\normalfont\normalsize\fontfamily{phv}\fontsize{14}{17}\selectfont}}
\tikzset{
	font={\fontsize{10pt}{10}\selectfont}}
\newcommand{\ubar}[1]{\underaccent{\bar}{#1}}
\newcommand{\R}{\mathbb{R}}
\newcommand{\USB}{\mathcal{U}_{\text{SB}}}
\newcommand{\set}[2]{\left\{#1 \; \left|\;\; #2 \right.\right\}}
\newcommand{\abs}[1]{\left|#1\right|}
\newcommand{\card}[1]{\abs{#1}}
\newcommand{\ext}{\text{ext}}
\newcommand{\shimrit}[1]{\textcolor{black}{#1}}
\newcommand{\noam}[1]{\textcolor{black}{#1}}
\newcommand{\noamr}[1]{\textcolor{black}{#1}}
\newcommand{\shimritr}[1]{\textcolor{black}{#1}}
\newcommand{\noamrr}[1]{\textcolor{black}{#1}}
\newcommand{\vectornorm}[1]{\left|\left|#1\right|\right|}
\newcommand{\bu}{{\bf u}}
\newcommand{\bv}{{\bf v}}
\newcommand{\bx}{{\bf x}}
\newcommand{\bbd}{{\bf D}}
\newcommand{\bd}{{\bf d}}
\newcommand{\bphi}{\boldsymbol{\phi}}
\newcommand{\norm}[1]{\|#1\|}
\DeclareMathOperator{\argmax}{argmax}
\newtheorem{theorem}{Theorem}
\newtheorem{observation}[theorem]{Observation}
\newtheorem{assumption}[theorem]{Assumption}
\newtheorem{lemma}[theorem]{Lemma}
\newtheorem{proposition}[theorem]{Proposition}
\DeclareMathOperator{\adjacent}{adjacent}
\DeclareMathOperator{\Infs}{Infs}
\DeclareMathOperator{\Cols}{Cols}
\DeclareMathOperator{\dist}{{dist}}
\newcommand{\loc}{a}
\definecolor{myblue1}{RGB}{35,119,189}
\definecolor{myblue2}{RGB}{95,179,238}
\definecolor{myblue3}{RGB}{129,168,207}
\definecolor{myblue4}{RGB}{26,89,142}
\definecolor{deepred}{RGB}{180,0,0}
\definecolor{darkred}{RGB}{139,0,0}
\definecolor{deepgreen}{RGB}{0,140,0}
\definecolor{darkgreen}{RGB}{0,100,0}
\definecolor{deepblue}{RGB}{0,30,200}
\definecolor{darkblue}{RGB}{0,15,100}
\definecolor{Red}{rgb}{1, 0, 0} 
\definecolor{Green}{rgb}{0.2, .8, 0} 
\definecolor{Blue}{rgb}{.255,.41,.884} %
\begin{document}
\def\spacingset#1{\renewcommand{\baselinestretch}%
	{#1}\small\normalsize} \spacingset{1}
	
\title{Robust Radiotherapy Planning with Spatially Based Uncertainty Sets}
\author{}

\author{Noam Goldberg $^a$, Mark Langer $^b$, and Shimrit Shtern $^c$ \\
	$^a$ Department of Management, Bar-Ilan University, Ramat Gan, Israel \\
	$^b$ School of Medicine, Indiana University, Indianapolis, IN, USA\\ 
	$^c$ Faculty of Data and Decision Sciences, Technion - Israel Institute of Technology, Haifa, Israel }
\date{}

\maketitle

\begin{abstract}
Radiotherapy treatment planning is a challenging large-scale optimization problem plagued by uncertainty. Following the robust optimization methodology, we propose a novel,
spatially based uncertainty set for robust modeling of radiotherapy
planning, producing solutions that are immune to unexpected changes in biological conditions. Our proposed uncertainty set realistically captures biological radiosensitivity patterns that are observed using recent advances in imaging, while its parameters can be personalized for individual patients. We exploit the structure of this set to devise a compact reformulation of the robust model. We develop a row-generation scheme to solve real, large-scale 
instances of the robust model. This 
method is then extended to a relaxation-based scheme for enforcing challenging, yet clinically important, dose-volume cardinality constraints. 
The computational performance of 
our algorithms, as well as the quality and robustness of the computed treatment plans, are demonstrated on simulated and real imaging data. Based on accepted performance measures, such as minimal target dose and homogeneity, these examples demonstrate that the spatially robust model achieves almost the same performance as the nominal model in the nominal scenario, and otherwise, the spatial model outperforms both the nominal and the box-uncertainty models. 
\end{abstract}
	\noindent%
{\it Keywords:} \emph{Radiotherapy planning}; \emph{robust optimization}; \emph{biomarker uncertainty}; \emph{row and column generation}.
\spacingset{1.5}

\section{Introduction\label{sec:INTRO}}
Radiotherapy treatment, and in particular intensity modulated radiation therapy (IMRT), involves setting up an array of beams with variable intensities to irradiate a tumor.  
Planning such a treatment entails selecting the intensity of each beam or beam unit (called a beamlet). To facilitate such planning, the effect of the beam intensities -- that is, the resulting radiation dose (whose unit of measure is 1 Gy = 1 J/kg) – on both the tumor and the healthy surrounding organs must be  
analyzed. 
The dose is measured over a three-dimensional grid of spatial cube units called voxels. The size of a voxel, and the granularity of the partition into voxels, are determined by the radiation beamlet size and the imaging resolution. Recent technology allows for voxel edge lengths of 2-3 mm. 
The dose at each voxel is generally assumed to be linearly related to the chosen beamlet intensities.  
In general, this is the theme of radiotherapy treatment planning (RTP), which may prescribe the treatment over both time and space. The particular optimization problem for setting up the beamlet intensities is known as fluence map optimization (FMO).  
A variety of formulations have been proposed for this problem. Typically, such formulations require that the tumor voxels are sufficiently irradiated,   
while healthy tissue voxels are spared from receiving 
an unhealthy dose.  
The formulations differ in the way that these requirements are handled.  Additional clinical requirements can also be factored into the model. 
Some formulations aim to maximize the average tumor voxel dose~\citep{PreciadoWalters2006}, while constraining the dose received by healthy organs. Other formulations minimize a measure of the dose received by healthy tissue voxels, or by all voxels, subject to ensuring that target (i.e., tumor) voxels receive at least the prescribed dose~\citep{Romeijn2006}. 

Our approach is to maximize the tumor's ``weakest link'': the voxels receiving the minimum dose of all tumor voxels. This is a widely accepted objective for evaluating radiotherapy planning~\shimritr{(\citealp{deasy1997multiple,yan1997adaptive,ICRU2010}, and \citealp{PreciadoWalters2004})}, subject to 
constraints on the healthy organ dose and on tumor dose homogeneity. {In particular, the objective of maximizing the minimum dose objective is argued to better support biological optimization, as ``a conservative surrogate for biological effect'', compared with minimization of deviations from prescribed doses~\citep{deasy1997multiple}. The importance of combining this objective with homogeneity constraints is outlined by~\cite{yan1997adaptive,Allen2012}, \shimritr{and} \cite{Khairi2021}. 
}

Advances in imaging have exposed uncertainties in the effect of the prescribed radiation dosage. {It is now known that tumors are not homogeneous, but rather, the sensitivity of cells varies across the tumor and changes over time~\shimritr{(\citealp{bentzen2011,Saka2014,Titz2008}, and \citealp{Nohadani2017})}. 
In an effort to improve patient outcomes, there is a need for the model to account for the uncertain problem data. Modeling uncertainty in an optimization problem usually results in added complexity and scale. 
Uncertainty is involved in all radiotherapy treatment planning stages, starting with the target volume definition, but also in relation to the dose required to control the tumor, healthy tissue tolerances, and the actual dose delivered to the patient.
Uncertainty in radiotherapy planning has been extensively addressed in the literature, mainly in the context of geometric uncertainty that results from the patient's movement and from imaging inaccuracies; see, for example,~\cite{Bortfeld2008}, the recent survey by \cite{Unkelbach2018}, and the references therein. 

Approaches for dealing with uncertainty in FMO include stochastic programming, robust optimization and distributionally robust optimization. The stochastic approach assumes that the distribution of the uncertainty is known and its parameters can be reasonably estimated. 
In practice, estimating the exact distribution is difficult, and incorrectly estimating the distribution may lead to solutions that are not clinically viable. 
In contrast, the robust optimization methodology \citep{BenTal09} does not  
assume any knowledge of the distribution, but rather, defines a set that contains all possible uncertainty realizations that need to be hedged against. 
By making few or no assumptions 
about the distribution, the robust approach optimizes the treatment for the worst-case realization of the uncertainty. Motivated by motion uncertainty, \cite{Chu2006} propose a robust formulation with uncertain influence matrices. 
The distributionally robust approach combines the stochastic and robust approaches -- 
it protects against the worst-case distribution within a family of possible probability distributions of the uncertain parameters.  For example, \cite{Bortfeld2008}  
model motion uncertainty by defining families of distributions that correspond to possible motion patterns of patients.  \cite{unkelbach2007} consider a (discrete) uncertainty set for beamlet range uncertainties in proton therapy (IMPT). This uncertainty set is constructed so that beamlets of the same beam must reach the same range. In this work they consider the objective of minimizing a weighted sum of the deviations from a prescribed dose.  However, the model being solved may be overly conservative as the worst case is applied to each of the voxels separately, rather than to the objective. A similar scenario-based approach to account for range uncertainty and setup uncertainty in IMPT is also used in \cite{Pflugfelder2008}.

Another source of uncertainty, also mentioned in \cite{Unkelbach2018}, arises from the biological conditions of the patients, which the authors consider a new frontier of robust RTP optimization. These conditions may affect the radiosensitivity of the tumor to the dose. The resulting effective dose is referred to as the biologically-adjusted dose. 
{We note that this adjustment is different from the biological effective dose (BED) often modeled by the linear-quadratic (LQ) model \citep{jones2001use}, which aims to estimate the biological effectiveness in terms of cell-kill probability for 
a given physical dose. In contrast, the biologically adjusted dose corrects the physical dose to account for the impact of a biological condition on the effective dose at a particular voxel. For the case of oxygen levels (or the lack thereof also known as hypoxia), this is described by the oxygen enhancement ratio (OER), which is the ratio between doses required to obtain the same biological effect \shimritr{(\citealp{hill2015hypoxia} and \citealp{mcmahon2018linear})}, or its normalized variant known as the oxygen {modification} factor (OMF)~\citep{Titz2008}.
Information about a living tissue's (e.g., a tumor's) biological conditions is available through bio-markers 
and imaging data. Specifically, the tumor's oxygenation level, which strongly affects its radiosensitivity, is measured by imaging techniques such as fluoromisonidazole (FMISO)~\citep{toma2012}. 
Translating the imaging data into radiosensitivity values is a crude estimation that is subject to uncertainty. Further, the oxygenation level, and accordingly the radiosensitivity, may change between the time of the image scan and the commencement of treatment. 
In fact, biological uncertainty in radiation therapy, and the effect of oxygenation on radiosensitivity in particular, is an active area of research in the medical physics and other research communities~\shimritr{(\citealp{Titz2008,toma2012}, and \citealp{Shang2021})}.  
Biological conditions and radiosensitivities are considered in the context of FMO by~\cite{Saka2014} {but their proposed methods do not attempt to model and protect from the associated uncertainty. Robust static (e.g., cumulative) dose planning under biological uncertainty has been proposed by~\cite{Li2015} and \cite{Eikelder2020}. \cite{Eikelder2020} propose a tumor control probability (TCP) based model assuming voxel independence and simple box uncertainty for its parameters. Simple box uncertainty here implies conservative solutions where all voxels are assumed to be simultaneously insensitive. \cite{Li2015} propose a dose-based model with budgeted uncertainty (see~\cite{Bertsimas2004}) on the deviation of the voxels' prescribed doses from their nominal value. In this model, the conservatism is controlled by a single budget parameter that 
\noamr{controls} how many \shimritr{voxels  have  prescribed doses that (maximally) deviate from 
their nominal values.}. However, in this model the budget is allocated linearly to the change in the prescribed doses, so it might not accurately capture measurement errors whose effect on radiosensitivity may be highly nonlinear~\shimritr{\citep{Titz2008}}. Also, this model does not limit the differences in radiosensitivities among neighboring voxels, \shimritr{and thus may consider unrealistic scenarios where the dose in one voxel radiosensitivity is significantly lower than its neighboring voxels.}}

{Recent papers that consider radiosensitivity uncertainty in adaptive radiation therapy planning include~\citet{Nohadani2017,roy2022}, \shimritr{and} \cite{jeyakumar2023}. These papers assume a perscribed dose approach where the objective is to minimize the total dose administered while satisfying simple lower and upper dose bound constraints on target voxels and OARs, respectively. Consequently, the {biological uncertainty in this formulation appears only on the
right-hand side}. 
\cite{Nohadani2017} utilize simple box uncertainty sets 
 containing the measured radiosensitivities, and 
{model} the evolution of this uncertainty through time. In the current paper, we also formulate and solve a robust FMO that accounts for biological uncertainty at the voxel level where the main focus is to model the spatial connectivity or dependence of the voxel uncertainties. 
Both~\cite{roy2022} \shimritr{and} \cite{jeyakumar2023} propose models similar to~\cite{Nohadani2017}. \cite{roy2022} studies an extension of the model to a multistage adaptive problem, while~\cite{jeyakumar2023} consider adaptivity to uncertainty in  influence matrix  resulting in a semidefinite program. } 

{I}n contrast to previous work, our approach is to model and explicitly account for the {spatial} interdependence among individual voxel {radiosenstivity} uncertainties. Accounting for such interdependence is expected to 
improve 
over 
simpler robust approaches that tend to result in overly conservative solutions. {Spatial dependence has been considered in other applications; for example by estimating covariance matrices and applying these using ellipsoid uncertainty sets in transportation~\shimritr{\citep{Chassein2019}}, or general polyhedral uncertainty sets in energy~\shimritr{\citep{Lorca2014}}. These previous approaches may be less appropriate for biological uncertainty in radiotherapy because there may not be sufficient data to reliability estimate the correlations between voxel radiosensitivities. Also, applying both types of uncertainty sets to large-scale radiotherapy optimization problems, may be computationally intractable in practice. In radiotherapy, the scenario-based approaches to range and setup uncertainties in IMPT appear to resemble robust optimization with spatial uncertainty (for example \cite{unkelbach2007} \shimritr{and} \cite{Pflugfelder2008}
discussed above). 
However, the uncertainty applies to the beamlets rather than to voxels and it is addressed by scenario-based approaches, so the dependency is not explicitly modeled.} 
\shimrit{{In contrast}, the current \noam{paper} focuses on 
spatial uncertainty mode\noam{l}ling,  reformulation of
\noam{the corresponding robust FMO problem} and solution \noam{methods for solving this problem}. 
\noam{Dividing the administered dose into fractions and accounting for treatment sessions over time \noam{(also known as fractions)}} 
may 
{involve a more elaborate uncertainty model over both space and time} 
{that} is left for future work. 
}
 
To make our proposed model applicable to clinical settings, we also consider important dose-volume constraints. These constraints limit the percentage of healthy-organ voxels that can receive more than a specified radiation dose. These are hard combinatorial constraints, which are virtually impossible to handle exactly given the problem size. 
Hence, many solution approaches have been proposed to approximately solve such constraints in the literature~\shimritr{(\citealp{ferris2003,PreciadoWalters2006}, and \citealp{Romeijn2006})}. However, effectively 
solving models that incorporate such constraints remains a challenging problem, especially in the context of the large scale problems considered herein. 

 To summarize our current contribution, we  
 propose a novel FMO formulation that accounts for voxel radiosensitivity and its associated uncertainty, in both the objective and in the homogeneity constraints.  
 To this end, Section~\ref{sec:modeling} presents our novel spatially dependent uncertainty set with justification based on publicly available real biomarker (FMISO imaging) data. In Section~\ref{sec:LSRO}, we develop a scalable constraint generation algorithm for solving the resulting 
 robust formulation.  Section~\ref{sec:tuning_beta} presents a method for relaxing the dose-volume constraints -- a method that can be solved and tuned in order to determine a feasible solution to {the robust formulation when augmented with a 
 dose-volume constraint}. 
 Finally, Section~\ref{sec:computational} includes a numerical study that demonstrates the scalability of our method, shows how to personalize the treatment plan, and presents a study of the effectiveness of our method under different scenarios.

\section{Modeling Robust Radiotherapy Planning}\label{sec:modeling}
 We now present our model for robust radiotherapy planning with biological uncertainty. In Section~\ref{sec:nominal},  
the nominal model, which is the planning optimization model in the absence of uncertainty, is discussed.   In Section~\ref{sec:uncertainty}, the robust approach to hedging against uncertainty is presented. This approach includes the development of a novel biological uncertainty set and its incorporation into the nominal problem.

\subsection{Model Introduction - Nominal Model}\label{sec:nominal}

In the following, we use the notation $[n]=\{1,\ldots,n\}$, for any integer $n\geq 1$. Additionally, for a vector $a\in \R^n$ and index set  $I\subset [n]$, $a_I\in \R^{\card{I}}$ denotes the subvector of $a$ with coordinate set $I$. 
The set of all voxel indices is $[m]$ for some positive integer $m$, which is 
partitioned into the planning target volume (PTV) index set $T\subset [m]$, containing tumor voxel indices, and index sets corresponding to the $K$ organs at risk (OARs), $H_k\subset [m]$, for each $k\in [K]$, such that 
$[m]=T\cup \bigcup_{k=1}^K H_k$.  

FMO involves determination of the  $n$ beamlet intensities. For  $i\in [n]$,  $x_i$ is beamlet $i$'s  intensity decision variable. Given the intensity vector $\bx\in\R^n$ and an influence matrix $\bbd\in \R^{m\times n}$,  for each voxel $v\in  [m]$,  $d_v(x)=\sum_{i=1}^n D_{vi}x_i$ is \emph{the physical dose} applied to voxel $v$.

The objective is to maximize a concave function of the dose associated with the PTV voxels, $f(\bd_T)$, while constraining the OAR dose. For convenience, throughout most of the paper, we will consider a simple OAR
bound of the form  
$d_v(x)\leq \bar{d}_{k}$ for each $k\in[K]$ and $v\in H_k$, where $\bar{d}_k$ is a predefined upper bound on the dose for OAR $k\in[K]$. In Section~\ref{sec:tuning_beta}, we consider more elaborate dose-volume constraints.  
An important 
aspect of the problem that affects the definition of the function $f$, which is also the main focus of the model considered in the current paper, is that the effect of the dose on a given PTV voxel $v\in T$ depends on biological factors, such as the PTV's oxygenation level. This information is 
modeled by a radiosensitivity vector $\phi\in [0,1]^{\card T}$.   Accordingly, {\emph{the biologically adjusted dose}  (or simply \emph{adjusted dose})} of voxel $v$ is given by $\phi_v{d}_v$. For example, if considering the effect of oxygen levels then $\phi$ would be a vector of OMF values, as discussed in Section~\ref{sec:INTRO}. The radiosensitivity is usually computed based on biomarker  
and imaging data. Notably, 
$\phi$ is highly uncertain due to imaging and conversion  inaccuracies and  variation in the oxygen level over time. 
Yet, the radiotherapy planning goal is generally to administer as much {adjusted}  dose to the PTV voxels as possible,  
which is accomplished here by setting $f(\bd)=\min_{v\in T} \phi_vd_v(x)$. 

{We also consider homogeneity constraints that limit the dose range across the PTV \citep{PreciadoWalters2004}. 
This is intended to prevent the creation of ``hot spots'' in the tumor tissue, which cause damage to the healthy tissue in which the tumor is embedded.} 
Specifically, 
our homogeneity constraints are of the form $\mu f(\bd)-\phi_v{d}_v\geq 0$, for all $v\in T$, where $\mu>1$ is a given homogeneity parameter. 
Homogeneity has been traditionally applied to the physical dose~\shimritr{(\citealp{Allen2012,kataria2012}, and \citealp{Khairi2021})}. 
The possibility of applying it to the adjusted dose has been enabled by recent technological advances in estimating necessary biological information, specifically 
the radiosensitivity vector $\phi$. The main goal of the current paper is to address the challenge  
of modeling the uncertainty in $\phi$ and using this information to improve radiotherapy treatment planning. The optimization problem of determining the intensity of the beams given the radiosensitivity parameter $\bphi$ is formulated as 
\begin{subequations}\label{prob:nominal}
    \begin{align}
        &\underset{\substack{
        \bx\in\R^n_+,\ubar{d}\in\R}}{\text{maximum}} &&  \ubar{d}\\
        &\text{subject to}&&
        \phi_{v}d_v(x)\geq \ubar{d}, && v\in T\\
        &&& \mu \ubar{d}-\phi_{v}d_v(x)\geq 0,&& v\in T \label{constr:HOMOGEN}\\
        &&&d_v(x)\leq \bar{d}_{k},&& k\in [K],v\in H_k,\label{OARSIMPLECONS} 
    \end{align}
\end{subequations}
which will be referred to as \emph{the nominal formulation}.

Note that $\bphi$ is derived from imaging data through a series of crude transformations. Thus, it is potentially highly inaccurate. Ignoring this inaccuracy in practice may lead to poor clinical outcomes in terms of controlling the adjusted dose and {adjusted homogeneity}. The next section 
extends the model to address the uncertainty in $\bphi$. 

\subsection{Robust Optimization Model with Spatially {Bound}
Uncertainty Set}\label{sec:uncertainty}

We now present a modeling scheme to deal with the uncertainty in the parameter vector $\bphi$. 
The radiosensitivity uncertainty is caused by multiple factors, including imaging noise, biomarker conversion error, and variation over time.
Inferring a distribution of the radiosensitivity parameter based on limited patient-specific (personalized) data 
is nontrivial and prone to error. Moreover,  
maximizing the clinical outcome (in our model, maximizing the minimum PTV adjusted dose) while using a {high-dimensional multivariate} 
probability distribution to model the biological uncertainty,  
would result in chance-constrained optimization problem~\citep{luedtke2010integer}, which is notorious for being computationally hard.  
Instead, our approach to modeling uncertainty   
follows the \emph{robust optimization (RO)} methodology. In this modeling framework, the uncertain parameters are assumed to lie in a predetermined \emph{uncertainty set} denoted by $\mathcal{U}$. 
{ An RO problem formulation is designed to protect 
its solutions against all realizations in the uncertainty set.  
In particular,} an optimal solution $(\bx,\ubar{d})$ must remain feasible for every possible realization $\bphi\in\mathcal{U}$ while maximizing the worst-case (minimum) PTV adjusted dose. 

Formulating the robust counterpart (RC) of~\eqref{prob:nominal}, in which variable $\underbar d$ now depends on the uncertain $\phi$ subject to   uncertain homogeneity constraints~\eqref{constr:HOMOGEN}, would lead to an adjustable {RO} problem. Such problems are computationally intensive \citep{Ben-Tal04}, and although it may be possible to solve these adjustable formulations when the problem is moderately sized, we find it more effective to avoid the adjustable robust model by reformulating the homogeneity constraints according to the equivalence 
\begin{align}\label{eq:homogeneity} \mu\phi_u{d}_u- \phi_v{d}_v\geq 0, \text{ for all } v,u\in T && \Leftrightarrow && \mu f(\bd)-\phi_v{d}_v\geq 0 \text{, for all } v\in T,\end{align} at the cost of
increasing the number of constraints by a factor of $O(\card{T})$. 
The resulting robust problem is given by

\begin{subequations}\label{prob:general_robust}
    \begin{align}
        &\underset{\substack{
        \bx\in\R^n_+,\ubar{d}\in\R}}{\text{maximum}}&& \ubar{d}
        \\
        &\text{subject to}&&
        \min_{\bphi\in\mathcal{U}}\phi_{v}d_v(x)\geq \ubar{d}, && v\in T\label{constr:ROBUSTMIN}\\
        &&&\max_{\bphi\in \mathcal{U}}\left\{ \phi_{v}d_v(x)- \mu \phi_{u} d_u(x)\right\}\leq 0,&& u,v\in T\label{constr:GENROBUSTHOMOGEN}\\
        &&& d_v(x)
        \leq \bar{d}_{k}
        ,&& k\in [K],v\in H_k.
    \end{align}
\end{subequations}
Constructing the uncertainty set poses some challenges, in particular, trying to satisfy and balance the following set of requirements for $\mathcal{U}$:
\begin{enumerate}[label=(\roman*)]
    \item \label{REQ1} Personalized for the patient. 
    This requirement relies on patient specific information, such as imaging data acquired at the start of the treatment.
    \item \label{REQ2} 
    Computationally tractable. The nominal problem may already be computationally intensive given the large number of voxels. 
        The computational effort of solving the robust problem should not be excessive; in particular, it should not significantly increase the running time relative to that required to solve the 
        nominal problem.
    \item \label{REQ3} Not too conservative. 
    A goal of our 
    modeling approach 
    is to determine a set that captures the essence of the uncertainty, thereby providing ``reasonable'' protection, while not being too conservative, and thus tackling one of the major criticisms of RO.
\end{enumerate}
In the next section, we present a novel uncertainty set 
that models the spatial dependence between  
voxels 
while addressing these requirements.

\subsubsection{The Spatially {Bound} Uncertainty Set.}
Given a radiosensitivity vector $\widehat{\bphi}$ that is estimated based on a patient's  
imaging data at the start of the treatment, the  goal is to construct an  
uncertainty set that satisfies our three modeling requirements, described in Section~\ref{sec:uncertainty}. Perhaps the simplest uncertainty set that can be applied is a box (in particular, a hypercube) 
uncertainty set, which allows the radiosensitivity of each PTV voxel $v$ to deviate from its calculated value $\widehat{\phi}_v$ by at most $\delta>0$, where $\delta$ is a given parameter. That is,
$\mathcal{U}_{\text{B}}=\{\bphi\in \R^{|T|}: |\phi_{v}-\widehat{\phi}_v|\leq \delta, \; \forall v\in T\}$. 
This uncertainty set, used for example in \cite{Nohadani2017}, admits a straightforward robust formulation. However, if $\delta$ is chosen to be sufficiently large to capture the actual uncertainty in the tumor biological conditions, then the resulting set may be overly conservative, thereby 
violating one of our key modeling requirements (see Section~\ref{sec:uncertainty}). To restrict the uncertainty further, a key observation to be made is that voxels that are situated close to each other should have similar radiosensitivities. In Figure~\ref{fig:Patient1VoxelRadioSensitive}, both plots demonstrate that the radiosensitivity differences are small for neighboring voxels 
and increase with the distance. 

\begin{figure}
\begin{subfigure}[t]{0.45\textwidth}
\includegraphics[scale=0.38]{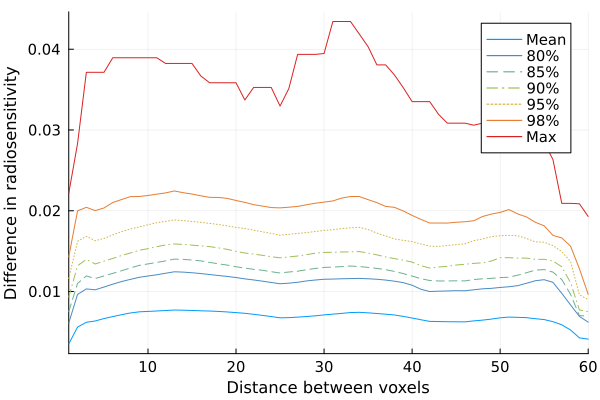}
\caption{
Pairwise radiosensitivity difference percentiles vs. pairwise distances.\label{fig:Patient1_VoxDistVSRSDist}}
\end{subfigure}
\begin{subfigure}[t]{0.45\textwidth}
\includegraphics[scale=0.38]{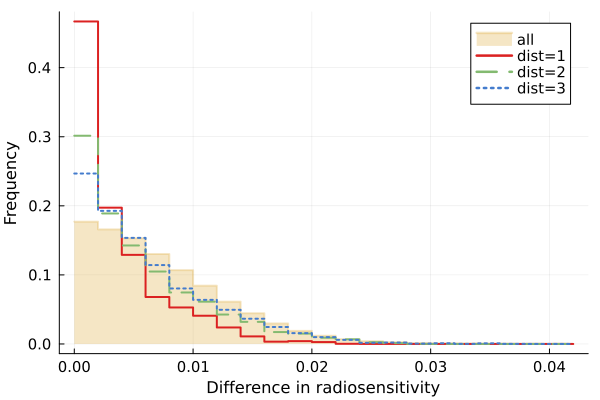}
\caption{
Histogram of the pairwise radiosensitivity differences for pairwise distances of 1, 2, 3, and all.
\label{fig:Patient1_RSDistHist}}
\end{subfigure}
\caption{Relationship between radiosensitivity difference and pairwise PTV voxel distance in the first-visit data of Patient 1 from the TCIA brain dataset~\citep{Clark2013}.
\label{fig:Patient1VoxelRadioSensitive}}
\end{figure}

Incorporating this observation into the uncertainty model leads to a \emph{spatially r{bound} (SB) uncertainty set}; in particular, we define 
\begin{equation}\label{eq:unc_SR}\USB=\{\bphi\in \R^{|T|}: |\phi_{v}-\widehat{\phi}_v|\leq \delta, \; |\phi_v-\phi_u|\leq \gamma_{vu},\; \forall v,u\in T\},\end{equation}
where $\gamma_{vu}$ are parameters of the uncertainty set that may depend on characteristics of the voxel pair $v$ and $u$. Specifically, the $\gamma_{uv}$ parameters are assumed to depend on the pairwise distances, and in particular, they are given by a metric-preserving function of the distance. 
\begin{assumption}\label{ass:gamma_struct}
For any $u,v\in T$,
 $\gamma_{uv}=\Gamma(\dist(u,v))$,
where $\dist(u,v)$ is the  
distance in voxels between $u$ and $v$, and
$\Gamma:\R\rightarrow [0,1]$ is a nondecreasing subadditive function satisfying $\Gamma(x)=0$ if and only if $x=0$. \end{assumption}
Defining $\gamma_{uv}$ as a function of the distance between $u$ and $v$ in this way captures the interdependence of the radiosensitivities of nearby voxels and limits the conservatism associated with the box uncertainty set. To the best of our knowledge, such spatially dependent uncertainty sets have not been previously investigated in the context of robust radiotherapy planning, nor it seems in the context of {RO} in general.

The $\gamma_{uv}$ parameters for $u,v\in T$ can be personalized to a particular patient’s data, thereby addressing requirement~\ref{REQ1}.  Requirement~\ref{REQ3} may be satisfied by imposing the spatial constraints associated with the parameters $\gamma_{uv}$, for $u,v\in T$, to augment the box constraints in the definition of the spatially {bound} uncertainty set. In the absence of these spatial constraints, the uncertainty set would reduce to the overly conservative box uncertainty set.
The following section addresses the question of whether optimal solutions of~\eqref{prob:general_robust}  
for uncertainty set $\USB$ can be computed efficiently in practice (requirement~\ref{REQ2}).

\subsubsection{Compactly Reformulating The Robust Problem}
In this section, we present an exact reformulation of \eqref{prob:general_robust} for the case where $\mathcal{U}=\USB$ with a polynomial number of constraints with respect to the size of the nominal formulation~\eqref{prob:nominal}.

The following result, which will be used in the proofs that  follow, establishes  that $\gamma_{uv}$ is a
metric.
\begin{lemma}[{\citealt[][Proposition~3.2]{Corazza1999}}]\label{lem:properties_of_gamma}
Suppose that Assumption~\ref{ass:gamma_struct} holds. Then, 
$\gamma_{\cdot}=\Gamma(d(\cdot)):T\times T\rightarrow [0,1]$ is a metric. 
\end{lemma}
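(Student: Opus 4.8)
The plan is to verify directly the four defining axioms of a metric for the map $\gamma_{uv}=\Gamma(\dist(u,v))$, leveraging the fact that $\dist$ is already a metric on the (finite) set of voxel indices and that $\Gamma$ is, in the terminology of \citet{Corazza1999}, a \emph{metric-preserving} function. Three of the axioms are immediate consequences of the pointwise properties of $\Gamma$ assumed in Assumption~\ref{ass:gamma_struct}, and only the triangle inequality requires combining two of these properties; I would treat the easy axioms first and isolate the triangle inequality as the single substantive step.

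For non-negativity, note that $\Gamma$ takes values in $[0,1]$, so $\gamma_{uv}\geq 0$ for all $u,v\in T$. For the identity of indiscernibles, the assumed equivalence $\Gamma(x)=0\iff x=0$ gives $\gamma_{uv}=\Gamma(\dist(u,v))=0$ if and only if $\dist(u,v)=0$, and the latter holds if and only if $u=v$ because $\dist$ is a metric. Symmetry of $\gamma$ follows at once from the symmetry of $\dist$, since $\gamma_{uv}=\Gamma(\dist(u,v))=\Gamma(\dist(v,u))=\gamma_{vu}$.

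The one step that uses more than a single property of $\Gamma$ is the triangle inequality, and this is where I would focus the argument. Fix $u,v,w\in T$. Starting from the triangle inequality for the underlying metric, $\dist(u,w)\leq \dist(u,v)+\dist(v,w)$, I would first apply monotonicity of $\Gamma$ to obtain $\Gamma(\dist(u,w))\leq \Gamma(\dist(u,v)+\dist(v,w))$, and then apply subadditivity of $\Gamma$ to bound the right-hand side by $\Gamma(\dist(u,v))+\Gamma(\dist(v,w))$. Chaining these two inequalities yields $\gamma_{uw}\leq \gamma_{uv}+\gamma_{vw}$, which completes the verification.

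The main (and essentially only) obstacle is a bookkeeping subtlety in the triangle inequality, not a conceptual one: subadditivity must be invoked on the two non-negative arguments $\dist(u,v)$ and $\dist(v,w)$, which is precisely the regime in which the hypothesis is meaningful, and the ordering of the two applications matters — monotonicity must be used before subadditivity, since it is the monotonicity of $\Gamma$ that lets us pass from the metric triangle inequality (an inequality \emph{inside} the argument of $\Gamma$) to an inequality between values of $\Gamma$. Everything else reduces to the stated pointwise properties of $\Gamma$ together with the fact that $\dist$ is already a metric.
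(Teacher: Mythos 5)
Your proof is correct and is exactly the standard argument for why a nondecreasing, subadditive function vanishing only at zero is metric-preserving; the paper itself supplies no proof, deferring entirely to the cited result (Corazza 1999, Proposition~3.2), whose proof proceeds just as you describe. The one substantive step — applying monotonicity first to pass the metric triangle inequality through $\Gamma$, then subadditivity to split the resulting argument — is handled correctly and in the right order.
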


For $S\subseteq \R^n$, denote the projection of set $S$ onto index set $I\subseteq [n]$ by $P_I(S)$, so that    
$P_I(S)=\set{v_I}{v\in S}$. 
For simplicity, for the singleton $I=\{i\}$, the projection will be denoted by $P_i(S)$.
The following is a straightforward observation that is used in 
reformulating~\eqref{prob:general_robust}. 
\begin{observation}\label{lem:proj_reform}
Let $\mathcal{U}\subseteq\R^n$ be a compact set. 
Then, for any {continuous function} $f:\R^{|I|}\rightarrow \R$,  
 $   \min_{\bu\in \mathcal{U}} f(\bu_I)=\min_{\bv\in P_I(\mathcal{U})} f(\bv)$.
\end{observation}

Problem \eqref{prob:general_robust} is now specialized and reformulated for the uncertainty set $\USB$. Henceforth, it is assumed that $\USB\neq\emptyset$. 
Starting with the the first set of constraints~\eqref{constr:ROBUSTMIN}, and using Observation~\ref{lem:proj_reform}, for each $v\in T$, the inner minimization in this constraint is given by
\begin{equation}\label{eq:const1}
    \min_{\bphi\in\USB}\phi_{v}d_v(x)=\min_{\phi_v\in P_v(\USB)}\phi_vd_v(x).
\end{equation}
Since matrix $\bbd$ and vector $\bx$ are both nonnegative, 
the solution of \eqref{eq:const1} 
is given by the minimal value of $\phi_v$. For convenience, in the following, for each $u\in T$, define $\ubar{\phi}^0_u=\max\{0,\widehat{\phi}_u-\delta\}$, $\bar{\phi}^0_u=\min\{1,\widehat{\phi}_u+\delta\}$, $\ubar\phi_u= \min_{\bphi\in \USB}\phi_u$, and $\bar\phi_u=\max_{\bphi\in \USB}\phi_u$.  
The following proposition characterizes the   
one-dimensional projection of $\USB$.

\begin{figure}[h!]
\centering
\begin{tikzpicture}[scale=0.3]
		\draw[help lines, color=gray!30, dashed] (-1,-2) grid (30,11);
		\draw[->,ultra thick] (-1,0)--(30,0) node[right]{};
		\draw[->,ultra thick] (0,-1)--(0,11) node[above]{$\phi$};
		\node [myblue1] at (5,5){\textbullet};
		{\node [] at (6,5) {$\hat{\phi}_v$};}
          \node [black] at (5,0) {\textbar};
        \node [black] at (5,-1) {$v$};
		\node [myblue1] at (15,3.5){\textbullet};
          \node [black] at (15,0) {\textbar};
        \node [black] at (15,-1) {$u$};
		{\node [] at (16,3.5) {$\hat{\phi}_u$};}
		\node [myblue1] at (25,7){\textbullet};
        \node [black] at (25,0) {\textbar};
        \node [black] at (25,-1) {$w$};
		{\node [] at (26,7) {$\hat{\phi}_w$};}
		\draw[|-|,thick,myblue1] (5,7)--(5,3);
		\draw[|-|,thick,myblue1] (15,5.5)--(15,1.5);
		\draw[|-|,thick,myblue1] (25,9)--(25,5);
	
    \node[myblue1] at (6.2,7) {$\bar{\phi}^0_v$};
    \node[myblue1] at (6.2,3) {$\ubar{\phi}^0_v$};
    \node[myblue1] at (16.2,5.5) {$\bar{\phi}^0_u$};
    \node[myblue1] at (16.2,1.5) {$\ubar{\phi}^0_u$};
    \node[myblue1] at (26.2,9) {$\bar{\phi}^0_w$};
    \node[myblue1] at (26.2,5) {$\ubar{\phi}^0_w$};
    \draw[|-|,thick,deepred] (15,5.5)--(15,6.5);
		\draw[|-|,thick,deepred] (15,0.5)--(15,1.5);
		\draw[|-|,thick,deepred] (25,9)--(25,10.7);
		\draw[|-|,thick,deepred] (25,3.3)--(25,5);

    \node[deepred] at (13,6) {$+\bar{\gamma}_{uv}$};
    \node[deepred] at (13,1.5) {$-\bar{\gamma}_{uv}$};
    \node[deepred] at (23,9.5) {$+\bar{\gamma}_{wv}$};
    \node[deepred] at (23,4.5) {$-\bar{\gamma}_{wv}$};
    \draw[deepgreen,thick,dashed] (15,6.5)--(5,6.5);
		\draw[deepgreen,thick,dashed] (25,10.7)--(5,10.7);
		\draw[deepgreen,thick,dashed] (15,0.5)--(5,0.5);
		\draw[deepgreen,thick,dashed] (25,3.3)--(5,3.3);	
		\draw[|-|,ultra thick,deepgreen] (5,3.3)--(5,6.5);
		\node [deepgreen] at (3.7,6.5) {$\bar{\phi}_v$};
		\node [deepgreen] at (3.7,3.3) {$\ubar{\phi}_v$};
		\end{tikzpicture}
  \caption{{An illustration of the one-dimensional projection result of Proposition~\ref{prop:one_dim_proj}.} {Assuming only three voxels, $v, u,$ and $w$, the bounds on the radiosenstivity of voxel $v$ are determined by tightening its original interval ${[{\ubar{\phi}}^0_v,\bar{\phi}^0_v]}$ by the intersection with
  $[\ubar{\phi}^0_u - \gamma_{uv},\bar{\phi}^0_u + \gamma_{uv}]$ 
  and $[\ubar{\phi}^0_w-\gamma_{wv},\bar{\phi}^0_w+\gamma_{wv}]$, imposed by voxels $u$ and $w$, respectively.
  }\label{fig:onedimproj}}
\end{figure}
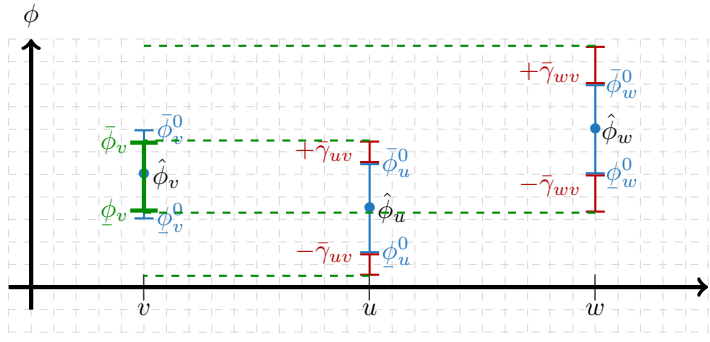 

\begin{proposition}\label{prop:one_dim_proj} 
For every $v\in T$: 
\begin{enumerate}[label=(\roman*)]
\item\label{prop:one_dim_proj_part0} \mbox{$P_{v}(\USB)=[\ubar\phi_v,\bar\phi_v]$}.
\item\label{prop:one_dim_proj_part1} 
    $\ubar\phi_v\geq \max_{u\in T\setminus\{v\}}\{\ubar\phi_u- \gamma_{uv}\}$ and  
    $\bar\phi_v \leq \min_{u\in T\setminus\{v\}}\{\bar \phi_u+\gamma_{uv}\}$.
\item\label{prop:one_dim_proj_part2} Suppose that Assumption~\ref{ass:gamma_struct} holds. Then,   
\begin{align}\ubar\phi_v=\max_{u\in T}\{\ubar\phi^0_u- \gamma_{uv}\} && \text{ and } &&  \bar\phi_v=\min_{u\in T}\{\bar\phi^0_u+\gamma_{vu}\}\label{eq:ONEDIMPROJDEF}.   
\end{align}
\end{enumerate}
\end{proposition}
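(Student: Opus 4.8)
The plan is to treat the three parts in order, using convexity and compactness for the first two and a Lipschitz-extension construction for the third.

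For part~\ref{prop:one_dim_proj_part0} I would first observe that $\USB$ is a polyhedron: each constraint $|\phi_v-\widehat\phi_v|\le\delta$ and $|\phi_v-\phi_u|\le\gamma_{vu}$ splits into two linear inequalities, so $\USB$ is a finite intersection of halfspaces, hence closed and convex, and the box constraints make it bounded, hence compact (and nonempty by assumption). The projection $P_v(\USB)=\{\phi_v:\bphi\in\USB\}$ is then the image of a compact convex set under the linear map $\bphi\mapsto\phi_v$, so it is a compact interval whose endpoints are $\min_{\bphi\in\USB}\phi_v=\ubar\phi_v$ and $\max_{\bphi\in\USB}\phi_v=\bar\phi_v$ by definition, giving $P_v(\USB)=[\ubar\phi_v,\bar\phi_v]$. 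For part~\ref{prop:one_dim_proj_part1} I would take a minimizer $\bphi^\ast\in\USB$ with $\phi^\ast_v=\ubar\phi_v$ (it exists by compactness); for every $u\ne v$ the pairwise constraint gives $\phi^\ast_v\ge\phi^\ast_u-\gamma_{uv}\ge\ubar\phi_u-\gamma_{uv}$, where the last step uses $\phi^\ast_u\ge\ubar\phi_u$, and maximizing over $u$ yields the stated lower bound; the symmetric argument on a maximizer gives the upper bound. This part needs only the pairwise constraints, not Assumption~\ref{ass:gamma_struct}.

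Part~\ref{prop:one_dim_proj_part2} is where Assumption~\ref{ass:gamma_struct}, through the metric property of Lemma~\ref{lem:properties_of_gamma}, enters. Writing $L_v:=\max_{u\in T}\{\ubar\phi^0_u-\gamma_{uv}\}$, the inequality $\ubar\phi_v\ge L_v$ is the easy direction: for any $\bphi\in\USB$ and any $u$, the box and pairwise constraints give $\phi_v\ge\phi_u-\gamma_{uv}\ge\ubar\phi^0_u-\gamma_{uv}$, so $\phi_v\ge L_v$ for every feasible $\bphi$. The real content is the reverse inequality $\ubar\phi_v\le L_v$, which requires exhibiting a single feasible point attaining $\phi_v=L_v$. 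The main obstacle is that the naive candidate $\phi_w=\max\{\ubar\phi^0_w,\,L_v-\gamma_{wv}\}$ need not be feasible, since it couples each voxel only to $v$ and can violate a pairwise constraint between two \emph{other} voxels.

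The fix I would use is the lower Lipschitz (McShane) extension: define $\psi_w:=\max_{u\in T}\{\ubar\phi^0_u-\gamma_{uw}\}$ for all $w\in T$, so that $\psi_v=L_v$, and then verify $\psi\in\USB$ in three steps. First, $\psi_w\ge\ubar\phi^0_w$ by taking $u=w$ (recall $\gamma_{ww}=\Gamma(0)=0$). Second, $\psi$ is $\gamma$-Lipschitz: the triangle inequality $\gamma_{uw'}\le\gamma_{uw}+\gamma_{ww'}$ from Lemma~\ref{lem:properties_of_gamma} gives $\ubar\phi^0_u-\gamma_{uw}\le(\ubar\phi^0_u-\gamma_{uw'})+\gamma_{ww'}$, and maximizing over $u$ yields $\psi_w\le\psi_{w'}+\gamma_{ww'}$, hence $|\psi_w-\psi_{w'}|\le\gamma_{ww'}$ by symmetry. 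Third, $\psi_w\le\bar\phi^0_w$, i.e. $\ubar\phi^0_u-\gamma_{uw}\le\bar\phi^0_w$ for all $u,w$; this consistency inequality I would derive from nonemptiness of $\USB$, since any $\bphi\in\USB$ satisfies $\ubar\phi^0_u-\bar\phi^0_w\le\phi_u-\phi_w\le|\phi_u-\phi_w|\le\gamma_{uw}$. Thus $\psi$ obeys $\ubar\phi^0_w\le\psi_w\le\bar\phi^0_w$ and all pairwise constraints, so $\psi\in\USB$ and $\ubar\phi_v\le\psi_v=L_v$, completing $\ubar\phi_v=L_v$. The formula for $\bar\phi_v$ follows by the dual upper-extension argument, setting $\chi_w:=\min_{u\in T}\{\bar\phi^0_u+\gamma_{uw}\}$ and checking $\chi\in\USB$ the same way, with the same consistency inequality (with $u,w$ swapped) used to establish $\chi_w\ge\ubar\phi^0_w$.
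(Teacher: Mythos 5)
Your proof is correct, and while parts~(i) and~(ii) essentially match the paper's argument (the paper phrases~(ii) as a proof by contradiction, but the underlying inequality chain is the same), your treatment of part~(iii) takes a genuinely different route. The paper works from an optimal $\phi^*\in\argmax_{\phi\in\USB}\phi_v$ and traces a chain of binding constraints $\phi^*_{u_l}=\phi^*_{u_{l+1}}+\gamma_{u_lu_{l+1}}$ terminating at a box bound $\phi^*_{u_k}=\bar\phi^0_{u_k}$, then telescopes with the triangle inequality to identify $\bar\phi_v=\bar\phi^0_{u_k}+\gamma_{vu_k}$; this pins down which constraint is active at the LP optimum but implicitly relies on the chain being finite and acyclic, a point the paper does not belabor. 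You instead split the claim into the easy pointwise bound $\phi_v\ge\ubar\phi^0_u-\gamma_{uv}$ and a constructive converse via the McShane lower extension $\psi_w=\max_u\{\ubar\phi^0_u-\gamma_{uw}\}$, verifying feasibility of $\psi$ from the triangle inequality (Lipschitz property), $\gamma_{ww}=0$ (lower box bound), and nonemptiness of $\USB$ (the consistency inequality $\ubar\phi^0_u-\gamma_{uw}\le\bar\phi^0_w$, which is where the standing assumption $\USB\neq\emptyset$ enters). This sidesteps the chain-termination issue entirely and exhibits an explicit feasible point attaining the extremum; it is also the same extension device the paper itself deploys later in the proof of Proposition~\ref{prop:two_dim_proj}, so your argument arguably unifies the one- and two-dimensional projection proofs under a single construction, at the modest cost of not revealing which single voxel's box bound is binding at the optimum.
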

\begin{proof}
\underline{Part \ref{prop:one_dim_proj_part0}:}
 $P_{v}(\USB)=[\ubar\phi_v,\bar\phi_v]$ 
follows from that fact that $\USB$ is a convex and compact set 
and from Observation~\ref{lem:proj_reform}, which together imply that the projection of $\USB$ onto $v$ is a closed interval  $[\ubar{\phi}_v,\bar{\phi}_v]$. 

\underline{Part \ref{prop:one_dim_proj_part1}:} 
For the sake of contradiction, suppose  that $\ubar\phi_v<\ubar\phi_u-\gamma_{uv}$ for some $u\in T\setminus\{v\}$, and let $\phi^*$ be a minimizer (vector) that attains $\ubar\phi_v$. Then, by optimality of $\phi^*_v=\ubar\phi_v$,
$\phi^*_v-\phi^*_u \leq \ubar\phi_v - \ubar\phi_u < -\gamma_{uv}$, 
thereby contradicting the feasibility of $\phi^*$. The proof of the upper bound on $\bar\phi_v$ is similar. 

\underline{Part \ref{prop:one_dim_proj_part2}:} 
Let $\phi^*\in\argmax_{\phi\in\USB}\{\phi_v\}$, so that $\bar{\phi}_v=\phi^*_v$. 
First, observe that by the optimality of $\phi^*_v$, there is at least one binding constraint, $\bar{\phi}_v\leq \bar\phi^0_v$ or $\bar{\phi}_v-\phi^*_{u_1}\leq \gamma_{vu_1}$, for some $u_1\in T$. Thus, $\bar{\phi}_v=\min\{\bar\phi^0_v,\phi^{*}_{u_1}+\gamma_{v{u_1}}\}$ for some $u_1\neq v$. If $\bar{\phi}_v=\bar\phi^0_v$ then $\bar{\phi}_v=\bar\phi^0_v+\gamma_{vv}$ from Lemma~\ref{lem:properties_of_gamma}, 
and the claim follows. 
Otherwise, $\bar{\phi}_v=\phi^{*}_{u_1}+\gamma_{v{u_1}}$.
Applying this argument repeatedly to components of $\phi^*$, it follows that there exist $u_2,\ldots,u_k\in T$, for some $k\leq \card{T-1}$, satisfying \begin{align}\phi^{*}_{u_l}=\phi^{*}_{u_{l+1}}+\gamma_{u_lu_{l+1}}, && \text{ for } &&  l=1\ldots, k-1, && \text{ and } && \phi^{*}_{u_k}=\bar{\phi}^0_{u_k}.\label{eq:PATHDEF}\end{align}
Lemma~\ref{lem:properties_of_gamma} (triangle inequality) implies that 
    $\bar{\phi}_v  
    \bar\phi^0_{u_k} +\sum_{l=1}^{k-1} \gamma_{u_lu_{l+1}}+\gamma_{vu_1}\geq  \bar\phi^0_{u_k}+\gamma_{vu_k}$. 
Next, from the feasibility of $\phi^*$ and~\eqref{eq:PATHDEF}, 
$\bar{\phi}_v= \phi^*_v\leq \phi^*_{u_k}+\gamma_{vu_k}= \bar\phi^0_{u_k}+\gamma_{vu_k}$, so it actually holds as an equality.  
A similar line of reasoning is used to prove the formula for $\ubar{\phi}_v$.
\end{proof}

The one-dimensional projection established by Proposition~\ref{prop:one_dim_proj} is graphically depicted by Figure~\ref{fig:onedimproj}.
Note that following Proposition~\ref{prop:one_dim_proj}, $\USB$ can be alternatively written as 
\[\USB=\{\bphi\in \R^{|T|}: \ubar\phi_v\leq \phi_{v}\leq \bar\phi_v,\; \forall v\in T, \;\;\quad  |\phi_v-\phi_u|\leq \gamma_{vu},\; \forall v,u\in T\}.\]{Two dimensional examples of $\USB$ are graphically illustrated in Figure~\ref{fig:uncertaintyset}. The two-dimensional case will bear a special importance compactly reformulating the robust problem~\eqref{prob:general_robust}. }
Next, we reformulate the second set of constraints~\eqref{constr:GENROBUSTHOMOGEN}. Again, due to Observation~\ref{lem:proj_reform}, for each $v,u\in T$, the inner maximization is 
\begin{equation}\label{eq:reform_second_const}
    \max_{\bphi\in\USB}\left\{ \phi_{v}d_v(x)- \mu \phi_{u} d_u(x)\right\}=\max_{(\phi_v,\phi_u)\in P_{\{v,u\}}(\USB)} \left\{\phi_{v}d_v(x) - \mu \phi_{u} d_u(x)\right\}.
\end{equation}
The next proposition establishes that the projection onto a pair of indices $u,v\in T$,  $P_{\{v,u\}}(\USB)$, can be expressed 
as a polyhedral set defined by  
six constraints.

\begin{figure}[b]
\begin{subfigure}
[t]{0.45\textwidth}
\centering
\begin{tikzpicture}[scale=3]
\draw[->] (0,0) -- (1,0) node[anchor=north] {$\phi_{v}$};
\draw[->] (0,0) -- (0,1) node[anchor=east] {$\phi_{u}$};
\foreach \x in {0,0.2,0.4,0.6,0.8}{
	\draw (\x,0) node[anchor=north] {\x};};
\foreach \y in {0,0.2,0.4,0.6,0.8}{
	\draw (0,\y) node[anchor=east] {\y};};
\draw[dotted] (0.3,-0.1) -- (0.3,1);
\draw (0.3,-0.1) node[anchor=north] {$\ubar{\phi}_{v}$};
\draw[dotted] (0.7,-0.1) -- (0.7,1);
\draw (0.7,-0.1) node[anchor=north] {$\bar{\phi}_{v}$};
\draw[dotted] (-0.15,0.1) -- (1,0.1);
\draw (-0.15,0.1) node[anchor=east] {$\ubar{\phi}_{u}$};
\draw[dotted] (-0.15,0.5) -- (1,0.5);
\draw (-0.15,0.5) node[anchor=east] {$\bar{\phi}_{u}$};
\fill [fill=red,fill opacity=0.1] (0.3,0.1)--(.3,.5)--(.7,.5)--(.7,.1)--(0.3,0.1);
\draw[green, thick] (0.3,0.2)--(.6,.5)--(.4,.5)--(.3,.4)--(0.3,0.2);
\fill [fill=blue,fill opacity=0.1] (0,0)--(.1,0)--(1,.9)--(1,1)--(0.9,1)--(0,.1);
\draw[dashed] (0,0) -- (1,1);
\draw [decorate,decoration={brace,amplitude=6pt},xshift=-0.2pt,yshift=0pt]
(0.3,0.3) -- (0.3,0.4) node [black,midway,xshift=-15pt] {\footnotesize
	$\gamma_{vu}$};
\draw [decorate,decoration={brace,amplitude=6pt},xshift=0pt,yshift=0.2pt]
(0.4,0.5) -- (0.5,0.5) node [black,midway,yshift=10pt] {\footnotesize
	$\gamma_{vu}$};
\filldraw[red] (0.3,0.2) circle (0.5pt);
\filldraw[red] (0.6,0.5) circle (0.5pt);
\end{tikzpicture}%
\end{subfigure}
\begin{subfigure}
[t]{0.45\textwidth}
\centering
\begin{tikzpicture}[scale=3]
\draw[->] (0,0) -- (1,0) node[anchor=north] {$\phi_{v}$};
\draw[->] (0,0) -- (0,1) node[anchor=east] {$\phi_{u}$};
\foreach \x in {0,0.2,0.4,0.6,0.8}{
	\draw (\x,0) node[anchor=north] {\x};};
\foreach \y in {0,0.2,0.4,0.6,0.8}{
	\draw (0,\y) node[anchor=east] {\y};};
 
\draw[dotted] (0.3,-0.1) -- (0.3,1);
\draw (0.3,-0.1) node[anchor=north] {$\ubar{\phi}_{v}$};
\draw[dotted] (0.7,-0.1) -- (0.7,1);
\draw (0.7,-0.1) node[anchor=north] {$\bar{\phi}_{v}$};
\draw[dotted] (-0.15,0.3) -- (1,0.3);
\draw (-0.15,0.3) node[anchor=east] {$\ubar{\phi}_{u}$};
\draw[dotted] (-0.15,0.7) -- (1,0.7);
\draw (-0.15,0.7) node[anchor=east] {$\bar{\phi}_{u}$};
\fill [fill=red,fill opacity=0.1] (0.3,0.3)--(.3,.7)--(.7,.7)--(.7,.3)--(0.3,0.3);
\draw[green, thick] (0.4,0.3)--(.7,.6)--(.7,.7)--(.6,.7)--(0.3,0.4)--(0.3,0.3)--(0.4,0.3);
\fill [fill=blue,fill opacity=0.1] (0,0)--(.1,0)--(1,.9)--(1,1)--(0.9,1)--(0,.1);
\draw[dashed] (0,0) -- (1,1);
\draw [decorate,decoration={brace,amplitude=6pt},xshift=-0.2pt,yshift=0pt]
(0.3,0.3) -- (0.3,0.4) node [black,midway,xshift=-14pt] {\footnotesize
	$\gamma_{vu}$};
\draw [decorate,decoration={brace,amplitude=6pt},xshift=0pt,yshift=0.2pt]
(0.6,0.7) -- (0.7,0.7) node [black,midway,yshift=10pt] {\footnotesize
	$\gamma_{vu}$};
\filldraw[red] (0.4,0.3) circle (0.5pt);
\filldraw[red] (0.7,0.6) circle (0.5pt);
\end{tikzpicture}%
\end{subfigure}
\caption{{An illustration of two different two-dimensional projections of $\USB$ for voxels $v$ and $u$. 
The shaded square depicts the $\delta$-bounds on the deviation from the measured $\widehat{\phi}$. The gray $45^0$ band depicts the $\gamma_{vu}$ bound on the radiosensitivity difference between the voxels. The green polygons are 
  the projected uncertainty sets. The red points indicate the vertices that potentially maximize~\eqref{eq:reform_second_const}. }\label{fig:uncertaintyset}}
\end{figure}
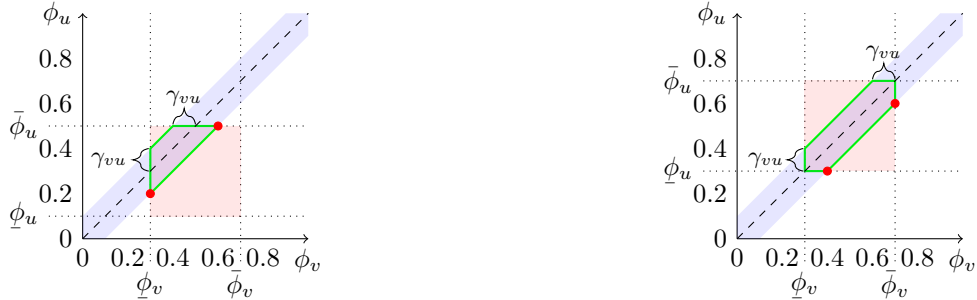

\begin{proposition}\label{prop:two_dim_proj}
Suppose some $v,u\in T$, $\ubar{\bphi}\in\R^{|T|}$, and $\bar{\bphi}\in\R^{|T|}$ 
whose components are given by~\eqref{eq:ONEDIMPROJDEF}, and that Assumption~\ref{ass:gamma_struct} holds. Then,  
$$P_{\{v,u\}}(\USB)=\mathcal U_{u,v}\equiv\{(\phi_v,\phi_u): \quad \ubar{\phi}_v \leq  \phi_v\leq \bar{\phi}_v,\;\quad
\ubar{\phi}_u \leq \phi_u\leq \bar{\phi}_u,\;\quad
\abs{\phi_v-\phi_u}\leq \gamma_{vu}
\}.$$
\end{proposition}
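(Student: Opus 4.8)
The plan is to establish the two set inclusions $P_{\{v,u\}}(\USB)\subseteq \mathcal U_{u,v}$ and $\mathcal U_{u,v}\subseteq P_{\{v,u\}}(\USB)$ separately. The first inclusion is immediate: if $(\phi_v,\phi_u)$ is the projection of some $\bphi\in\USB$, then $\phi_v\in P_v(\USB)=[\ubar{\phi}_v,\bar{\phi}_v]$ and $\phi_u\in P_u(\USB)=[\ubar{\phi}_u,\bar{\phi}_u]$ by Proposition~\ref{prop:one_dim_proj}\ref{prop:one_dim_proj_part0}, while $\abs{\phi_v-\phi_u}\leq\gamma_{vu}$ holds directly because $\bphi$ satisfies the defining spatial constraints of $\USB$. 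Hence $(\phi_v,\phi_u)\in\mathcal U_{u,v}$.

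The reverse inclusion is the heart of the argument and requires a constructive extension: given an arbitrary $(\phi_v^\ast,\phi_u^\ast)\in\mathcal U_{u,v}$, I must exhibit a full vector $\bphi\in\USB$ whose $v$- and $u$-components equal $\phi_v^\ast$ and $\phi_u^\ast$. I would define, for every $w\in T$,
\[
\phi_w=\min\bigl\{\bar{\phi}_w,\; \phi_v^\ast+\gamma_{vw},\; \phi_u^\ast+\gamma_{uw}\bigr\},
\]
a McShane-type extension of the two prescribed values, but with the crucial feature that the truncation uses the \emph{propagated} upper bound $\bar{\phi}_w=\min_{y\in T}\{\bar{\phi}^0_y+\gamma_{wy}\}$ from~\eqref{eq:ONEDIMPROJDEF}, rather than the raw box bound $\bar{\phi}^0_w$.

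The verification then proceeds in three steps. First, evaluating the formula at $w=v$ and $w=u$ and using $\gamma_{vv}=\gamma_{uu}=0$ (Assumption~\ref{ass:gamma_struct}) together with $\phi_v^\ast\leq\bar{\phi}_v$, $\phi_u^\ast\leq\bar{\phi}_u$, and $\abs{\phi_v^\ast-\phi_u^\ast}\leq\gamma_{vu}$ shows $\phi_v=\phi_v^\ast$ and $\phi_u=\phi_u^\ast$. Second, I would verify the spatial constraints $\abs{\phi_w-\phi_{w'}}\leq\gamma_{ww'}$: each of the three functions $w\mapsto\bar{\phi}_w$, $w\mapsto\phi_v^\ast+\gamma_{vw}$, and $w\mapsto\phi_u^\ast+\gamma_{uw}$ is $\gamma$-Lipschitz by the triangle inequality for the metric $\gamma$ (Lemma~\ref{lem:properties_of_gamma}), and since a pointwise minimum of $\gamma$-Lipschitz functions is again $\gamma$-Lipschitz, so is $\phi_w$. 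Third, I would verify the box constraints $\ubar{\phi}^0_w\leq\phi_w\leq\bar{\phi}^0_w$: the upper bound follows from $\phi_w\leq\bar{\phi}_w\leq\bar{\phi}^0_w$, while for the lower bound one checks that each of the three terms in the minimum is at least $\ubar{\phi}^0_w$, using $\bar{\phi}_w\geq\ubar{\phi}_w\geq\ubar{\phi}^0_w$ and, for the anchor terms, the lower-projection formula $\phi_v^\ast\geq\ubar{\phi}_v=\max_{y\in T}\{\ubar{\phi}^0_y-\gamma_{vy}\}\geq\ubar{\phi}^0_w-\gamma_{vw}$ (and symmetrically for $u$).

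The main obstacle is reconciling the spatial (Lipschitz) constraints with the box constraints simultaneously: the naive extension truncated at the raw bound $\bar{\phi}^0_w$ need not be $\gamma$-Lipschitz, since $w\mapsto\bar{\phi}^0_w$ can vary faster than the metric $\gamma$ permits, whereas the untruncated extension may violate the box. The device that resolves this is to truncate instead at the propagated bound $\bar{\phi}_w$, which is simultaneously $\gamma$-Lipschitz (being itself a minimum of $\gamma$-Lipschitz functions) and squeezed between the raw box bounds; this is precisely where Proposition~\ref{prop:one_dim_proj}\ref{prop:one_dim_proj_part2} does the decisive work.
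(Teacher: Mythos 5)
Your proof is correct and takes essentially the same approach as the paper: the forward inclusion via the one-dimensional projections, and the reverse inclusion via a McShane-type Lipschitz extension of the two prescribed values truncated by the propagated bounds from Proposition~\ref{prop:one_dim_proj}, with the triangle inequality of Lemma~\ref{lem:properties_of_gamma} doing the work. The only difference is that you use the min/upper-anchored form $\phi_w=\min\{\bar{\phi}_w,\,\phi_v^\ast+\gamma_{vw},\,\phi_u^\ast+\gamma_{uw}\}$ whereas the paper uses the mirror-image max/lower-anchored form $\tilde\phi_w=\max\{\tilde{\phi}_v-\gamma_{vw},\,\tilde{\phi}_u-\gamma_{uw},\,\ubar{\phi}_w\}$; the verification steps correspond term by term, with your ``min of $\gamma$-Lipschitz functions is $\gamma$-Lipschitz'' observation being a slightly cleaner packaging of the paper's case-by-case bounding of $\tilde\phi_w-\tilde\phi_{w'}$.
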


\begin{proof}
Let $u,v\in T$, and for convenience, define also  $\tilde{\mathcal{U}}_{u,v}=\{(\phi_v,\phi_u): -\gamma_{vu}\leq \phi_v-\phi_u\leq \gamma_{uv}\}$,  
to show that $P_{\{u,v\}}(\USB)\subseteq \mathcal{U}_{u,v}$. 
The contractive property of the projection onto a plane, as a convex set,  implies that $P_{\{u,v\}}(\USB)\subseteq 
P_{u}(\USB)\times P_{v}(\USB)$. Then, by Proposition~\ref{prop:one_dim_proj}-\ref{prop:one_dim_proj_part0} and the definition of $\USB$, 
it follows that $$P_{\{u,v\}}(\USB)\subseteq   
(P_{u}(\USB)\times P_{v}(\USB)) 
\cap  \tilde{\mathcal{U}}_{uv} = ([\ubar\phi_u,\bar\phi_u]\times [\ubar\phi_v,\bar\phi_v])\cap \tilde{\mathcal{U}}_{uv}=\mathcal U_{u,v}.$$

To show the reverse inclusion, 
let $(\tilde{\phi}_u,\tilde{\phi}_v)\in {\mathcal{U}}_{u,v}$. 
Consider extending this two dimensional vector to 
a $\card T$-dimensional vector in $\USB$ in order to prove that $(\tilde{\phi}_u,\tilde{\phi}_v)\in P_{\{u,v\}}(\USB)$.   
Now, consider some $w\in T\setminus\{u,v\}$ and define  
$\tilde\phi_w=\max\{\tilde{\phi}_v-\gamma_{vw},\tilde{\phi}_u-\gamma_{uw}, \ubar{\phi}_w\}$. This choice of $\tilde\phi_w$ satisfies the lower bounds imposed by $\tilde\phi_u,\tilde\phi_v$ and $\ubar\phi_w$ in the definition of~$\USB$. 
Also, to prove the corresponding upper bounds, first note that  
$$
\tilde\phi_w\leq \max\{\tilde{\phi}_v-\gamma_{vw},\tilde{\phi}_v+\gamma_{vu}-\gamma_{uw},\tilde{\phi}_v+\gamma_{vw}\}\leq \tilde{\phi}_v+\gamma_{vw},$$ 
where the first inequality is due to  
$(\tilde{\phi}_u,\tilde{\phi}_v)\in {\mathcal{U}}_{u,v}$ and $\tilde\phi_v\geq \ubar{\phi}_v\geq \ubar{\phi}_w-\gamma_{vw}$ (the latter inequality following from Proposition~\ref{prop:one_dim_proj}-\ref{prop:one_dim_proj_part1}) 
 and the second inequality is due to Assumption~\ref{ass:gamma_struct} and  
Lemma~\ref{lem:properties_of_gamma} (triangle inequality).
Similarly,
$
\tilde\phi_w\leq \max\{\tilde{\phi}_u+\gamma_{vu}-\gamma_{vw},\tilde{\phi}_u-\gamma_{uw},\tilde{\phi}_u+\gamma_{uw}\}\leq \tilde{\phi}_u+\gamma_{uw}$.
Finally, the fact that $(\tilde\phi_u,\tilde\phi_v)\in \mathcal U_{u,v}$, 
 together with the definitions of $\ubar\phi$ and $\bar\phi$, imply that 
$\tilde\phi_w=\max\{\tilde{\phi}_v-\gamma_{vw},\tilde{\phi}_u-\gamma_{uw},\ubar{\phi}_w\}\leq \max\{\bar{\phi}_v-\gamma_{vw},\bar{\phi}_u-\gamma_{uw},\ubar{\phi}_w\}\leq \bar{\phi}_w$. 
Next, consider (arbitrary) $w'\in T\setminus\{u,v,w\}$ in order to verify that $\tilde{\phi}_w$ and similarly defined~$\tilde\phi_{w'}$ together  satisfy\begin{equation}\label{eq:wRSDist}
-\gamma_{ww'}\leq \tilde{\phi}_{w}-\tilde{\phi}_{w'}\leq \gamma_{ww'}.
\end{equation}
Starting with the lower bound, 
\begin{align*}
\tilde{\phi}_{w}-\tilde{\phi}_{w'}&= \max\{\tilde{\phi}_{v}-\gamma_{vw},\tilde{\phi}_u-\gamma_{uw},\ubar{\phi}_{w}\}-\max\{\tilde{\phi}_v-\gamma_{vw'},\tilde{\phi}_u-\gamma_{uw'},\ubar{\phi}_{ w'}\}\\
&\geq 
\min\{\gamma_{vw'}-\gamma_{vw},\gamma_{uw'}-\gamma_{uw},\ubar{\phi}_{w}-\ubar{\phi}_{w'}\}.
\end{align*}
By definition of {$\ubar{\phi}$}, 
$\ubar{\phi}_{w}-\ubar{\phi}_{w'}\geq -\gamma_{ww'}$. 
By Lemma~\ref{lem:properties_of_gamma} (triangle inquality) 
we also have that $\gamma_{vw'}-\gamma_{vw},\gamma_{uw'}-\gamma_{uw}\geq -\gamma_{ww'}$. 
Proving the upper bound  
is analogous {(by interchanging the roles of $w$ and $w'$ and since $\gamma_{ww'}=\gamma_{w'w}$)}. 
Thus,~\eqref{eq:wRSDist} holds for all $w,w'\in T$.  
Finally, we have established that $\tilde\phi\in \USB$, and accordingly, it follows that 
${\mathcal{U}}_{u,v}\subseteq P_{\{u,v\}}(\USB)$.
\end{proof}

\bigskip

Following the results of Propositions~
\ref{prop:one_dim_proj}-\ref{prop:two_dim_proj}, formulation
 \eqref{prob:general_robust}  
 with $\mathcal{U}=\USB$ can now be compactly reformulated with a finite number of constraints. In particular, in this reformulation, no variables are added and the number of constraints of this formulation is $2|T|$ times that of the nominal problem \eqref{prob:nominal}, thereby remaining polynomial in the size of the nominal formulation. 
Consider the formulation
    \begin{subequations}\label{prob:SR_robust}
    \begin{align}
        &\underset{\substack{
        \bx\in\R^n_+,\ubar{d}\in\R}}{\text{maximum}}\hskip-10pt&& \ubar{d}\\
        &\text{subject to}&&
        \ubar\phi_{v} d_v(x)\geq \ubar{d}, & v\in T\label{constr:ROB_MIN}\\
        &&& \bar\phi_{v}d_v(x)- \mu \max\{\bar\phi_{v}-\gamma_{vu},\ubar\phi_{u}\} d_u(x)\leq 0,\hskip-10pt& (u,v)\in T^2:u\neq v\label{constr:ROB_HOMOGEN1}\\
        &&& \min\{\ubar\phi_{u}+\gamma_{vu},\bar{\phi}_v\}d_v(x) - \mu  \ubar\phi_u d_u(x)\leq 0,& (u,v)\in T^2:u\neq v\label{constr:ROB_HOMOGEN2}\\
        &&&\ d_{v}(x) 
        \leq \bar{d}_{k},& k\in [K],v\in H_k.\label{constr:OAR_UB}
    \end{align}
    \end{subequations}
The following theorem establishes the correctness of this formulation, and specifically, demonstrates that its optimal solution set coincides with that of~\eqref{prob:general_robust}.

\begin{theorem}
    Suppose that Assumption~\ref{ass:gamma_struct} holds and let $\mathcal{U}=\USB$. Then, the optimal solution sets of \eqref{prob:general_robust} and~\eqref{prob:SR_robust} coincide.
\end{theorem}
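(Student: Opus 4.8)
The plan is to prove the stronger statement that the feasible regions of \eqref{prob:general_robust} and \eqref{prob:SR_robust} coincide. Since both problems maximize the same objective $\ubar{d}$ over the same variables $(\bx,\ubar d)\in\R^n_+\times\R$, equality of feasible regions immediately yields equality of the optimal solution sets. So I would fix an arbitrary $(\bx,\ubar d)$ and show that each robust constraint of \eqref{prob:general_robust} is equivalent to its counterpart(s) in \eqref{prob:SR_robust}. The minimum-dose constraints need no further work: equation \eqref{eq:const1}, together with $\bbd,\bx\ge 0$, already identifies the inner minimum in \eqref{constr:ROBUSTMIN} as $\ubar\phi_v d_v(x)$, which is exactly \eqref{constr:ROB_MIN}. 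The diagonal case $u=v$ of \eqref{constr:GENROBUSTHOMOGEN} is vacuous, since $\mu>1$ and $\phi_v d_v(x)\ge 0$ force $(1-\mu)\phi_v d_v(x)\le 0$; hence restricting to $u\ne v$ in \eqref{prob:SR_robust} loses nothing.

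The core of the theorem is the homogeneity constraint for a fixed ordered pair $(u,v)$ with $u\ne v$. By Observation~\ref{lem:proj_reform} and Proposition~\ref{prop:two_dim_proj}, the inner maximization \eqref{eq:reform_second_const} becomes the maximization of the linear function $g(\phi_v,\phi_u)=\phi_v d_v(x)-\mu\phi_u d_u(x)$ over the polygon $\mathcal U_{u,v}$. I would establish the identity
\[\max_{(\phi_v,\phi_u)\in\mathcal U_{u,v}} g(\phi_v,\phi_u)=\max\{L_1,L_2\},\]
where $L_1$ and $L_2$ are the left-hand sides of \eqref{constr:ROB_HOMOGEN1} and \eqref{constr:ROB_HOMOGEN2}. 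Granting this, $\max\{L_1,L_2\}\le 0$ holds if and only if both $L_1\le 0$ and $L_2\le 0$, which is precisely the equivalence of \eqref{constr:GENROBUSTHOMOGEN} with the pair \eqref{constr:ROB_HOMOGEN1}--\eqref{constr:ROB_HOMOGEN2}. Combined with the first paragraph, this gives equality of feasible regions and completes the proof.

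To prove the identity, I would exploit that the coefficients $d_v(x)$ and $\mu d_u(x)$ are nonnegative (as $\bbd,\bx\ge 0$), so the gradient $(d_v(x),-\mu d_u(x))$ of $g$ points toward larger $\phi_v$ and smaller $\phi_u$. Inspecting outward normals of the six facets of $\mathcal U_{u,v}$ shows that only three can support the maximum: the right edge $\phi_v=\bar\phi_v$, the bottom edge $\phi_u=\ubar\phi_u$, and the diagonal $\phi_v-\phi_u=\gamma_{vu}$. These form a connected lower-right polyline whose breakpoints are exactly $V_1=(\bar\phi_v,\max\{\bar\phi_v-\gamma_{vu},\ubar\phi_u\})$ and $V_2=(\min\{\ubar\phi_u+\gamma_{vu},\bar\phi_v\},\ubar\phi_u)$, the two red vertices in Figure~\ref{fig:uncertaintyset}. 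Because $g$ is linear, increasing in $\phi_v$ and decreasing in $\phi_u$, its maximum over each of the three edges is attained at one of these two breakpoints, hence so is its maximum over the whole polygon; evaluating $g(V_1)=L_1$ and $g(V_2)=L_2$ closes the argument.

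Two points require care, and the second is the main obstacle. First, the $\max\{\cdot\}$/$\min\{\cdot\}$ expressions defining $V_1,V_2$ must be checked to give the correct breakpoints in both regimes: when the corner $(\bar\phi_v,\ubar\phi_u)$ is feasible (equivalently $\bar\phi_v-\ubar\phi_u\le\gamma_{vu}$) the points collapse to that corner, and when it is cut off they are the two distinct endpoints of the diagonal segment. Second, I must confirm that these endpoints genuinely lie on the diagonal facet rather than being clipped by the top edge $\phi_u\le\bar\phi_u$ or the left edge $\phi_v\ge\ubar\phi_v$; here I would invoke Proposition~\ref{prop:one_dim_proj}-\ref{prop:one_dim_proj_part1} applied to both orderings of the pair, together with $\gamma_{uv}=\gamma_{vu}$ from Lemma~\ref{lem:properties_of_gamma}, to obtain $\bar\phi_v-\gamma_{vu}\le\bar\phi_u$ and $\ubar\phi_v\le\ubar\phi_u+\gamma_{vu}$. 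This clean reduction of the full polygon optimum to just $V_1$ and $V_2$, uniformly across the case split, is the delicate step; the remaining verifications are routine bookkeeping.
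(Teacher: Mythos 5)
Your proposal is correct and follows essentially the same route as the paper: reduce the inner optimizations via Observation~\ref{lem:proj_reform} and Propositions~\ref{prop:one_dim_proj}--\ref{prop:two_dim_proj}, then observe that the linear objective of~\eqref{eq:reform_second_const}, having a positive $\phi_v$-coefficient and a negative $\phi_u$-coefficient, attains its maximum over the polygon $P_{\{u,v\}}(\USB)$ at one of the two points of $\mathcal U^*_{u,v}$, which yields exactly constraints~\eqref{constr:ROB_HOMOGEN1}--\eqref{constr:ROB_HOMOGEN2}. You are in fact somewhat more careful than the paper in verifying (via Proposition~\ref{prop:one_dim_proj}-\ref{prop:one_dim_proj_part1} and the symmetry of $\gamma$) that these two candidate points are genuine, unclipped vertices of the projected polygon, and in dispensing with the vacuous diagonal case $u=v$.
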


\begin{proof}
    The reformulation of 
    constraints~\eqref{constr:ROBUSTMIN} is a straightforward consequence of~\eqref{eq:const1} together 
    and Proposition~\ref{prop:one_dim_proj}.
   Constraints~\eqref{constr:GENROBUSTHOMOGEN} 
   are reformulated based on  \eqref{eq:reform_second_const} to a maximization over $P_{\{v,u\}}(\USB)$ in place of a maximization over $\USB$. Due to Assumption~\ref{ass:gamma_struct}, this set is given by the result of Proposition~\ref{prop:two_dim_proj}. Further, since the objective is linear, 
   and since, for each $u,v\in T$, $P_{\{u,v\}}(\USB)$ is a {bound} 
   polyhedral set, {then by the fundamental theorem of linear programming, there}  is an optimal $(\phi_{u},\phi_{v})$ 
   that is an extreme point of the set $P_{\{u,v\}}(\USB)$. 
   Moreover, 
{since $\phi_v$ and $\phi_u$ have, respectively, a positive and a negative coefficient in~\eqref{constr:GENROBUSTHOMOGEN}, this implies that the   points in $\ext(P_{\{u,v\}}(\USB))$ 
are dominated by the points in}
\begin{align*} \mathcal U^*_{u,v}\equiv \Big\{
(\max\{\bar{\phi}_v-\gamma_{vu},\ubar{\phi}_u\},\bar{\phi}_v),
(\ubar{\phi}_u,\min\{\ubar{\phi}_u+\gamma_{vu},\bar{\phi}_v\})\Big\}\subset\ext(P_{\{u,v\}}(\USB));\end{align*}
that is, for each pair $v,u\in T$, constraint~\eqref{constr:GENROBUSTHOMOGEN} is satisfied 
if and only if its left-hand side is nonpositive for these two points in $\mathcal U^*_{u,v}$.
\end{proof}
In Section~\ref{sec:LSRO}, we will address the computational aspect of solving~\eqref{prob:SR_robust}, specifically, how to deal with the potentially very large number of homogeneity constraints~\eqref{constr:ROB_HOMOGEN1} and~\eqref{constr:ROB_HOMOGEN2}.

\section{Solving Large-scale RO}\label{sec:LSRO}

The radiation therapy problems that we consider involve tens of thousands of tumor voxels and millions of healthy tissue voxels. Accordingly, formulation~\eqref{prob:SR_robust} may have hundreds of millions of constraints. Therefore, it may prove difficult to solve or even store this problem on most computers. In order to address this challenge, we propose an algorithm that dynamically generates constraints of~\eqref{prob:SR_robust} as needed.  

A schematic description is outlined by Algorithm~\ref{alg:RGAschem}. 
\begin{algorithm}[]
\caption{Constraint generation algorithm sketch}\label{alg:RGAschem}
\LinesNumbered
\SetKwInOut{Input}{Input}
\SetKwInOut{Output}{Output}
\SetKwInOut{Init}{Initialize}
\Input{Initial beamlet configuration {$x_0$}, number of initial constraints for each OAR $n_0$, number of homogeneity constraints per iteration $n_S$,  thresholds $\tau_{\Infs,1}>\tau_{\Infs,2}$ and $\tau_z$}
\Init{
$\hat H_1\subset H_1,\ldots,\hat H_K\subset H_K$ each with {$n_0$} constraints violated by $x_0$, $S\leftarrow\emptyset$\;}
\label{step:phase_i} Phase I: {Alternate between iteratively generating at most $n_H$ constraints ~\eqref{constr:OAR_UB} to each OAR until a feasibility tolerance $\tau_{\Infs,1}$ is achieved and the objective function value does not {decrease} by more than a factor of $\tau_z$, and appending at most $n_S$ homogeneity constraints \eqref{constr:ROB_HOMOGEN1}~\eqref{constr:ROB_HOMOGEN2} to set $S\subseteq T^2$. If none of the latter 
constraints are generated switch to Phase II}\;
Phase II: {The same as Phase I, with feasibility tolerance of $\tau_{\Infs,2}<\tau_{\Infs,1}$ for constraints ~\eqref{constr:OAR_UB}. Exit once no additional constraints can be generated.}
\end{algorithm}
A detailed version of the algorithm in pseudocode is given {as} Algorithm~\ref{alg:RGA} in Appendix~\ref{appx:algo}. 
The algorithm is initialized with beamlet configuration $x_0$, typically a part of a previously computed solution or an all-ones vector multiplied by some normalized dose. Based on this initial configuration, each OAR is initialized with the most violated $n_0$ constraints. This initialization, and the initial emphasis on generating OAR constraints~\eqref{constr:OAR_UB} 
in the algorithm, ensures that the optimization problem is bounded from the start or that it becomes bounded after a few iterations in phase I of the algorithm. The algorithm consists of two phases. Each one of them alternates between generation of OAR constraints~\eqref{constr:OAR_UB} and~\eqref{constr:ROB_HOMOGEN1}~\eqref{constr:ROB_HOMOGEN2}. The first phase involves an OAR constraint infeasibility tolerance $\tau_{\Infs,1}$ that is larger than the tolerance applied in the second phase, $\tau_{\Infs,2}$. Intuitively, the larger tolerance in the first phase it intended to ensure both that the subproblems become bounded fairly quickly and to delay generation of OAR constraints that may be inactive at optimality until the homogeneity constraints are nearly satisfied. 

Next we consider additional constraints that pertain to healthy organs and we develop an extension of the proposed computational method to address such constraints. 

\section{Dose-Volume Constraints}\label{sec:tuning_beta}

In many cases, clinical guidelines stipulate, in addition to or in place of 
strict bounds on the OAR voxel dose, 
\emph{dose-volume constraints}. This refers to the practice of allowing some proportion  $\alpha\in (0,1)$, usually less than one half, of the voxels to receive a larger dose than the upper bound $\bar d_K$ applying to $1-\alpha$ of the voxels. 
In its most general form, a planning optimization problem could involve several dose-volume constraints for each organ. We develop a method for approximately solving large-scale planning problems in the case of a single dose-volume constraint.  
The single dose-volume constraint setting is clinically important, as illustrated by the brain-tumor dataset examined in the computational section. 

In general, dose-volume constraints can be formulated by introducing additional auxiliary binary variables into formulation~\eqref{prob:SR_robust}, turning it into the mixed-integer program, 
\begin{subequations}\label{prob:DV_MIP}
\begin{align}
& \max_{\bx\in\R^n_+,\ubar{d}\in\R,y\in\R^{|H_K|},w\in\{0,1\}^{\card{H_K}}} && \underbar d\\ 
& \text{subject to} && \eqref{constr:ROB_MIN},\eqref{constr:ROB_HOMOGEN1},\eqref{constr:ROB_HOMOGEN2}\label{constr:ALL_ROBUST}\\
& && d_v(x) \leq \bar d_k && k\in [K-1],v\in H_k\label{constr:Hard_HK_BOUNDS2}\\
& && d_{v}(x) -y_v \leq \bar d_K\label{constr:SOFT_HK_BOUNDS}\\
& && y_v \leq (\hat d_K- \bar d_K)w_v && v\in H_K \label{constr:BOUND_DELTA}\\
& && \sum_{v\in H_K}w_v\leq \theta \label{DVCONSTRAINT}.
\end{align}
\end{subequations}
Here, 
without losing generality, it is assumed that the dose-volume constraint applies to the $K$th OAR, auxiliary continuous deviation variables $y_{v}$ are defined  for each $v\in H_K$, and  
$\hat{d}_K$ is an ``absolute'' upper bound on the dose of each voxel in $H_K$. Setting $\theta=\lfloor\alpha|H_k|\rfloor$ in~\eqref{DVCONSTRAINT} would ensure compliance with a dose-volume guideline specified as a proportion $\alpha$.  

However, solving such a formulation to optimality 
involves a hard combinatorial problem that is neither theoretically nor practically tractable for the high-dimensional problems 
considered in the current paper.  The standard continuous relaxation of~\eqref{prob:DV_MIP} replaces 
{the binary decision variables} $w\in\{0,1\}^{\card {H_K}}$ by {continuous ones} $w\in[0,1]^{\card {H_K}}$. Therefore, it is straightforward to see that replacing constraint~\eqref{DVCONSTRAINT} with $\sum_{v\in H_K} y_v \leq \theta\cdot (\hat d_K-\bar d_K)$, and fixing $w_v=1$ in~\eqref{constr:BOUND_DELTA}, for all $v\in H_K$, 
maintains the optimality of all solutions that are optimal to the continuous relaxation of~\eqref{prob:DV_MIP}. 
Hence, fixing 
{$w_v=1$ for all $v\in H_K$}, for $\Theta = \theta\cdot (\hat d_K-\bar d_K)$, the formulation  
\begin{subequations}\label{prob:BOUND_ROB_FORM}
\begin{align}
& \max_{\bx\in\R^n_+,\ubar{d}\in\R,y\in\R^{|H_K|}} && \underbar d\\
& \text{subject to} && \eqref{constr:ALL_ROBUST},\eqref{constr:Hard_HK_BOUNDS2},\eqref{constr:SOFT_HK_BOUNDS}&&\label{constr:bound_prev_constr}\\
&&& \sum_{v\in H_K} y_v \leq \Theta \label{constr:BUDGET}\\
& &&  y_v \leq \hat d_K - \bar d_K && v\in H_K\label{constr:YBND}
\end{align}
\end{subequations}
is a continuous relaxation of~\eqref{prob:DV_MIP}. As a continuous relaxation, solutions of this formulation~\eqref{prob:BOUND_ROB_FORM} with $\Theta = \theta\cdot (\hat d_K-\bar d_K)$ may provide bounds but tend to be infeasible for~\eqref{prob:DV_MIP}. On the other hand, solutions of~\eqref{prob:BOUND_ROB_FORM} for sufficiently small values of $\Theta$, for example  $\Theta=0$, together with a corresponding indicator variable vector $w$, is feasible for~\eqref{prob:DV_MIP}. This leads us to consider parametric solutions of~\eqref{prob:BOUND_ROB_FORM} and in particular solving an equivalent a penalty-based formulation (similar to a Lagrangian relaxation of~\eqref{prob:BOUND_ROB_FORM}) that relaxes constraint~\eqref{constr:BUDGET}, excluding constant terms from the objective. In particular, denote the optimal solution set of this penalty-based problem variant 
as   
\begin{equation}
\mathcal{S}(\beta)\equiv \argmax\{\ubar d-\beta\sum_{v\in H_K}y_v:\eqref{constr:bound_prev_constr}-\eqref{constr:YBND}\}   \label{prob:PEN_ROB_FORM}
\end{equation}

Of course, for every value of $\Theta$ in problem \eqref{prob:BOUND_ROB_FORM}, 
there exists a corresponding value of $\beta$ in problem \eqref{prob:PEN_ROB_FORM} such that the optimal values of $\ubar{d}$ for the two problems {coincide}. 
For every $\beta\geq 0$, the notations $s(\beta)\equiv(x(\beta),\ubar{d}(\beta),y(\beta))$ and $z(\beta)$ are used, respectively, to denote an optimal solution (an arbitrary element of the optimal solution set $\mathcal S(\beta$)) and the optimal objective value of  \eqref{prob:PEN_ROB_FORM} with penalty parameter $\beta$. 
The next 
lemma establishes that {$\ubar{d}(\beta)$} and  $\sum_{v\in H_K} y(\beta)_v$
are monotonically non-increasing in $\beta$ (see proof in Appendix \ref{appx:proof_lemma_monotome}).

\begin{lemma}\label{lemma:monotone}
For 
every $0\leq \beta_1< \beta_2<\beta_u$, 
every pair of optimal solutions $(x,\underbar d,y)\in \mathcal S(\beta_1)$ and $(\tilde x,\tilde{\underbar d},\tilde y)\in \mathcal S(\beta_2)$ satisfies 
\begin{align*}
&\sum_{v\in H_K}\tilde y_v\leq 
\sum_{v\in H_K}y_v, &&  \tilde{\ubar{d}}\leq \ubar{d}, &&\text{ and } && \tilde{\ubar{d}}-\beta_2\sum_{v\in H_K}\tilde y_v\leq \ubar{d}-\beta_1\sum_{v\in H_K}y_v.\end{align*}
\end{lemma}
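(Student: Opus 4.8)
The plan is to exploit the fact that the feasible region of \eqref{prob:PEN_ROB_FORM} described by \eqref{constr:bound_prev_constr}--\eqref{constr:YBND} does not depend on $\beta$: only the objective changes with $\beta$, and only through the linear penalty term $-\beta\sum_{v\in H_K}y_v$. This separation is exactly what makes a standard exchange (cross-optimality) argument work. Writing $Y=\sum_{v\in H_K}y_v$ and $\tilde Y=\sum_{v\in H_K}\tilde y_v$ for brevity, I would first record the nonnegativity $y_v\ge 0$ of the deviation variables (they measure dose in excess of $\bar d_K$), which I will need at the end. Then, since $(x,\ubar{d},y)$ is optimal for the $\beta_1$-problem while $(\tilde x,\tilde{\ubar{d}},\tilde y)$ is merely feasible for it, and symmetrically since $(\tilde x,\tilde{\ubar{d}},\tilde y)$ is optimal for the $\beta_2$-problem while $(x,\ubar{d},y)$ is feasible for it, I obtain the two inequalities
\[
\ubar{d}-\beta_1 Y\;\ge\;\tilde{\ubar{d}}-\beta_1\tilde Y
\qquad\text{and}\qquad
\tilde{\ubar{d}}-\beta_2\tilde Y\;\ge\;\ubar{d}-\beta_2 Y.
\]
Adding them cancels $\ubar{d}$ and $\tilde{\ubar{d}}$ and leaves $(\beta_2-\beta_1)Y\ge(\beta_2-\beta_1)\tilde Y$; dividing by $\beta_2-\beta_1>0$ yields $\tilde Y\le Y$, which is the first assertion.

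For the second assertion I would feed $Y\ge\tilde Y$ back into the first inequality above. Rearranging it gives $\ubar{d}-\tilde{\ubar{d}}\ge\beta_1\,(Y-\tilde Y)$, and since $\beta_1\ge 0$ and $Y-\tilde Y\ge 0$ the right-hand side is nonnegative, so $\tilde{\ubar{d}}\le\ubar{d}$. Note that the whole first inequality is enough here, so this step is immediate once the first assertion is in hand.

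The third assertion is precisely the statement that the optimal value $z(\beta)=\ubar{d}(\beta)-\beta\sum_{v\in H_K}y(\beta)_v$ is non-increasing in $\beta$. Starting once more from the feasibility of $(\tilde x,\tilde{\ubar{d}},\tilde y)$ in the $\beta_1$-problem, I would rewrite
\[
\ubar{d}-\beta_1 Y\;\ge\;\tilde{\ubar{d}}-\beta_1\tilde Y\;=\;\bigl(\tilde{\ubar{d}}-\beta_2\tilde Y\bigr)+(\beta_2-\beta_1)\,\tilde Y,
\]
so the desired inequality $\tilde{\ubar{d}}-\beta_2\tilde Y\le\ubar{d}-\beta_1 Y$ follows as soon as $(\beta_2-\beta_1)\tilde Y\ge 0$. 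This is the one place where I expect genuine care to be needed: the conclusion hinges on $\tilde Y=\sum_{v\in H_K}\tilde y_v\ge 0$, which is why I would isolate the nonnegativity of the deviation variables at the outset. Equivalently, because every feasible point has $\sum_{v\in H_K}y_v\ge 0$, the value function $z(\beta)$ is a pointwise maximum of affine functions of $\beta$ whose slopes $-\sum_{v\in H_K}y_v$ are all nonpositive, and hence it is non-increasing; the exchange argument above is just the finite-dimensional instance of this observation. Apart from this sign requirement, everything else is routine linear algebra, and the role of the bound $\beta_u$ is only to guarantee that $\mathcal S(\beta_1)$ and $\mathcal S(\beta_2)$ are nonempty so that the optimal solutions being compared actually exist.
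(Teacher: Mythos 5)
Your proposal is correct and follows essentially the same route as the paper's proof: the same two cross-optimality inequalities are combined to obtain the sandwich $\beta_1(Y-\tilde Y)\le \ubar{d}-\tilde{\ubar{d}}\le \beta_2(Y-\tilde Y)$ for the first two assertions, and the third follows from feasibility of the $\beta_2$-solution in the $\beta_1$-problem together with $\tilde y\ge 0$ and $\beta_2>\beta_1$. Your explicit flagging of the nonnegativity of the deviation variables is exactly the point the paper also relies on.
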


Lemma~\ref{lemma:monotone}, together with the fact that there exists a value of $\beta>0$ for which $\sum_{v\in H_K}y_v(\beta)=0$,  
implies that for every value of $\Theta\geq 0$, it is possible to find a minimal value of $\beta$ such that~\eqref{constr:BUDGET} is satisfied.  
\noam{Motivating our method is both monotonicity of $\vectornorm{y(\beta)}_1=\sum_{v\in H_K}y(\beta)_v$, following Lemma~\ref{lemma:monotone}, and that} the one-norm is a convex approximation of 
the zero-``norm'' {($\vectornorm{y(\beta)}_0$)} \noam{, which is the number of nonzero components of the deviation variable vector $y$}.
\noam{The one-norm} is effectively used to promote sparsity in large-scale machine learning and compressed sensing applications (see, for example,~\cite{Donoho2003}).  We expect that solving \eqref{prob:PEN_ROB_FORM} with appropriate $\beta$ will approximate the problem with dose-volume constraints \eqref{DVCONSTRAINT} reasonably well.
Our goal is to find the minimal value of $\beta$ for which the optimal solution of \eqref{prob:PEN_ROB_FORM} satisfies \eqref{DVCONSTRAINT}. To achieve this , 
LP \eqref{prob:PEN_ROB_FORM} is solved parametrically,  starting from a lower bound on $\beta$ and 
traversing optimal bases for the given subsets of constraints defining the reduced problem. This algorithm is presented as 
Algorithm~\ref{alg:PARAMETRICschem}.  

An additional difficulty in implementing our algorithm lies in the fact that, in practice, \eqref{prob:PEN_ROB_FORM} is often too large to solve with all constraints present, and the constraints are generated on the fly as in Algorithm~\ref{alg:RGAschem}. 
 Consequently, some of the constraints and variables may be absent from the formulation, and accordingly the reduced cost of these variables, as well as the full basis
  are not readily available.
  {However, both the reduced cost and full basis are needed for determining the minimal increase in $\beta$ for which the current optimal basis will no longer be optimal, and consequently $\vectornorm{y}_0$ may change.}
  Our parametric algorithm takes this into account by possibly retracting and allowing the value of the parameter $\beta$ to decrease if 
the computed solution is not adjacent to the preceding basic feasible solution in the polyhedron that incorporates the newly generated constraints.  
\begin{algorithm}
	\caption{Parametric penalty algorithm sketch\label{alg:PARAMETRICschem}}
	\LinesNumbered
	\SetKwInOut{Input}{Input}
	\SetKwInOut{Output}{Output}
	\SetKwInOut{kwInit}{Initialize}
	\SetKwRepeat{Do}{do}{while}
	\Input{Initial bounds $0\leq \beta_l < \beta_u$, BFS optimal for lower bound $\beta=\beta_l$, initial constraint subsets $(S,\hat H_1,\ldots,\hat H_K)$
	}
	\While{$\vectornorm{y_{\hat H_K}}_0>\theta$ or current BFS is not adjacent to the previous one in the full polyhedron with constraints $(T,H_1,\ldots,H_K)$}
	{
  {If} the current BFS satisfies (or violates) $\vectornorm{y_{\hat H_K}}_0>\theta$ then minimally increase (respectively decrease) $\beta$ so that current BFS is no longer optimal\;
		Solve restricted LP $(S,\hat H_1,\ldots,\hat H_K,\beta)$ to obtain optimal
		BFS with updated constraint sets $(S,\hat H_1,\ldots,\hat H_K)$\label{RCInvokeStepSchem}
	}
	\Output{$\beta$ and corresponding BFS.}
\end{algorithm}

A detailed version of this algorithm is given in Appendix~A.3
. The algorithm applies row and column generation in Step~\ref{RCInvokeStepSchem} in order to solve $\Phi(S,\hat H,\ldots,\hat H_K,\beta)$; the details of this procedure are presented as Algorithm~\ref{alg:RCGA} in Appendix~\ref{sec:RCGenAlg}. This algorithm is a subroutine for solving \eqref{prob:PEN_ROB_FORM} that is similar to Algorithm~\ref{alg:RGAschem} in that it solves the  problem for a subset of the constraints (rows), but it also generates columns corresponding to $y$ variables as needed (only those components of $y$ corresponding to constraints~\eqref{constr:SOFT_HK_BOUNDS} and~\eqref{constr:BOUND_DELTA} that have already been generated). 

Let $\beta^*=\min\{\beta:\exists (x,\ubar{d},y)\in \mathcal{S}(\beta), \norm{y}_0\leq \lfloor\alpha |H_K|\rfloor\}$; that is, $\beta^*$ is the minimal penalty parameter for which the optimal solution to \eqref{prob:PEN_ROB_FORM} satisfies
~\eqref{DVCONSTRAINT}. Bounds on $\beta^*$ can be established using Lagrange multipliers of constraint~\eqref{constr:BUDGET} for solutions that are optimal to~\eqref{prob:BOUND_ROB_FORM} for different values of $\Theta$, specifically, $\beta_l$ and $\beta_u$ 
for $\Theta =  \lfloor\alpha |H_K|\rfloor(\hat d_K -\bar d_K)+\epsilon$ (for some $\epsilon>0$) and $\Theta =  0$, respectively. 
These bounds are proven in Appendix~
B.2. The following proposition establishes the correctness of Algorithm~\ref{alg:PARAMETRICschem} given such bounds.

\begin{proposition}\label{prop:param_finite_steps}  Suppose that LP~\eqref{prob:PEN_ROB_FORM} has a unique optimal basis for all $\beta\in[\beta_l,\beta_u]$. Then, 
Algorithm~\ref{alg:PARAMETRICschem} determines $\beta^*$ in a finite number of steps.
\end{proposition}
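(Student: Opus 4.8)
The plan is to prove finiteness and correctness separately, working throughout with the \emph{full} parametric linear program~\eqref{prob:PEN_ROB_FORM}---the one containing every constraint of $(T^2,H_1,\ldots,H_K)$---and viewing the row-and-column generation subroutine (Algorithm~\ref{alg:RCGA}) as a routine that, for a fixed $\beta$, returns a basic optimal solution of this full LP. Because $\beta$ enters the objective linearly, the optimal value $z(\beta)$ is piecewise linear and concave on $[\beta_l,\beta_u]$, and the optimal basis changes only at finitely many breakpoints $\beta_l=t_0<t_1<\cdots<t_N\le\beta_u$, since the LP has finitely many bases. The uniqueness hypothesis makes the optimal basis, and hence the support of $y(\beta)$, constant on each interval $(t_{i-1},t_i)$, so $\vectornorm{y(\beta)}_0$ is piecewise constant and changes only at a breakpoint; combined with the monotonicity of $\sum_{v\in H_K}y(\beta)_v$ from Lemma~\ref{lemma:monotone}, this shows that $\beta^*$ is a breakpoint (or $\beta_l$), namely the smallest $t_i$ at which $\vectornorm{y(\beta)}_0\le\theta$, where $\theta=\lfloor\alpha\card{H_K}\rfloor$.

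For finiteness I would bound the two kinds of iterations. A ``forward'' iteration increases $\beta$ minimally until the current basic feasible solution loses optimality and then re-solves; when the restricted problem already contains every row and column active along the full path near the current $\beta$, the step advances from one breakpoint $t_{i-1}$ to the next $t_i$ and the adjacency test in the full polyhedron passes, so there are at most $N$ such genuine forward moves. The complication is generation: a restricted basic solution may fail the adjacency test, or may spuriously satisfy $\vectornorm{y_{\hat H_K}}_0\le\theta$ only because a constraint~\eqref{constr:ROB_HOMOGEN1}--\eqref{constr:ROB_HOMOGEN2} or a $y$-column that is in fact positive was omitted. Each such failure triggers the generation subroutine to append the missing rows/columns and possibly retract $\beta$. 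The decisive bound is that every generation event strictly enlarges the restricted sets $(S,\hat H_1,\ldots,\hat H_K)$, which are contained in the finite full sets; hence only finitely many generation events occur. Between consecutive events the restricted problem is fixed and therefore contributes only finitely many parametric steps, so multiplying the finite number of generation phases by the finite number of breakpoints of each fixed restricted LP gives a finite total and the algorithm halts.

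For correctness I would use the exit condition: the loop stops only when $\vectornorm{y_{\hat H_K}}_0\le\theta$ \emph{and} the current basic solution is adjacent, in the full polyhedron, to the previous one. The adjacency certificate, together with the optimality guarantee of the generation subroutine at a fixed $\beta$, ensures that no violated row or positive-but-omitted column remains, so the restricted solution is a genuine basic optimal solution of the full LP and $\vectornorm{y_{\hat H_K}}_0=\vectornorm{y(\beta)}_0$. Since the traversal starts at $\beta_l$ and each accepted forward step moves to the next breakpoint, the first $\beta$ that clears both conditions is the smallest breakpoint with $\vectornorm{y(\beta)}_0\le\theta$; because $\beta_l\le\beta^*\le\beta_u$ by the bounds established in Appendix~B.2, this value is exactly $\beta^*$.

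The step I expect to be the main obstacle is the bookkeeping tying generation to the parametric traversal: one must show that a \emph{passed} adjacency test genuinely certifies optimality for the full problem rather than an artifact of the current restriction, and that the retraction steps cannot cycle. The monotone growth of the restricted constraint and column sets toward the finite full sets is the lever that prevents infinite looping, but making the ``adjacency certifies full-problem optimality'' implication precise---so that finiteness and correctness reinforce one another rather than merely coexist---is the delicate part of the argument.
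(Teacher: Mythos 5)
Your argument follows essentially the same route as the paper's proof in Appendix~\ref{appx:proof_param_finite}: both rest on the finiteness of the set of bases, the monotonicity established in Lemma~\ref{lemma:monotone}, and the uniqueness assumption guaranteeing that the traversal of optimal bases over $[\beta_l,\beta_u]$ identifies the smallest $\beta$ with $\vectornorm{y(\beta)}_0\le\theta$. Your breakpoint framing and explicit accounting of generation events are a more structured presentation of the same idea, and the delicate point you flag --- that a passed adjacency test certifies optimality for the full LP and that retractions cannot cycle --- is exactly what the paper's much terser proof also leaves implicit.
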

\noindent The proof of the proposition is deferred to Appendix~\ref{appx:proof_param_finite}. 

\section{Experiments}\label{sec:computational}

In our experiments, we focus on a particular case study based on real brain data of Patient 4 from the TCIA archive~\citep{Clark2013}, referred to as the Brain dataset in Table~\ref{table:datasets}.
Due to the very large number of constraints in~\eqref{prob:nominal}, and especially in~\eqref{prob:SR_robust}, for this dataset, we also use a smaller liver imaging dataset~\citep{Craft2014}
to evaluate the computational performance of our dynamic generation of OAR and homogeneity constraints in Algorithm~\ref{alg:RGA}. 
FMISO-PET images are not available for this smaller dataset, so {for this particular dataset,} the radiosensitivity data are created synthetically.
Oxygen levels tend to \noamrr{deccrease}, and accordingly radiosensitivity values tend to decrease, 
as we approach the center of the PTV. Thus, to create sensible radiosensitivity values, we first define vector $f$ 
such that, for each PTV voxel $v\in T$, \noamrr{$f_v=(2\pi)^{-3/2}\det(\Sigma)^{-1/2}
e^{-\frac{1}{2}(\loc_v-\overline{\loc})^\top\Sigma^{-1}(\loc_v-\overline{\loc })}
$},
where $\loc_v$ is the vector of x-y-z coordinates of voxel $v$'s position,   $\overline{\loc}=\frac{1}{|T|}\sum_{v\in T}\loc_v$ is the mean voxel position, and the \noamrr{covariance is given by a} ($3\times 3$) \noamrr{scaled} covariance matrix of the  PTV voxel positions, 
$\Sigma=\frac{\noamrr{50}}{|T|}\sum_{v\in T}(\loc_v-\overline{\loc})(\loc_v-\overline{\loc})^\top$. The radiosensitivity for voxel $v\in T$ is then set to be $\phi_v = 0.85+0.15\times \frac{f_{\max}-f_v}{f_{\max}-f_{\min}}$, where $f_{\max}=\max_{u\in T}\{f_u\}$ and $f_{\min}=\min_{u\in T}\{f_u\}$. This choice of $\phi_v$ implies 
that 
it is contained in $[0.85,1]$, while 
as voxel v’s position is closer to the center of
the PTV, the radiosensitivity, $\phi_v$,  approaches $0.85$.
Details of both datasets are given in Table~\ref{table:datasets}.

\begin{table}[H]
  \caption{Imaging data  summary\label{table:datasets}}
    \centering
    \begin{tabular}{lcrlrr}
    Dataset&  PTV   & \multicolumn{4}{c}{OAR}\\
    \cline{3-6}
    &$\card{T}$& $k$& Name & $\card{H_k}$ & $\bar{d}_k$ bounds\tablefootnote{Based on \citep{ICRU2010}}\\
    \hline 
    \hline
Brain (Patient 4) & 32,367 & 1 & Exterior & 1,870,179 & 100\\
&& 2& Brain & 1,360,182 & 62\\
&& 3 & Chiasm & 2,003 & 54 \\
\hline
Liver & 6,954 &1 & Liver & 77,657 & 60 \\
&&2 & Heart & 32,983 & 50\\
&&3 & Entrance & 146,462 & 60\\
    \hline
    \end{tabular}
\end{table}

\subsection{Computational Performance}\label{sec:compperform}
Our implementation uses the Julia programming language, version 1.6.2, the JuMP package~\citep{Lubin2015}, and the Gurobi solver, version 9.1.2. 
The experiments are run on 40 core Linux servers each equipped with Intel Xeon {E5-2650}, 
25 MB cache, 2.3 GHz base frequency CPUs, and 128 GB of RAM. \noam{Algorithm~\ref{alg:RGAschem} (appearing in greater detail as Algorithm~\ref{alg:RGA} in 
	Appendix A.1)} is invoked for solving the  robust model~\eqref{prob:SR_robust} and the nominal model~\eqref{prob:nominal}, with parameter values $\tau_{\Infs,1}=10$, $\tau_{\Infs,2}=0$, $\tau_z=10^{-2}${, and $n_H=400$}. 
The homogeneity parameter is set to $\mu=1.15$ for Liver instances and {$\mu=1.1250$ for Brain instances for the nominal experiments, and $\mu=1.1875$, which obtains a similar homogeneity and biologically-adjusted dose in the nominal scenario.}   Table~\ref{table:runningtime} shows the running times achieved when solving the nominal model~\eqref{prob:nominal}, which involves only $\card T$ homogeneity constraints. 
The results show that dynamic constraint generation is not needed to solve the nominal model 
but with careful tuning of the constraint generation algorithm, it can improve on the running time required to solve the full model (also yielding significant savings in terms of memory consumption), especially for larger data instances such as 
Brain. 
Table~\ref{table:runningtime2} shows the running times when applying the robust model to both datasets. The parameters of the uncertainty set are set to $\delta=0.04$ and $\gamma=0.04$ for Liver, and {$\delta=0.08$ and $\gamma=0.04$} for Brain. 
Here, it is evident that dynamic constraint generation is necessary in order to be able to solve both models when the number of homogeneity constraints is $O(\card{T}^2)$. {However, dynamically generating all violated constraints may result in excessive running times. In the example shown in Table~\ref{table:runningtime2}, dynamically generating all violated constraints in each iteration results in a running time of nearly four hours for the Brain instance. In  preliminary experiments, with a different choice of parameters than experimented with here, when generating all violated constraints, we have observed cases where the running times took more than 48 hours to solve, and cases memory consumption exceeded its limits.} 

\begin{table}[th]
\caption{\label{table:runningtime}Running time in seconds and number of constraints generated in solving the nominal model~\eqref{prob:nominal}. Here, constraint generation, if indicated, implies that all violated constraints are generated in each iteration.}
    \centering
        {
\begin{tabular}{l|r|r|r|r|r|}
    
      \textbf{Hom cons} & \textbf{OAR cons}& \multicolumn{2}{c|}{\textbf{Liver}} & \multicolumn{2}{c|}{\textbf{Brain}}\\ 
     \textbf{generation} & \textbf{initial - $n_0$} & \textbf{Time} & \textbf{\# generated} & \textbf{Time} & \textbf{\# generated}\\
    \hline
    \hline
     No & 2,000 & 126.3 & 19,908 & \textbf{586.7} & 70,737\\
     No & 8,000 &  \textbf{82.3} & 37,908 & 731.7 & 82,737 \\
      No & all & 453.4 & 271,010 & 5207.2 & 3,297,098\\
      Yes & 2,000 & 189.5 & 22,858  & 3,235.1 & 72,160\\
      Yes & 8,000 &    175.9 & 37,531 & 3,458.7 & 77,730\\
       Yes & all & 563.7 & 265,977 & 5,117.7 &  3,264,928\\
      \end{tabular}}
      \end{table}

   \begin{table}[H]
\caption{\label{table:runningtime2}Running time in seconds and number of constraints generated in solving~\eqref{prob:SR_robust} using Algorithm~\ref{alg:RGA}.  
}
     {\centering
     {
      \begin{tabular}{r|r|r|r|r|r}      
           \textbf{Hom cons} & \textbf{OAR cons} & \multicolumn{2}{c|}{\textbf{Liver}} & \multicolumn{2}{c}{\textbf{Brain}}\\
           \textbf{per iter - $n_s$} & \textbf{initial - $n_0$}& \textbf{Time} & \textbf{\# generated}& \textbf{Time} &\textbf{\# generated}\\
    \hline
    \hline
       2,000 & 2,000 & \textbf{3,718.6} & 92,890  & \textbf{4,940.4} & 82,160\\
       2,000 & all & 10,215.3 & 335,528 & 12,861.4 & 3,271,931\\
       8,000 & 2,000 & 3,857.8 & 130,343 &  5,255.7 & 106,261 \\
      8,000 & all & 10,033.0 & 385,506 & 12,029.5 & 3,286,449\\
    all & all & 8,133.5  & 408,747 & 14,356.1 & 3,424,511 \\      
    \end{tabular}}
    }
\end{table}

\subsection{Robust Model Evaluation}
Next, the spatially {bound} uncertainty set $(\USB)$, introduced in Section~\ref{sec:uncertainty}, will be constructed and customized for our patient data. Subsequently, the benefits of both the robust model~\eqref{prob:SR_robust} based on this uncertainty set,  and the dose-volume methodology of Section~\ref{sec:tuning_beta}, will be examined and discussed.

\subsubsection{Uncertainty Set Construction}

The method of constructing our uncertainty set is justified in Section~\ref{sec:uncertainty} using the data of brain cancer Patient 1 from the TCIA archive in~\cite{Clark2013}, illustrated in Figure~\ref{fig:Patient1VoxelRadioSensitive}.  The graphs in Figure~\ref{fig:Patient4VoxelRadioSensitive}, which are based on the Brain data of Table~\ref{table:datasets} (Patient 4 from the archive), demonstrate a similar pattern to the one that emerges from Figure~\ref{fig:Patient1VoxelRadioSensitive}. Both figures clearly illustrate that the radiosensitivity distance increases in the geometric voxel distance, up to a geometric distance of approximately ten. {To estimate the difference in radiosensitivity between voxel pairs as a function of the voxel distance $\Delta$, we adopt the following model with parameters $\alpha_0$, $\alpha_1$, $\alpha_2$, and $\gamma$ (initially 0),  
$$\Gamma(\Delta) = 
\begin{cases}\gamma+\alpha_0 + \alpha_1\Delta + \alpha_2 \log(\Delta)& \Delta \leq 10\\
\Gamma(10) & \text{otherwise.}
\end{cases}
$$ 
 The estimated $\Gamma$ is personalized to Patient 4's PET data. It is estimated through a {constrained} least squares fit of the radiosensitivity difference $p$-th percentile curve (see Figure~\ref{fig:Patient4VoxelRadioSensitive}), for voxel distances up to {$\Delta=10$. The least squares problem solved, is constrained in our case, so that the fitted curve upper bounds the 98th percentile graph}. 
 Having obtained the best fit curve, the constant $\alpha_0$ is then slightly increased so that the curve upper bounds the same $p$-th percentile for all $\Delta\geq 1$. The constant $\gamma\geq 0$ is used to 
adjust the model to be even more conservative.  
The final estimated parameter values are {$\alpha_0=0.0292761$, $\alpha_1=-0.0013514$, and $\alpha_2=0.0128265$}.
\begin{figure}
\includegraphics[scale=0.33]{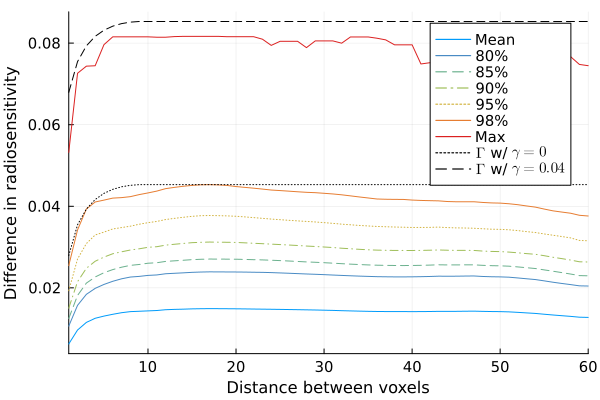}
\caption{
{Radiosensitivity difference vs. voxel distance}
for PTV voxel pairs {of Patient 4 in the  TCIA archive} Brain dataset.
}\label{fig:Patient4VoxelRadioSensitive}
\end{figure}
Evidently, $\Gamma$ satisfies Assumption~\ref{ass:gamma_struct} for all $\gamma\geq 0$. Moreover, for $\gamma=0$,  by construction, 
 $\Gamma$ upper bounds the $98$th
 percentile graph, and for $\gamma=0.04$, $\Gamma$ 
 upper bounds the graph of the maximum, 
 as seen in Figure~\ref{fig:Patient4VoxelRadioSensitive}(A).}

{The magnitude of the uncertainty set parameter $\delta$, which bounds the deviation of a voxel's radiosensitivity from its calculated value can be estimated by a sensitivity analysis of underlying theoretical equations in the uncertain constants; see for example the discussion in Appendix~C
	.}

\subsubsection{Robust Formulation Experiments}
We now evaluate optimal solutions of {\emph{spatially robust}  model}~\eqref{prob:SR_robust} and compare them with optimal solutions of the nominal formulation~\eqref{prob:nominal} for the Brain dataset. 
We also compare with {the \emph{robust box} model that is }a robust model~\eqref{prob:general_robust} under a box uncertainty set ($\mathcal U_B$). {The solutions to the robust box model} can also be obtained by 
solving~\eqref{prob:SR_robust} with $\ubar\phi{=\ubar\phi^0}$ and $\bar\phi{=\bar\phi^0}$ {by setting
$\gamma_{uv}$ in the $\USB$ uncertainty set} {to a large value (it suffices to set $\gamma_{uv}=1$)} for all $u,v\in T$. {Thus, the robust box case can be viewed as a special (simple) case of our suggested approach.}

Optimal solutions of~\eqref{prob:SR_robust} (with $\USB$) are computed for a range of uncertainty set parameter values, {$\delta=0.02,0.04,\ldots,0.14$ and $\gamma=0.01,\ldots,0.05$}, and a range of homogeneity parameter values, {$\mu=1.0625, 1.0750,\ldots,1.4000$}. Note that for some combinations of uncertainty set and homogeneity parameters, formulation  \eqref{prob:SR_robust} 
has $x=0$ as the only feasible solution; such solutions are omitted from the graphs. 
For the box uncertainty set ($\mathcal U_B$), {the solution were computed with the same values for $\delta$, however,} a larger range of homogeneity parameters, {$\mu=1.0625, 1.0750,\ldots,1.4750$}, is explored to be able to obtain nonzero solutions {for all $\delta$ values. } {Moreover, in order to obtain a more accurate value of $\mu$ below which there is no nonzero dose solution, for each $(\gamma,\delta)$ pair, the $\mu$ values between the first nonzero dose solution and the last zero dose solution were refined, first in increments of $0.0025$, then in incremenets of $0.0005$, and finaly in increments of $0.0001$.} {
To evaluate a plan involving beamlet decision variable vector $x^*$ 
under uncertainty set $\mathcal{U}$, which could be a singleton when considering  the nominal case,  empirical measures of worst-case dose and homogeneity are defined as follows
\begin{align*}
\hat{\ubar{d}} = \hat{\ubar{d}}(x^*,\mathcal{U}) = \min_{\hat\phi\in \mathcal{U}}\min_{v\in T}\hat\phi_vd_v(x^*) && \text{ and } &&
\hat \mu = \hat\mu(x^*,\mathcal{U}) = \max_{\hat\phi\in \mathcal{U}}\max_{u,v\in T}\frac{\hat \phi_ud_u(x^*)}{\hat \phi_vd_v(x^*)}; 
\end{align*} 
$\hat{\ubar{d}}$ and $\hat \mu$ are used as abbreviations when $x^*$ and $\mathcal{U}$ are unambiguous in the given context.}

Before comparing the performance of solutions that are optimal to~\eqref{prob:SR_robust} under different scenarios, it should be noted that the optimal solution value of~\eqref{prob:SR_robust} decreases in the uncertainty set parameters $\delta$ and $\gamma$ (see Figure~\ref{fig:OPTVSDELTA} in Appendix~\ref{appx:numerical}). Note that combinations of {$\mu$}, $\delta$, and $\gamma$ resulting in a zero beamlet intensity configuration ($x={\mathbf 0}$) are omitted from all figures and are interpreted as ``infeasible'' plans. 

\begin{wrapfigure}[12]{R}{0.7\textwidth}
    \centering \includegraphics[scale=0.75]{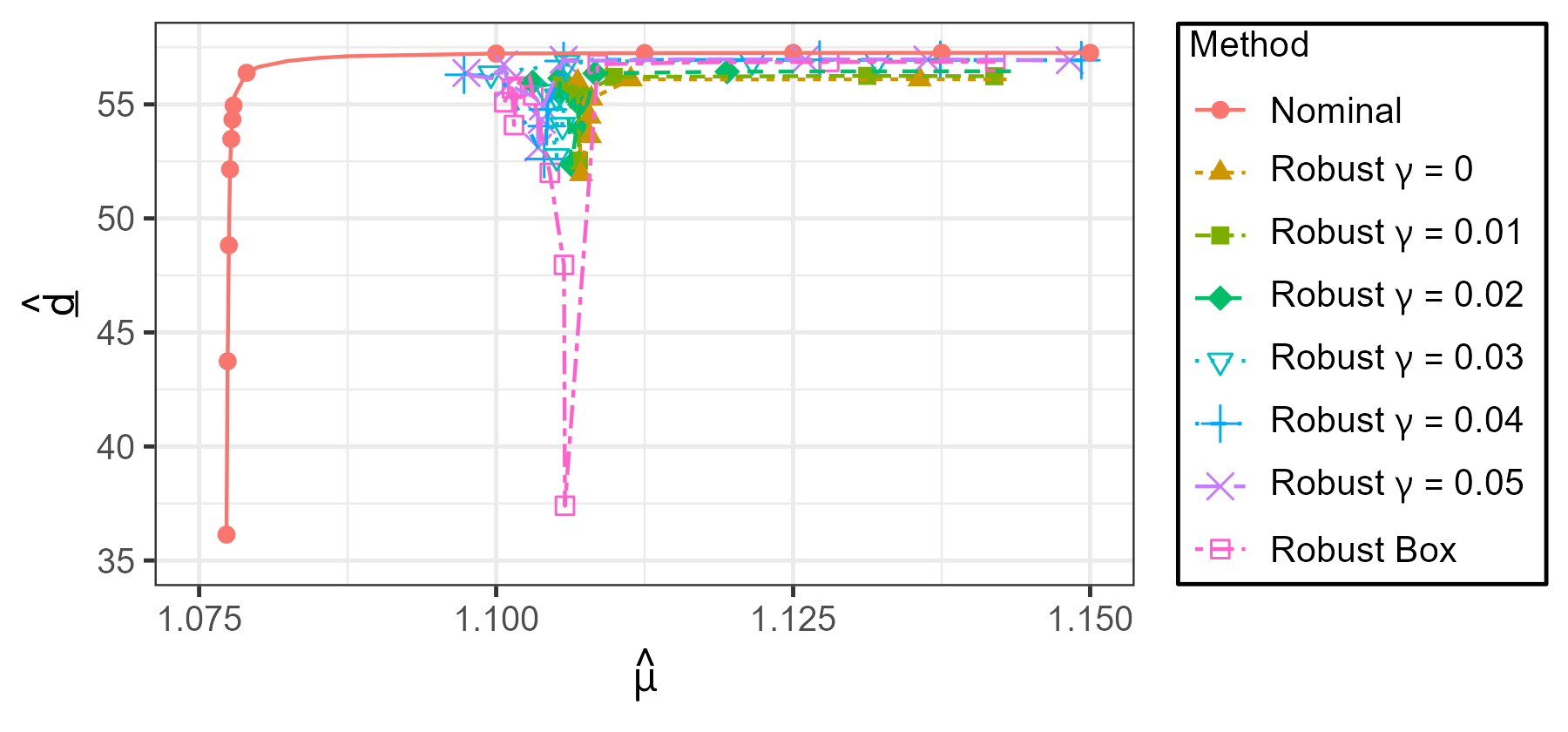}
    \caption{Nominal {biologically-adjusted} performance of  solutions optimal 
   to {model~\eqref{prob:SR_robust} for  $\delta=0.14$} and different values of $\gamma$, compared to the nominal solution. 
    \label{fig:nominal_delta0.14}}
\end{wrapfigure}

A ``cost-of-robustness'' can be measured by evaluating a robust solution in the nominal 
scenario where there is no uncertainty. Figures~\ref{fig:nominal_delta0.14} 
and~\ref{fig:OPTandTESTvsDELTA} illustrate  this phenomenon. The nominal formulation~\eqref{prob:nominal} corresponds to  setting $\delta=0$ and $\gamma$ sufficiently large in $\USB$; for the particular dataset being considered (Patient 4), 
the nominal formulation is equivalent to a setting of $\delta=0$ and 
$\gamma\geq 0.04$ in formulation~\eqref{prob:SR_robust}). On the other hand, uncertainty sets with $\gamma\leq 0.03$ exclude the nominal scenario. Consequently, models with smaller $\gamma$ values tend to perform poorly, at least in terms of the minimum  biological dose, in the nominal scenario. Figure~\ref{fig:nominal_delta0.14} 
shows the performance in the nominal scenario, for {$\delta=0.14$}, with different values of $\gamma$ and $\mu$ in \eqref{prob:SR_robust} (a curve is presented for each value of $\gamma$, obtained by varying the values of $\mu$).  
In general,  
the ``cost of robustness''  
{appears} more evident as $\gamma$ decreases: {the minimal} $\hat\mu$ {that {is} attained with a nonzero dose solution}  
increases,  
{and} $\hat{\ubar{d}}$ {somewhat} decreases. 
This can be explained by the fact that the nominal  
scenario is excluded from the uncertainty set when $\gamma\leq 0.03$, and the distance between this scenario and the uncertainty set increases as $\gamma$ decreases. 
The figure also shows that the robust box solution and the robust spatially {bound} solution with 
$\gamma=0.04$ exhibit fairly 
similar performance in the nominal scenario {but the robust box appears less stable and demonstrates a notable drop in dose for biological homogeneity values slightly larger than 1.1. Note that a lack of stability stems from the fact that the measured $\hat\mu$ and $\hat{\ubar d}$ values typically differ from the values of $\mu$ and $\hat{\ubar d}$, respectively, that are used in optimizing~\eqref{prob:SR_robust}. Consequently, even slight changes in the parameter $\mu$ may in some cases induce a different optimal solution that is associated with significantly different values of the measured $\hat \mu$ and $\hat{\ubar d}$.}
{In this case, when increasing $\mu$ from $1.3603$ to $1.3607$, the nominal performance of the robust box model solution improves with respect to both homogeneity and dose, from $\hat{\mu}=1.105$ and $\hat{\ubar{d}}=37.4$ to $\hat{\ubar{\mu}}=1.1$ and $\hat{\ubar{d}}=55.1$.}

Figure~\ref{fig:OPTandTESTvsDELTA} shows nominal performance curves for $\gamma=0.02, 0.04, 1$,   
$\delta=0.02,0.04,\ldots,0.1{4}$, and a range of $\mu$ values (each curve is obtained by varying the value of $\mu$). For each of the $\gamma$ values,  the ``cost of robustness'' becomes more evident as $\delta$ increases: the curves for larger $\delta$ values appear to be {nearly} dominated in the $\hat \mu-\hat{\ubar{d}}$ space by the curves obtained for smaller $\delta$ values. 
{
The robust box results appear much less stable than the corresponding  
{SR model} 
{results for $\gamma\in\{0.02,0.04\}$}, especially for values of $\hat\mu\leq 1.11$,} 
{with doses 25Gy lower than the optimal nominal solution compared to at most 5Gy lower for $\gamma=0.02$ and $0.04$.}
Recall that solutions of~\eqref{prob:SR_robust} with $\gamma=0.02$ are not designed to protect against the nominal realization (which is excluded from the uncertainty sets
with 
{$\gamma < 0.04$}). 
Hence, the solutions obtained in the case where $\gamma=0.02$ 
{have a biologically-}adjusted dose of  {approximately $0.81$ Gy less than the corresponding optimal nominal dose for all }{$\hat\mu > 1.11$. When increasing $\gamma$ to $0.04$, the uncertainty set captures the nominal solution and so the dose obtained for $\hat{\mu}\geq 1.11$ become equal to those of the nominal model, albeit the minimal $\hat{\mu}$ for which there was a nonzero dose is higher than the nominal model, even for small values of $\delta$. This trend persists when moving to the robust box model, which results in almost optimal nominal performance for $\delta\leq 0.04$.} 

\begin{figure}[t]
     \hskip-35pt\begin{subfigure}[t]{0.33\textwidth} 
     \hskip-8pt\includegraphics[scale=0.8]{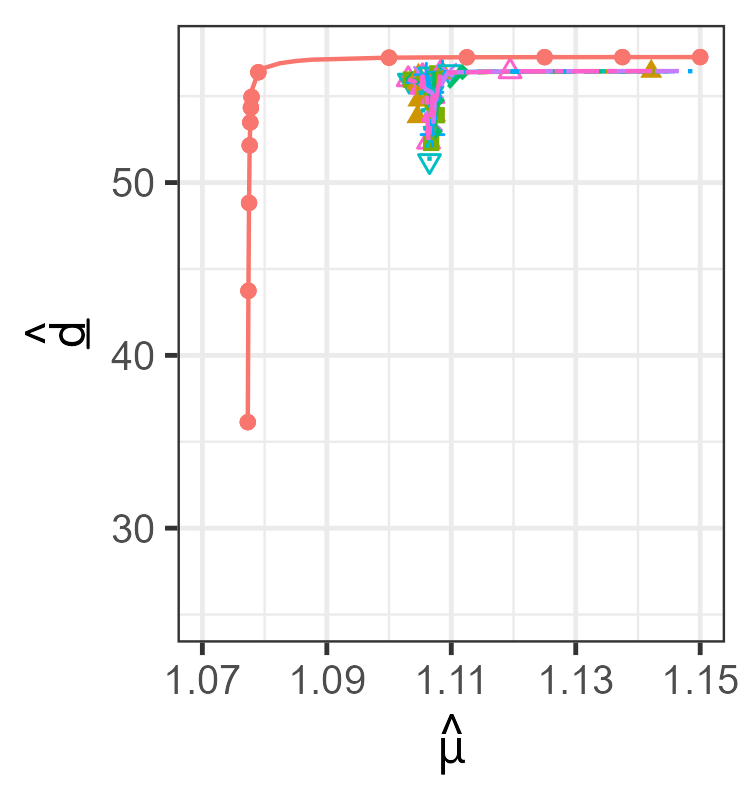}
    \vskip-10pt\caption{$\hat{\ubar{d}}$ vs. $\hat\mu$, $\gamma=0.02$}\label{fig:OPTandTESTvsDELTA_A}
    \end{subfigure}\hskip-20pt
    \begin{subfigure}[t]{0.33\textwidth}
    \includegraphics[scale=0.8]{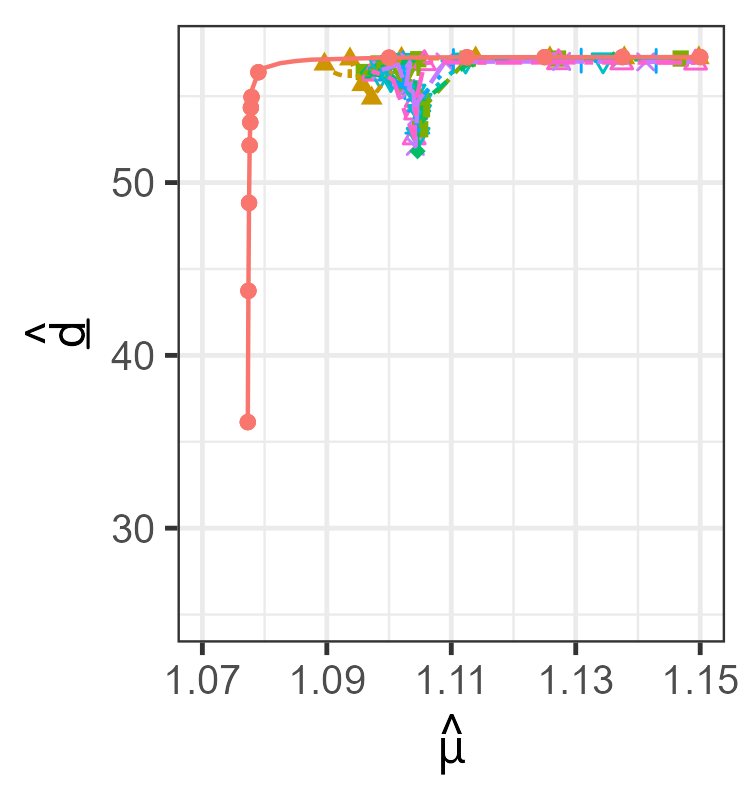}
   \vskip-10pt\caption{$\hat{\ubar{d}}$ vs. $\hat\mu$, $\gamma=0.04$}\label{fig:OPTandTESTvsDELTA_B}
    \end{subfigure}
    \hskip-15pt\begin{subfigure}[t]{0.34\textwidth}\includegraphics[scale=0.8]{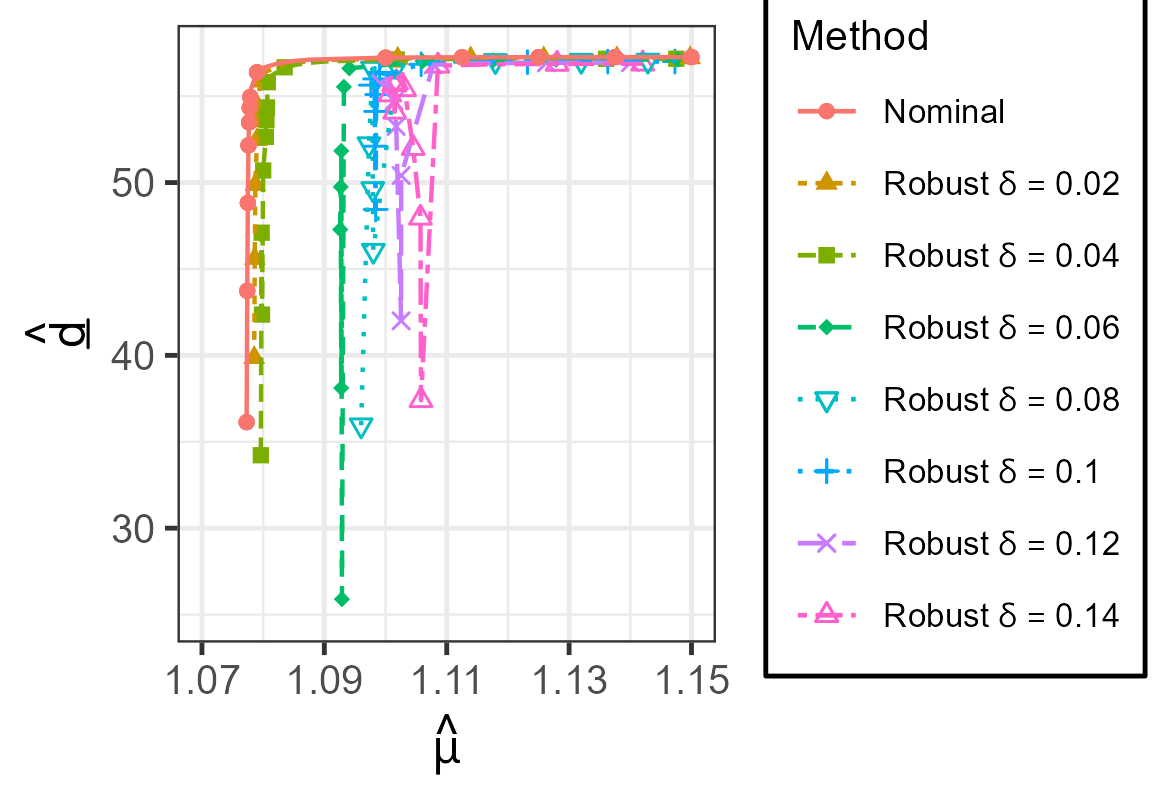}
   \vskip-10pt\caption{$\hat{\ubar{d}}$ vs. $\hat\mu$, robust box  ($\gamma=1$)}\label{fig:OPTandTESTvsDELTA_C}
    \end{subfigure}
   \caption{Nominal {biologically-adjusted} performance of solutions optimal
   to {models}~\eqref{prob:nominal} and~\eqref{prob:SR_robust}. ({for $\gamma=0.02$}, $\delta=0$ is infeasible.) 
   }
   \label{fig:OPTandTESTvsDELTA}
\end{figure}


Next, we consider the physical dose performance of the obtained solutions. Although physical performance measures do not account for the variable radiosensitivity, and planning based on such measures may not utilize the information that has been recently made available by advanced biomarkers such as FMISO, they are still important as the ongoing  { and }current basis for treatment guidelines and clinical practice. Keeping in mind that physical performance is not explicitly maximized in the optimization formulations~\eqref{prob:nominal} or~\eqref{prob:SR_robust}, it may be interesting to examine how the resulting planning solutions fare with respect to the physical performance measures. {Figure~\ref{fig:physical_performance_delta0.14}} compares solutions of \eqref{prob:SR_robust} with {$\delta=0.14$} and different values of $\gamma$, the robust box solution (also with {$\delta=0.14$}), and the nominal solution. {Figure~\ref{fig:physical_delta0.14} depicts the physical homogeneity vs. minimal physical dose of the different models, and Figure~\ref{fig:EUD_delta0.14} shows the EUD (equivalent uniform  dose). The EUD measure was introduced as a single number representing the dose throughout the PTV \shimritr{(\citealp{niemierko1997reporting} and \citealp{choi2002generalized})}, and is given by the formula $EUD=\left(\frac{1}{|T|} \sum_{v\in T} d_v^\alpha\right)^{\frac{1}{\alpha}},$
where, as in \cite{Nohadani2017} we use $\alpha=-10$.}
Surprisingly, both the optimal nominal solution and the robust box solution 
exhibit poor performance compared with the solutions of 
{SR model} \eqref{prob:SR_robust} with respect to both physical homogeneity and physical minimal PTV dose{, for physical $\mu$ values in the interval $(1.07,1.10)$}. r{Specifically, the solutions of the nominal and robust box models are less stable and, with both physical dose and EUD up to 20Gy lower than for $gamma\leq 0.05$, and a minimal physical homogeneity value of around $1.07$  compares to $1.02$ for $\gamma\leq 0.05$.}
Similar behavior can be observed for other values of $\delta$ tested; see Figure~\ref{fig:physical_deltaall} in  Appendix~\ref{appx:numerical}.  
 A possible explanation for this behavior is that under both the nominal and the box uncertainty plans, the dose is allocated in a way that is focused mainly on the PTV voxels with the lowest radiosensitivities (following the max-min formulation). The spatially {bound} uncertainty may smooth out the PTV radiosensitivity uncertainty, resulting in a worst case that considers a greater number of voxels, including voxels for which the radiosensitivity is nominally higher, thus increasing the minimal physical dose.

\begin{figure}[t]
    \hspace{-35pt}
\begin{subfigure}[t]{0.44\textwidth}
  \includegraphics[scale=0.65]{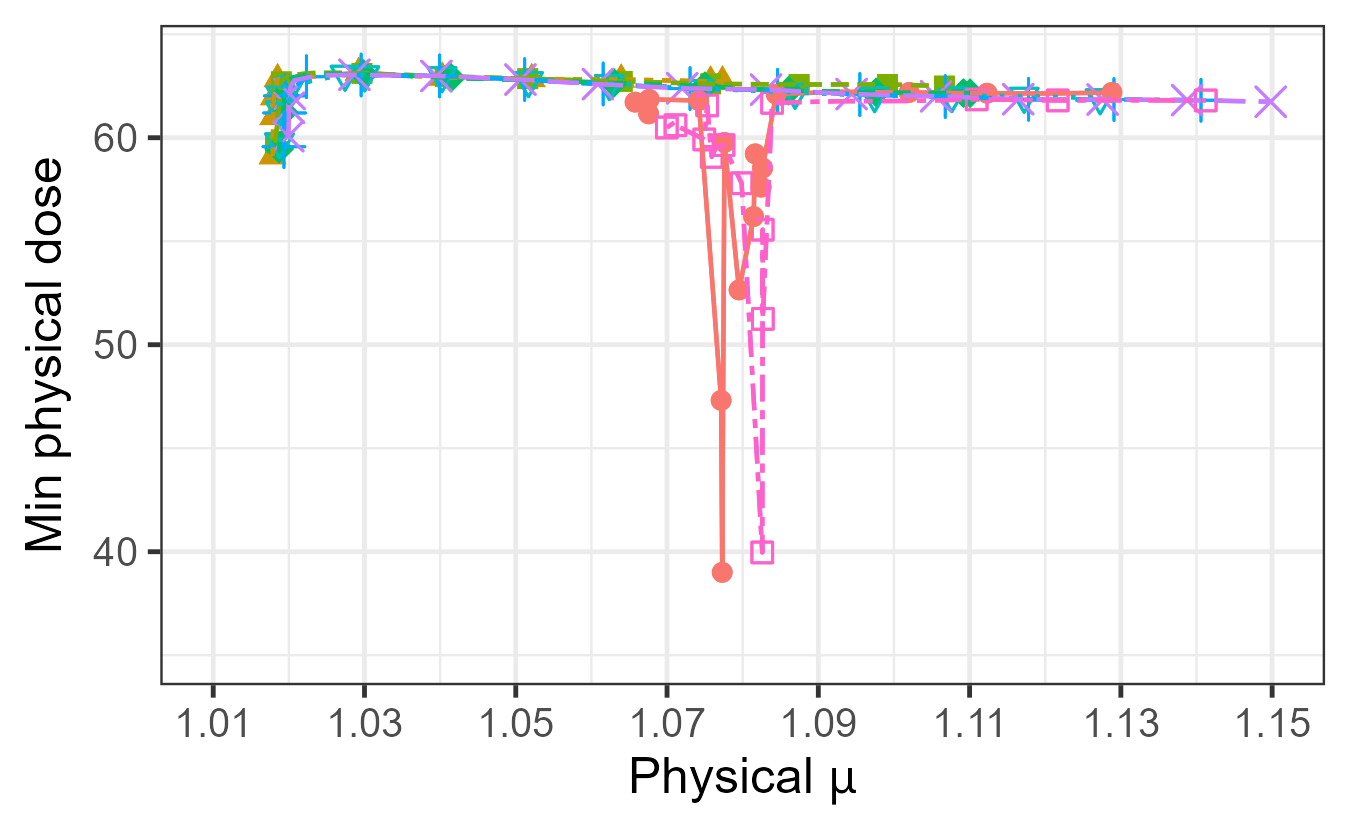}
  \caption{Minimum physical dose}
  \label{fig:physical_delta0.14}
    \end{subfigure}
\begin{subfigure}[t]{0.5\textwidth}
\includegraphics[scale=0.65]{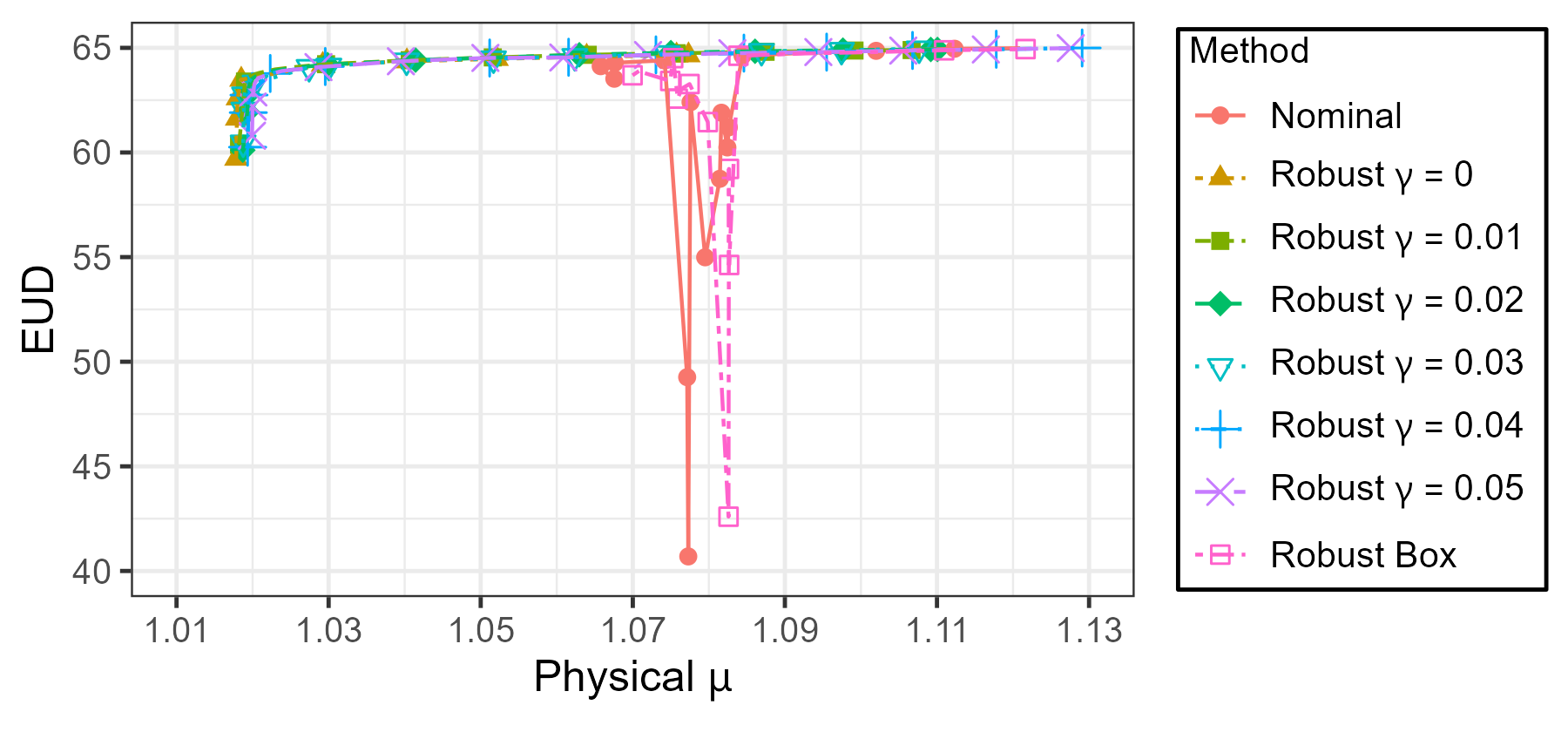}
    \caption{EUD}
   \label{fig:EUD_delta0.14}
   \end{subfigure}
   \caption{Physical performance of solutions optimal
   to~\eqref{prob:SR_robust} with {$\delta=0.14$} and a range of $\gamma$ values, compared to the nominal solution.\label{fig:physical_performance_delta0.14}}
\end{figure}

\begin{figure}[b]
    \centering
     \begin{subfigure}[t]{0.24\textwidth}
    \centering\includegraphics[width=\textwidth]{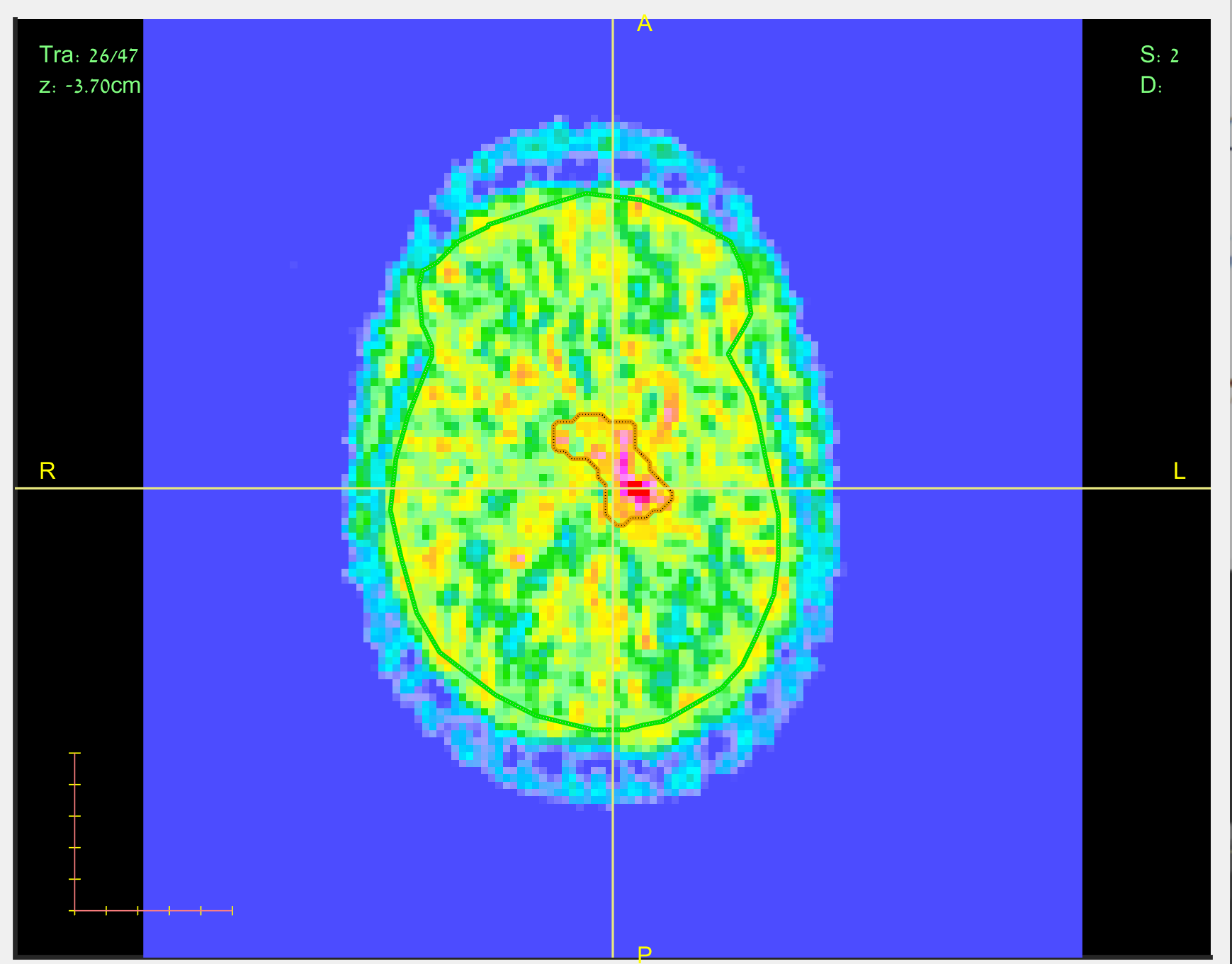}
    \caption{FMISO PET\label{fig:FMISO}}
    \end{subfigure}
    \begin{subfigure}[t]{0.24\textwidth}
    \centering\includegraphics[width=\textwidth]{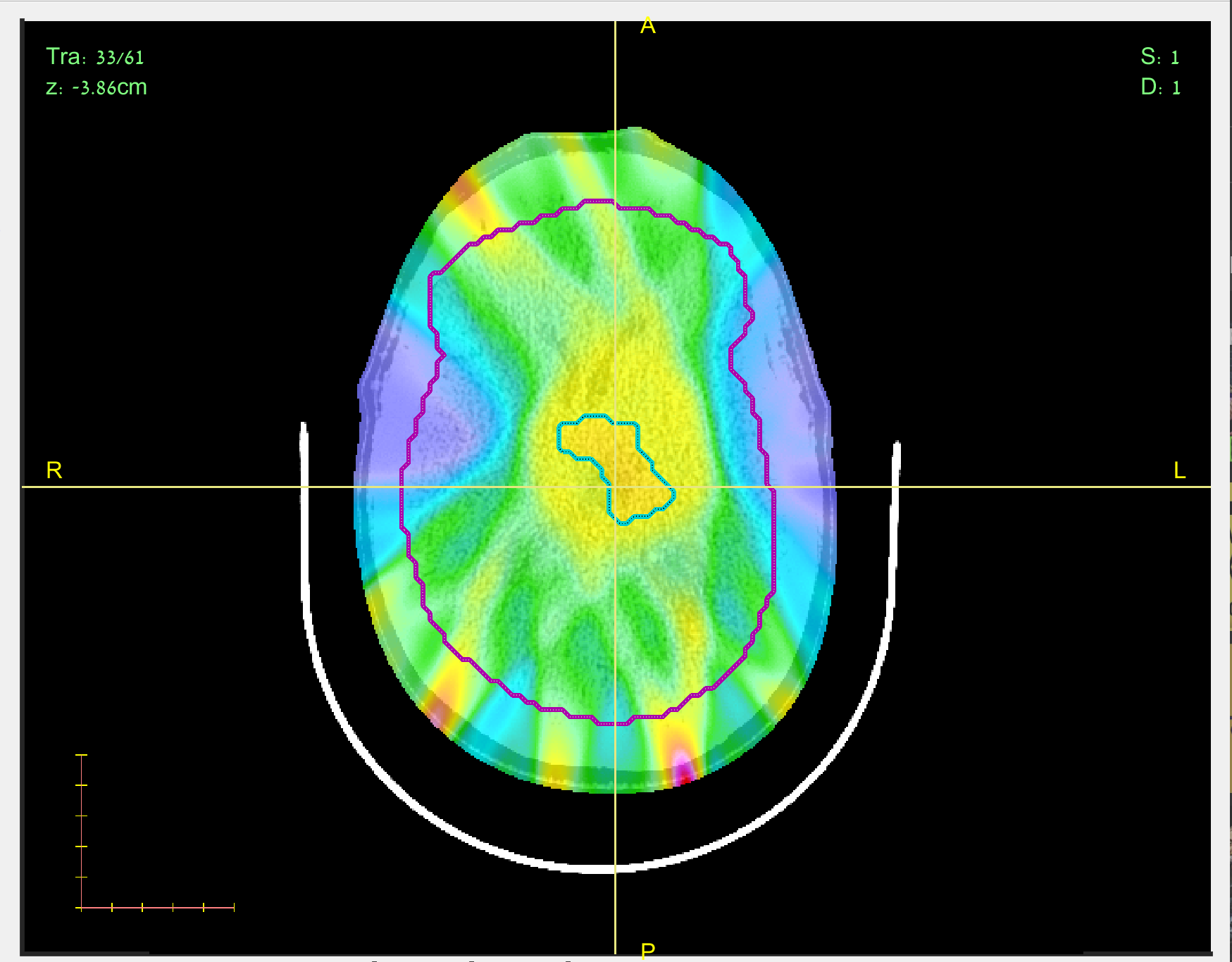}
   \caption{Nominal
   \label{fig:nominal_dose}}
  \end{subfigure} \hskip-4pt\begin{subfigure}[t]{0.24\textwidth}
    \centering\includegraphics[width=\textwidth]{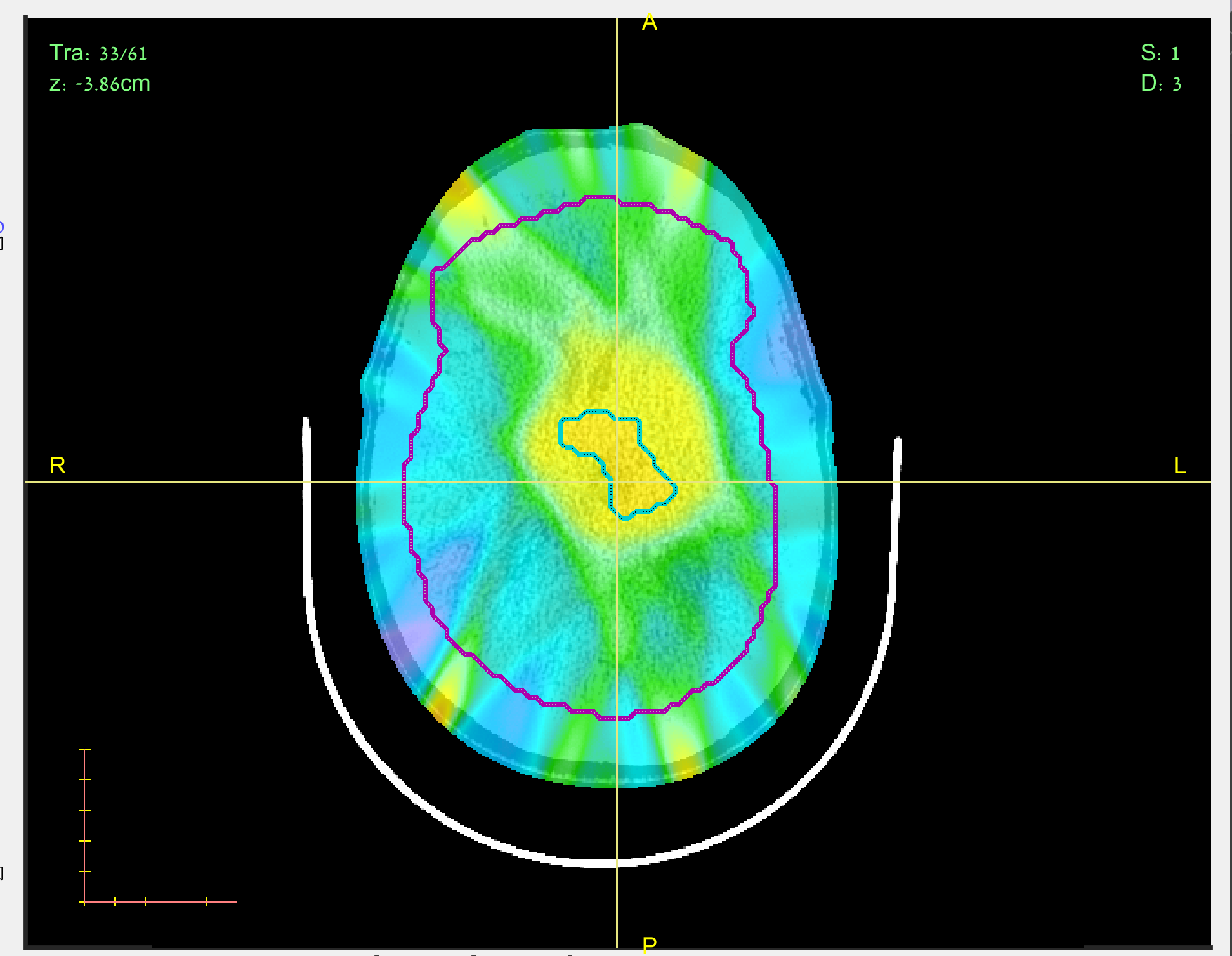}
   \caption{Spatial 
   }\label{fig:robust_spatial_dose}
  \end{subfigure}
    \begin{subfigure}[t]{0.24\textwidth}
    \centering\includegraphics[width=\textwidth]{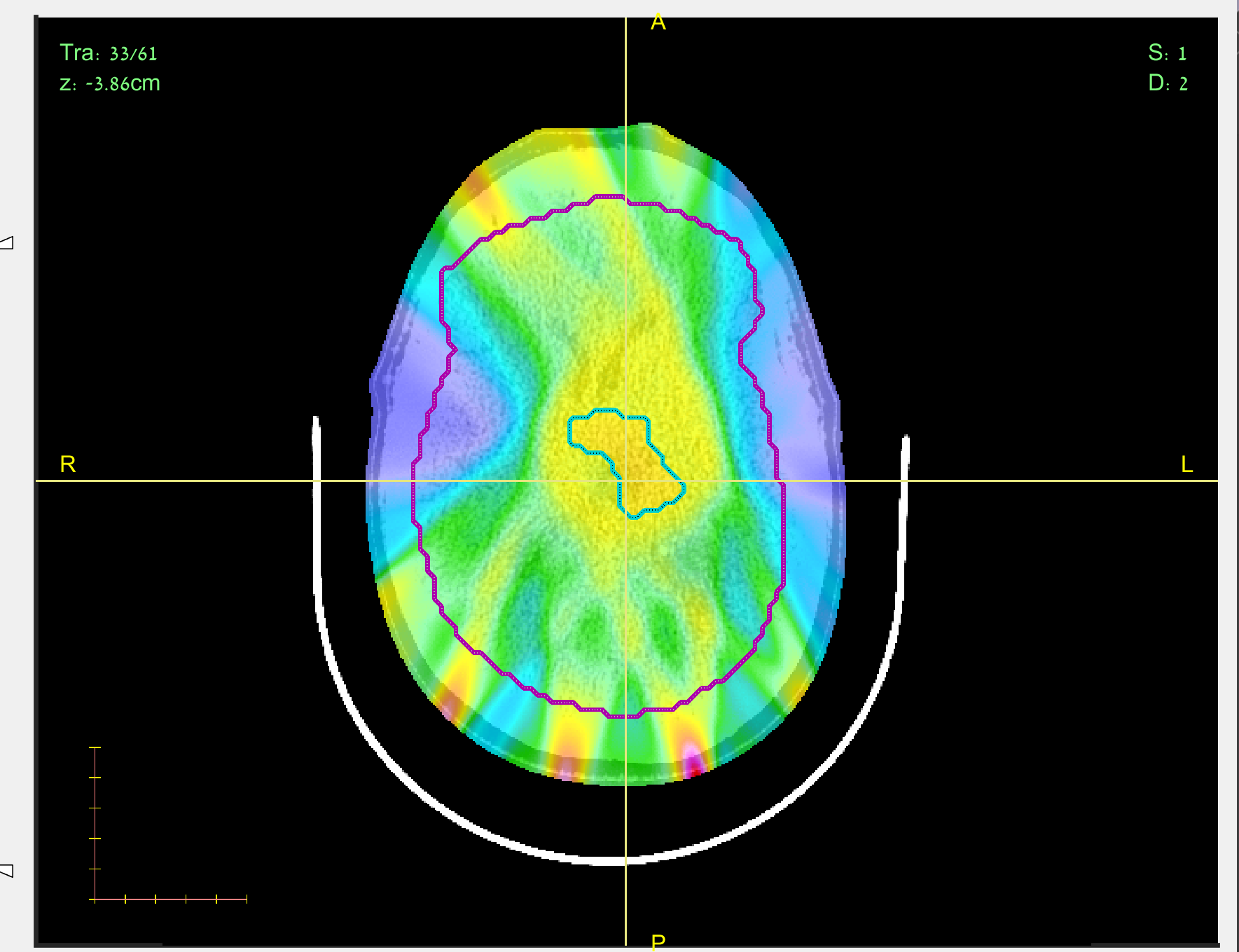}
   \caption{Box 
   }\label{fig:robust_box_dose}
  \end{subfigure}
   \begin{subfigure}[t]{0.02\textwidth}
    \centering\includegraphics[width=1.14\textwidth]{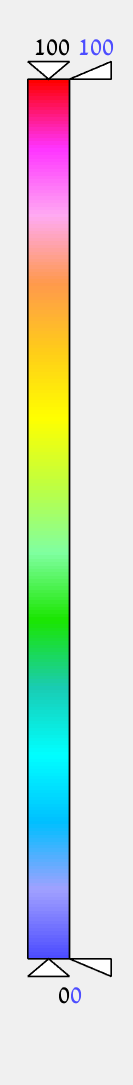}
  \end{subfigure}

   \caption{(\subref{fig:FMISO}) -- PET, pink and red levels  indicating low oxygen. (\subref{fig:nominal_dose})-(\subref{fig:robust_box_dose}) -- physical dose. 
   }
   \label{fig:physical_dose_cerr}  
\end{figure}

\begin{wrapfigure}{R}{0.65\textwidth}
    \centering\includegraphics[width=0.95\textwidth]{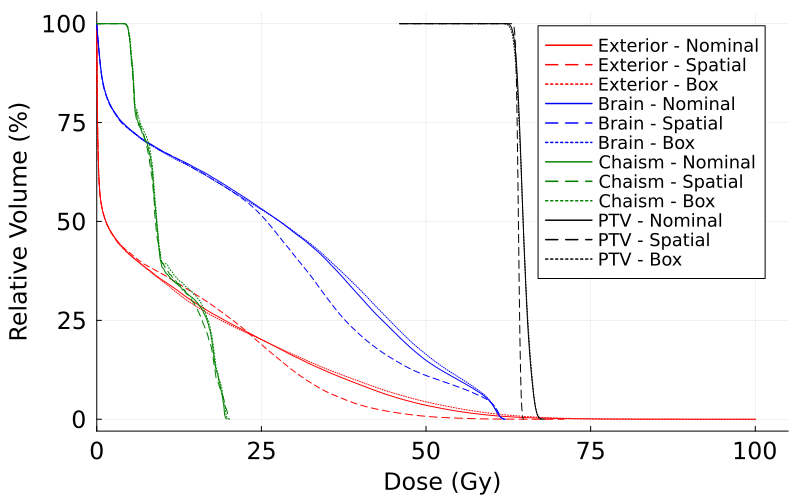}
   \caption{ Comulative dose-volume histograms.\label{fig:cdf_OAR}}
\end{wrapfigure}

Figures~\ref{fig:physical_dose_cerr}-\ref{fig:cdf_OAR} display the physical dose 
obtained by the nominal solution, the robust box solution with {$\delta=0.14$}, and the robust spatial solution of~\eqref{prob:SR_robust} with {$(\delta,\gamma)=(0.14,0.04)$}. For each model, the value of the homogeneity parameter $\mu$  is chosen {so that it yields the highest minimal physical dose while maintaining physical homogeneity less than 1.1 (which is a standard clinical requirement).} 
All three dose images in Figure~{\ref{fig:nominal_dose}}-\ref{fig:robust_box_dose} are aligned with the same PET/CT FMISO scan slice~\ref{fig:FMISO}. 
{The robust box and nominal solutions apply wider physical dose ranges to the PTV, 61.5-67.5 Gy for the robust box and 62-68 Gy for the nominal, compared to the robust spatial solution, with a range of 63-65 Gy.}
Furthermore, the nominal and robust box solutions create dose ``hot spots" of {approximately 100} Gy in the perimeter of the exterior OAR, 
 while the 
 robust spatial model solution  
applies a maximal physical dose of less than {71} Gy {to} this OAR. 
This 
{may be due to the fact that} the nominal and robust box solutions use fewer beams, with some of the beams set to a higher intensity, than the robust spatial solution. Thus, while the robust spatial model retains approximately the same adjusted dose as the nominal solution, 
it 
accounts for the variability and uncertainty in the radiosensitivities. 
It also appears to avoid
excessively high OAR doses. 

\begin{figure}[!b]
    \hskip-45pt     \begin{subfigure}[t]{0.45\textwidth}
    \includegraphics[scale=0.75]{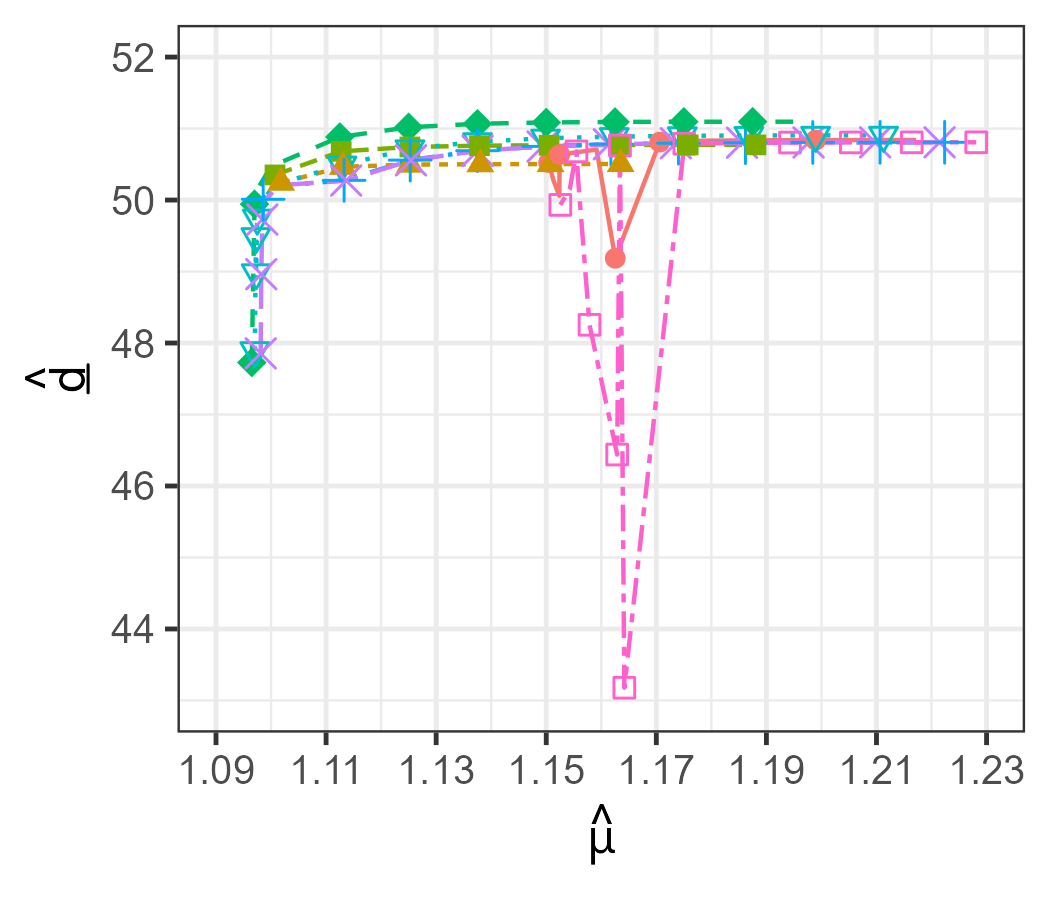}
    \vskip-10pt\caption{``True model" with $\gamma=0.02$}\label{fig:preformance_gamma0.02_delta0.1}
    \end{subfigure}\hskip-15pt
    \begin{subfigure}[t]{0.5\textwidth}
    \includegraphics[scale=0.75]{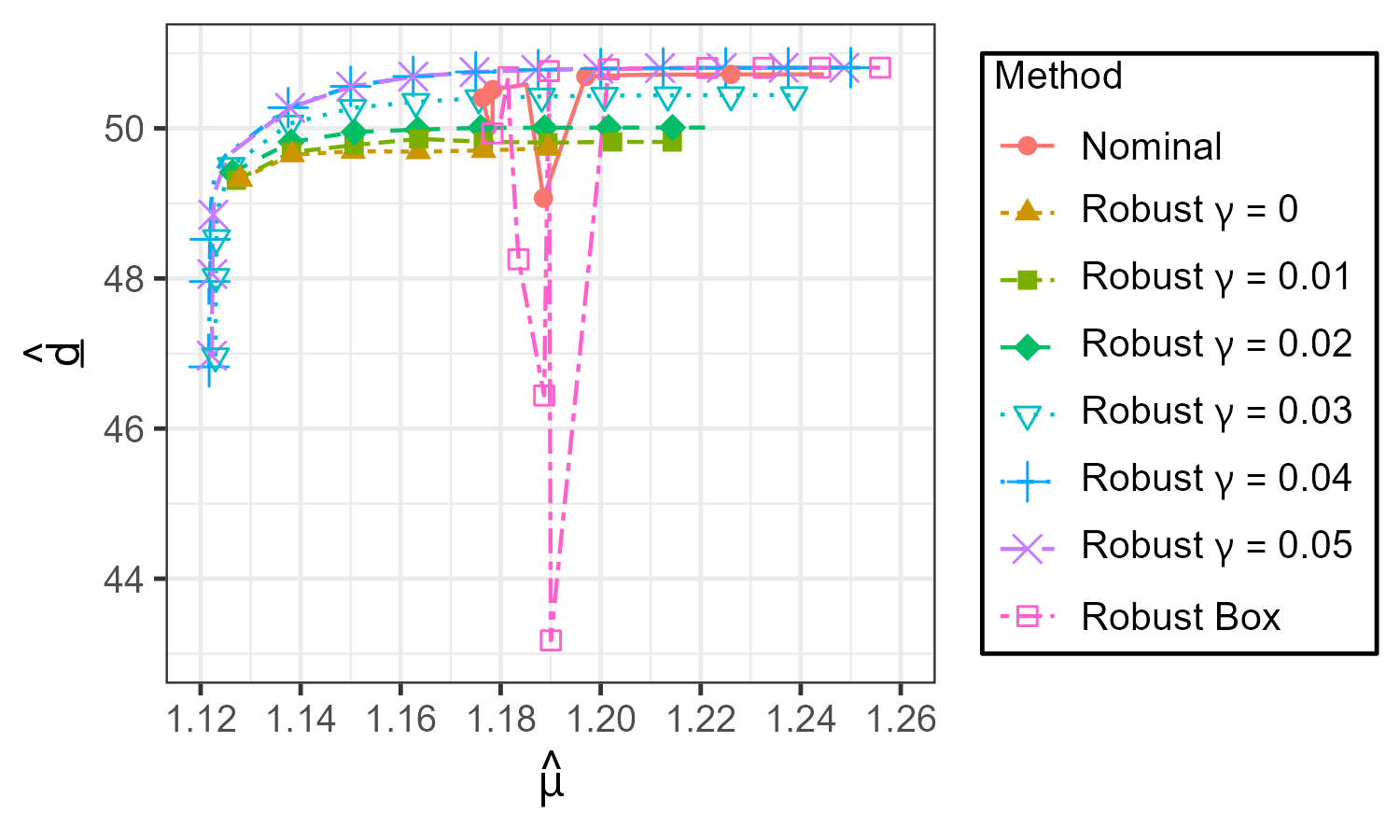}
   \vskip-10pt
   \caption{
   ``True model" with $\gamma=0.04$\label{fig:preformance_gamma0.04_delta0.1}}
  \end{subfigure}
  
   \vskip-5pt
   \caption[]{Worst-case {biologically-adjusted} performance ($\hat{\mu}$, $\hat{\ubar{d}}$) of solutions optimal to {model}~\eqref{prob:SR_robust} with misspecified $\gamma$, under the assumption that the ``true" $\USB$ has {$\delta=0.1$} and $\gamma$ as indicated. The performance of the nominal solution is also shown for comparison.}
   \label{fig:performance_gamma_delta0.1}
\end{figure}

Finally, to evaluate the cost of misspecifying the uncertainty set parameters,  Figures~\ref{fig:performance_gamma_delta0.1} and~\ref{fig:performance_diffdelta} 
depict the worst-case performance of the various solutions under the assumption that the correct uncertainty models {differ with respect to the parameter values assumed in  formulation~\eqref{prob:SR_robust}. 
In Figure~\ref{fig:performance_gamma_delta0.1}} the robust solutions use the ``true value'' of {$\delta=0.1$}, so the misspecification is only in terms of $\gamma$. {Specifically,} $\gamma=0.02$ and   $\gamma=0.04$ in Figure~\ref{fig:preformance_gamma0.02_delta0.1} and Figure~
\ref{fig:preformance_gamma0.04_delta0.1}, respectively. 
Note that while the worst-case performance deteriorates as the value of $\gamma$ used to derive the solution deviates from the true value, the 
dose declines by 
at most (and is generally much less than) {$1.1$} Gy. Importantly, the spatially robust models may achieve homogeneity values that are close to the homogeneity of the correct model, 
improving {the homogeneity} by more than $0.05$ relative to the nominal and robust box solutions. This is evident for most values of $\delta$ (see Figures \ref{fig:preformance_gamma0.02} and \ref{fig:preformance_gamma0.04} in Appendix~\ref{appx:numerical}).  {Moreover, although the robust box and nominal model appear to attain a similar biologically-adjusted dose as the true model for high values of $\hat \mu$, the performance of these models appears much less stable for desirable (lower) values of $\hat\mu$.  Notably, the robust box obtains doses significantly lower than all models considered including the nominal one for some values of $\hat\mu$.} Hence, even when $\gamma$ is misspecified in~\eqref{prob:SR_robust}, it may still yield nearly the same homogeneity, $\hat\mu$, perhaps at a slight cost to $\hat{\ubar d}$. 

\begin{figure}[t]
     \centering
      \begin{subfigure}[t]{0.45\textwidth}  \includegraphics[scale=0.75]{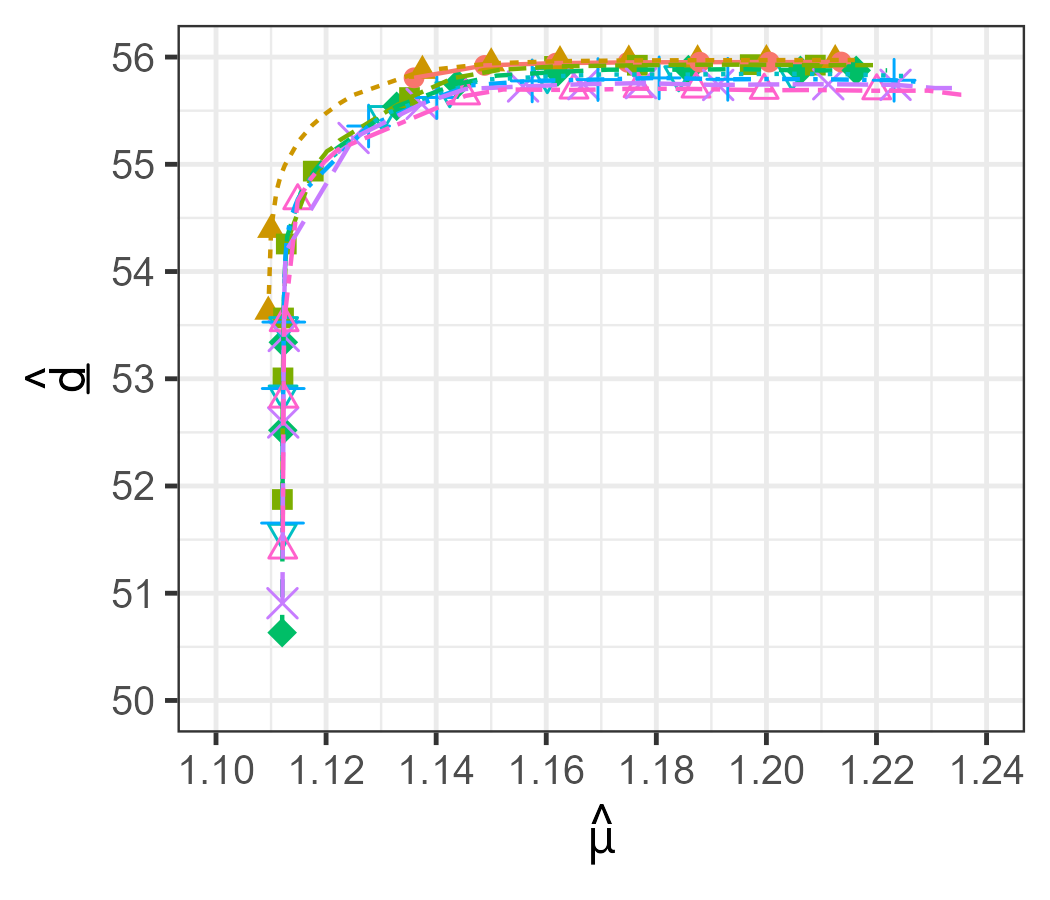}
     \vskip-10pt
   \caption{``True model'' with {$\delta=0.02$}
   }\label{fig:preformance_delta0.02_gamma0.04}
   \end{subfigure}
     \begin{subfigure}[t]{0.5\textwidth}
     \includegraphics[scale=0.75]{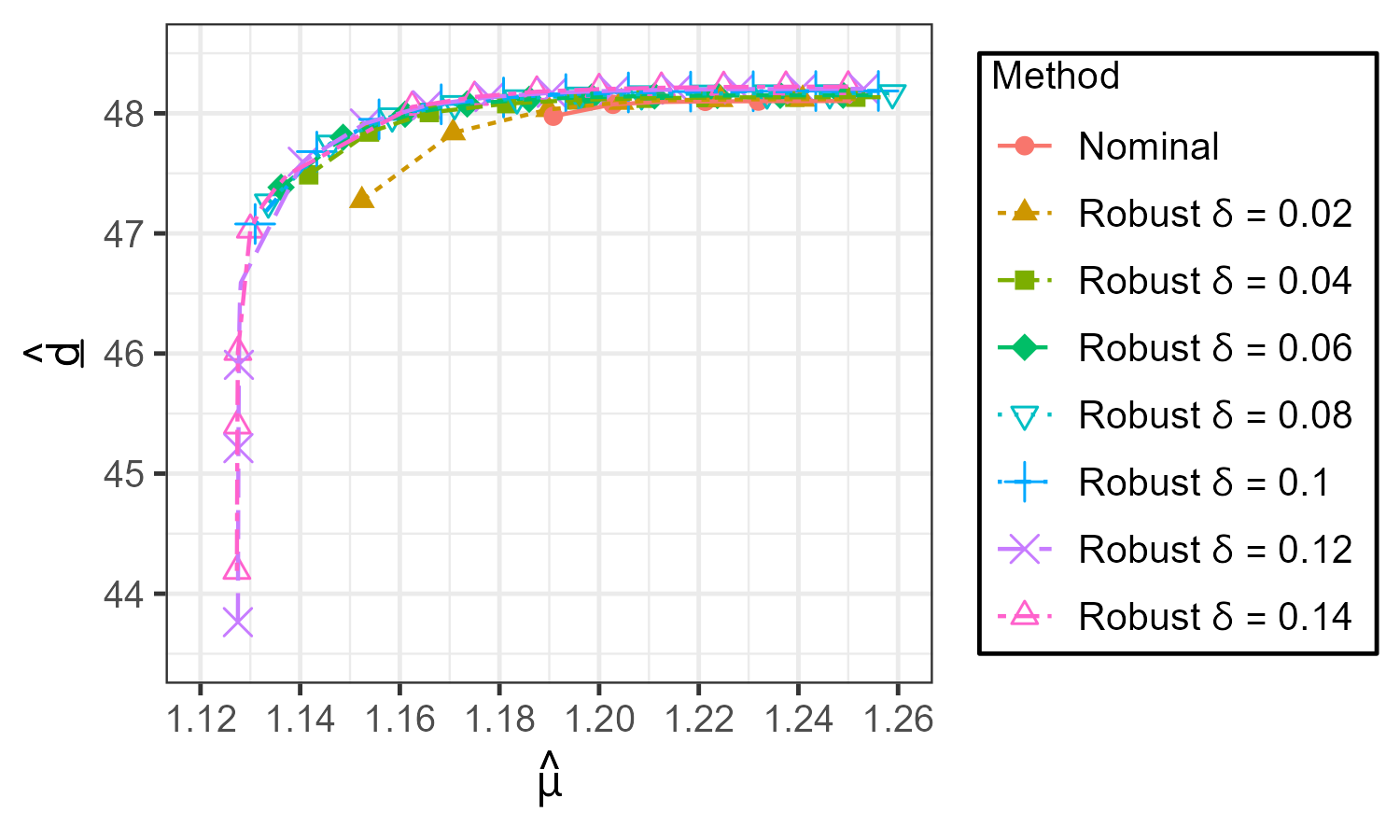}
     \vskip-10pt
   \caption{``True model'' with {$\delta=0.14$}
   }\label{fig:preformance_delta0.14_gamma0.04}
   \end{subfigure}
    \vskip-5pt
  \caption[]{Worst-case {biologically-adjusted} performance ($\hat{\mu}$, $\hat{\ubar{d}}$) of solutions optimal to~\eqref{prob:SR_robust} with misspecified $\delta$,  under the assumption that the ``true'' $\USB$ has $\gamma=0.04$ and $\delta$ as indicated}
   \label{fig:performance_diffdelta}
\end{figure}

Figure~\ref{fig:performance_diffdelta} depicts experimental results for evaluating the effect of misspecifying $\delta$ in the uncertainty model. In this set of experiments, the true uncertainty set, as well as the uncertainty sets assumed in solving  
\eqref{prob:SR_robust}, all share the same value of {$\gamma=0.04$}, but differ in the values of $\delta$. While the misspecification of $\delta$ seems to cause only a modest decrease in the value of $\hat{\ubar{d}}$ (of up to {$0.5$} Gy) compared with the value obtained under the true $\delta$, 
while the homogeneity, $\hat \mu$, appears to be more significantly affected. Specifically, underestimating $\delta$ 
increases $\hat\mu$ of up to $0.03$ (for a similar dose), while overestimating $\delta$ may lead to little or no improvement in the homogeneity (a small decrease in $\hat\mu$). {Further, the $\hat\mu$ values attained for the nominal model are significantly worse (larger than the spatially robust models by up to 0.06) for the higher true value of $\delta=0.14$, compared to being approximately 0.03 larger than all spatially robust models when the true $\delta=0.02$.}

\subsection{Dose-Volume Experiments}
In this section, we conduct experiments using  Algorithm~\ref{alg:PARAMETRICschem}, introduced in Section~\ref{sec:tuning_beta} for handling dose-volume constraints.
r{The experiments in this segments were run using Julia 1.3.0, and Gurobi solver 9.0.0, on an Intel Xeon Gold 6254 CPU with a 3.10GHz base frequency, 24.75 MB cache, and 376 RAM.}

In order to speed up the computation, we may first narrow down the search interval further by performing a preliminary bisection search, and then proceed by running Algorithm~\ref{alg:PARAMETRIC} given this reduced interval. Note that invoking the bisection search may sacrifice some of the theoretical properties guaranteed by the initial lower bound, thus serving only as an effective heuristic. 
Further, in order to deal with possible degeneracy in Algorithm~\ref{alg:PARAMETRICschem} we determine adjacency of basic feasible solutions only based on the $x$ and $y$ variables.  
In these experiments, {we focus on the dose-volume constraint for the brain (OAR 2 of Patient 4)}. Specifically, following the ICRU guidelines~\citep{ICRU2010}, $99\%$ of the brain should receive less than $\bar{d}_2=60$ Gy 
while $\hat{d}_2=62$ Gy; that is, $\alpha=0.01$ in constraint~\eqref{DVCONSTRAINT}. For these experiments we consider both the nominal {and spatially robust settings with the same parameters as in Section~\ref{sec:compperform}}.
Recall that Algorithm~\ref{alg:PARAMETRIC} determines the minimal $\beta$ value for which the solution satisfies this constraint. Intuitively, a small $\beta$ and corresponding penalty term in the objective should correspond to a small change in the objective compared with the non-penalized problem (where $\beta=0$). Thus, the constraint is expected to be satisfied 
with a slight decline in the optimal objective value relative to that achieved when $\beta=0$. 

Figure~\ref{fig:patient4_beta_functions} depicts the value of the adjusted dose $\ubar{d}$, and the $\ell_1$ and $\ell_0$ norms of the deviation vector $y$, as a function of $\beta$, for the different values of $\beta$ obtained by the algorithm for both cases. In order to run the algorithm, we first computed $\bar{\beta}\approx {2.22}\times 10^{-2}$ {for the nominal case, and 
$\bar{\beta}\approx {2.58}\times 10^{-2}$ for the specially robust case. {We used $\ubar{\beta}=0$ in both cases, while the true lower bounds are {$\ubar{\beta}\approx 1.73\times 10^{-6}$} for the nominal setting and {$\ubar{\beta}\approx 1.36\times 10^{-6}$} for the robust setting.}}
The algorithm determined that {$\beta^*=r{2.72}\times 10^{-5}$ and $\beta^*={2.4}\times 10^{-5}$ are the smallest $\beta$ values that satisfy $\norm{y}_0\leq \lfloor\alpha|H_2|\rfloor=13,601$ for the nominal and the spatially robust case, respectively (thus satisfying the dose-volume constraint). 

\begin{figure}[t]
    \centering
        \begin{subfigure}[b]{0.45\textwidth}
    \includegraphics[scale=0.55]{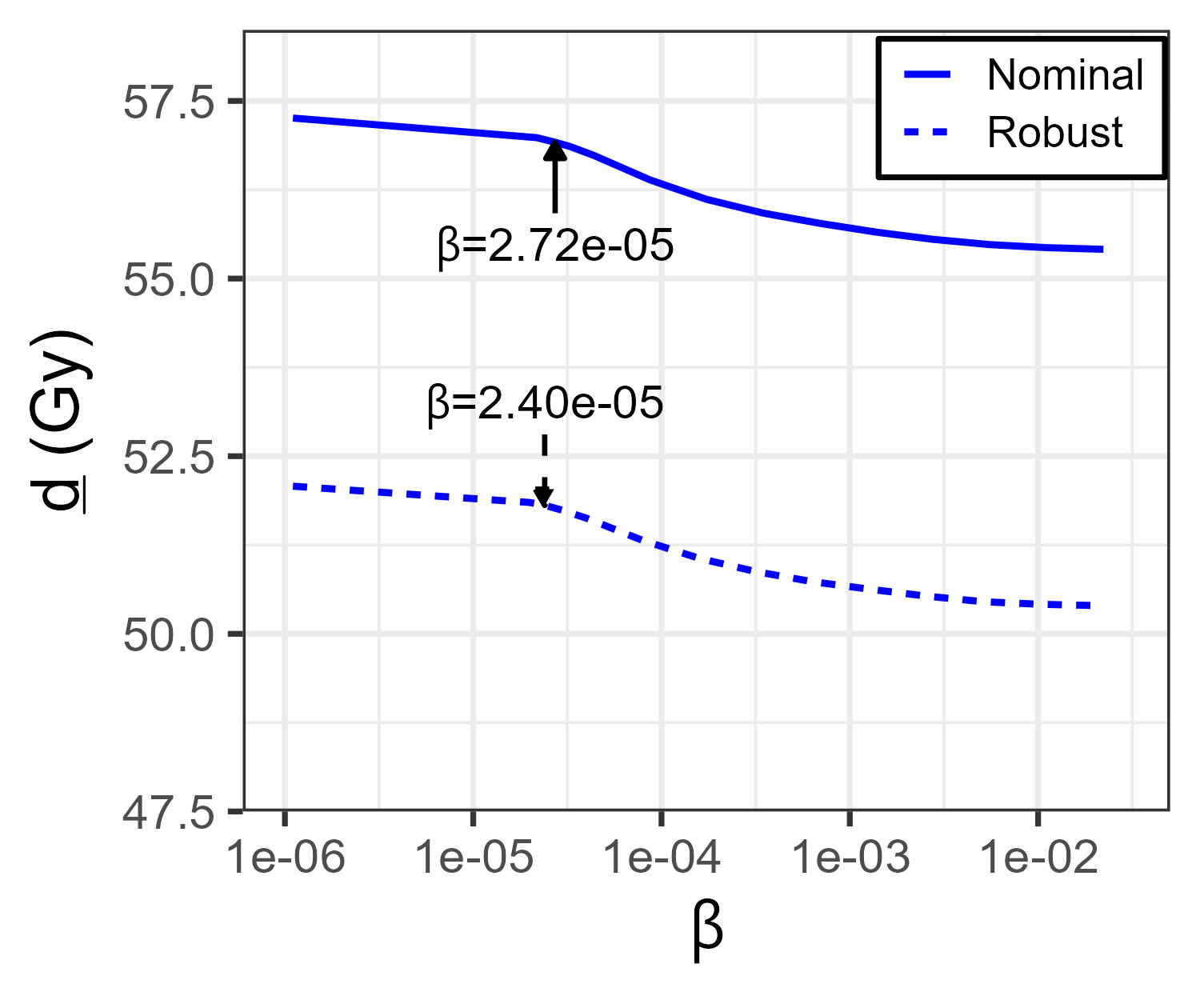}
    \caption{$\ubar{d}$ vs. $\beta$}
    \end{subfigure}
     \begin{subfigure}[b]{0.50\textwidth}
    \includegraphics[scale=0.55]{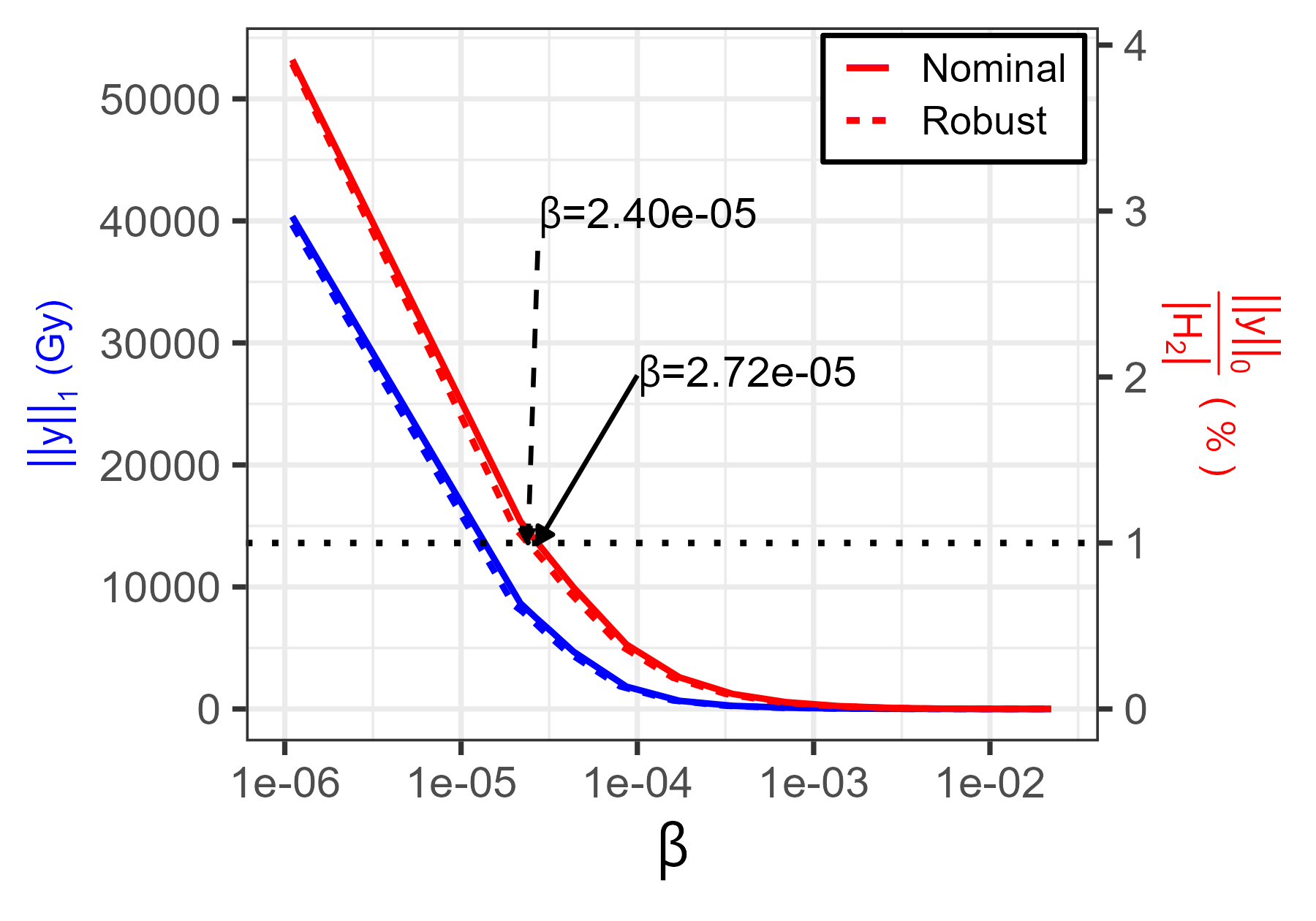}
        \caption{$\vectornorm{y}_1$ and  $\vectornorm{y}_0/\card{H_2}$   vs. $\beta$}
    \end{subfigure}
    \caption{
    	 {Minimum biologically-adjusted dose} $\underbar d$ and 
    	deviation {vector} $y$ norms 
    	vs. $\beta$ values enumerated by Algorithm~\ref{alg:PARAMETRICschem} for the nominal and spatially robust settings. \label{fig:patient4_beta_functions}}
\end{figure}

{We compared the solutions obtain by Algorithm 5 in both of the spatially robust and nominal settings, with a solution obtained 
from problem~\eqref{prob:general_robust} with an additional CVAR constraint, as suggested by \cite{Romeijn2003}. 
Table~\ref{tbl:cmp_CVaR} shows these results, 
comparing also with the unresricted case, where the dose-volume constraints are not imposed (
having $\beta=0$ in \eqref{prob:PEN_ROB_FORM}).}

\begin{table}
\caption{\label{tbl:cmp_CVaR} Comparison of Algorithm~\ref{alg:PARAMETRIC} with adding a CVaR constraint}
    \centering
    {
    \begin{tabular}{l|l|r|r|r|r}
    \textbf{Setting} & \textbf{Method} & \textbf{$\ubar{d}$} & \textbf{EUD} & \textbf{\% } & \multicolumn{1}{l}{\textbf{Running}}\\
    &&&&
    \textbf{Violation}
    & \multicolumn{1}{l}{\textbf{Time (sec)}}\\
    \hline
    \hline
    \multirow{3}{*}{Nominal}
    & Unrestricted & 57.26 & 64.85 & 3.91\%  & 2904 \\
    &Algorithm~\ref{alg:PARAMETRIC} & 56.92 & 64.50 & 1.00\% & 55784\\
    & CVaR & 56.46 & 64.03& 0.41\% & 87192 \\
    \hline
    \multirow{3}{50pt}{Robust} & Unrestricted & 52.08 & 64.73 & 3.89\%  &  4327\\
    &Algorithm~\ref{alg:PARAMETRIC} & 51.81 & 64.33 & 1.00\% & 68980\\
    & CVaR & 51.38 & 63.80& 0.41\% & 105366 
    \end{tabular}}
\end{table}
{Algorithm~\ref{alg:PARAMETRIC} only reduces the dose by $0.59\%$ and $0.52\%$ compared to the unrestricted case, in the nominal and robust settings, respectively. Also, note that CVaR 
results in lower values of both minimal biologically-adjusted dose and EUD, compared 
with Algorithm~\ref{alg:PARAMETRIC}.
This is 
especially evident in the nominal setting. This is due to the CVaR constraint 
being a safe approximation of the dose-volume constraint, 
resulting in more conservative solutions to the planning problem. Indeed, the percent of brain
voxels exceeding the $\bar d_2=60$Gy bound is approximately $0.4\%$ using the CVaR approach, which is lower than the required $1\%$. Finally,  the CVaR constraint requires a variable for each of the brain voxels, 
which may be computationally intensive; 
without initializing to the unrestricted solution, we found that CVaR would not solve even after 72 hours.  With this particular initialization, we are able to solve for CVaR but it takes about 1.5 times as long as Algorithm~\ref{alg:PARAMETRIC}, as displayed in Table~\ref{tbl:cmp_CVaR}. 
Thus, when Algorithm~\ref{alg:PARAMETRIC} can be applied, it provides an alternative to the CVaR approach that is both superior in terms of performance and in terms of the running time.
} 


\section{Conclusions and Future Work}

We proposed a novel spatially based uncertainty set to model \shimrit{uncertainty in tumor radiosensitivity of radiation treatment based on measured  biomarkers. }
\shimrit{Naively} incorporating this uncertainty set in robust voxel-based \shimrit{FMO} 
models \shimrit{would} lead {to an unmanageable increase in the already large problem size}. \shimrit{Alternatively, we compactly reformulate the robust problem by proving that at most a few uncertainty set extreme points need to be considered for each constraint.} 
While theoretically tractable, \shimrit{the new compact formulation may still contain over a billion constraints}. We first show that although the compact formulation cannot be solved directly, it can be solved within a reasonable running time \noamr{and consumption of memory,} by carefully generating constraints as needed. Extensive experiments demonstrate advantages of the   proposed uncertainty set in terms of the adjusted homogeneity-dose tradeoff over the nominal and robust box-uncertain models.

\noamr{In this work, we also develop a method for satisfying a single dose-volume constraint  
that outperforms the
standard CVaR modeling approach to this problem.
Although handling dose volume constraints remains computationally challenging
for clinical use, this can be addressed by future research focused on voxel aggregation techniques for solving large scale instances within reasonable running times.}  

This work has focused on spatial aspects of biomarker related uncertainty in a voxel-level planning model (also known as ``dose painting'').
In future work, we would like to extend the proposed model to include temporal aspects,  taking into account fractionation of treatments as well as a possible additional image scan midtreatment. This leads to a challenging two-stage adaptive robust problem, \shimrit{where the spatial uncertainty changes over time}. Finally, it is worth noting that the study of robust and adaptive planning, under the condition that \noamr{the uncertainty of neighboring elements} \shimrit{is connected}, has application beyond radiation therapy. Communicable disease,  traffic congestion, and even agriculture are all examples of situations where uncertainty  among neighbors in space and time \shimrit{is connected}. 



\section*{Acknowledgments} The authors are grateful to Aditya P. Apte for his help in using the CERR software to process and export the imaging datasets used in this paper. Noam Goldberg acknowledges the computational resources provided by the Data Science Institute at Bar-Ilan University. 

\section*{Notes on the Contributors}

\paragraph{Noam Goldberg} is a senior lecturer in the Department of Management at Bar-Ilan University and an incoming associate professor at the Department of Industrial Engineering and Management at Ben-Gurion University. He completed a PhD in operations research at Rutgers University (RUTCOR), New Brunswick, NJ, in 2010. His graduate studies followed an early non-academic system and software engineering career in telecommunications. His PhD research on sparse data classification models was awarded a first prize in a contest of the INFORMS NJ chapter in 2009. He held several postdoctoral positions during 2009-2013 at the Technion-Israel Institute of Technology, Argonne National Laboratory and Carnegie Mellon University. He joined Bar-Ilan University in 2013 as a tenure-track lecturer, where he became a tenured senior lecturer since 2018. In fall 2019 he was a visiting associate professor at the Rutgers Business School. He has more than 30 peer reviewed publications including top journals in optimization and conference proceedings in computer science. In current work he continues to be motivated by high-impact 
applications of OR, data driven optimization, game models and solution methods.

\paragraph{Mark Langer, MD} is Professor of Clinical Radiation Oncology at Indiana University School of Medicine,
Indianapolis, IN USA. His research interests lie in mathematical programming and optimization for
radiation treatment planning and design, and clinical strategies to improve outcomes in head and neck
cancers. He is the recipient of grants from the NIH, NSF and the Indiana 21 st Century Fund and has
served on review panels for the NIH, NSF, BSF and the DOD. He served as chair for the Operations
Research and Radiotherapy Collaborative Working Group (ORART) sponsored by the NIH and was a
subawardee on the ORART Collaborative Working Group’s Tool Kit program funded by the NSF. He has
authored publications in Ann. Operations Research, Mathematical Programming B, Linear Algebra and
its Applications, J. Operat. Res. Soc., Journal of Mechanisms and Robotics, Journal of Applied Clinical
Medical Physics, Dysphagia, Int. J. Radiat. Oncol. Biol. Phys., Radiotherapy and Oncology, IIE
Transactions on Health Care Systems Engineering, Phys. Med. Biol, and Medical Physics.

\paragraph{Shimrit Shtern} is a senior lecturer in the Faculty of Data and Decision Sciences at the Technion - Israel Institute of Technology. She completed a PhD in optimization at the Technion in 2015. She was post-doctoral fellow at the Operation Research Center at MIT in 2015-2017 and joined the Technion as a faculty member in 2018. Her research focuses on theory and applications of optimization under uncertainty as well as development and convergence analysis of first order methods for structured optimization problems. Her work has been published in top operation research and optimization journals including Management Science, Operations Research, Mathematics of Operations Research, SIAM Journal on Optimization, and Mathematical Programming.  

\section*{Data Availability Statement}
All data used in this paper is based on data available from public repositories whose references are indicated and with processing as described in Section~\ref{sec:computational}. The processed data and code are available at \url{https://github.com/ShimShtern/RobustRTPStatic}. 

\bibliographystyle{apalike} 
\bibliography{refs2}

\begin{thebibliography}{}

\bibitem[ICR, 2010]{ICRU2010}
 (2010).
\newblock {Report 83: Prescribing, Recording, and Reporting Photon-Beam
  Intensity-Modulated Radiation Therapy (IMRT)}.
\newblock {\em Journal of the International Commission on Radiation Units and
  Measurements}, 10(1):1--106.

\bibitem[Allen~Li et~al., 2012]{Allen2012}
Allen~Li, X., Alber, M., Deasy, J.~O., Jackson, A., Ken~Jee, K.-W., Marks,
  L.~B., Martel, M.~K., Mayo, C., Moiseenko, V., Nahum, A.~E., et~al. (2012).
\newblock The use and qa of biologically related models for treatment planning:
  Short report of the {{TG-166}} of the therapy physics committee of the
  {{AAPM}}.
\newblock {\em Medical physics}, 39(3):1386--1409.

\bibitem[Ben-Tal et~al., 2009]{BenTal09}
Ben-Tal, A., El~Ghaoui, L., and Nemirovski, A. (2009).
\newblock {\em Robust Optimization}.
\newblock Princeton University Press.

\bibitem[Ben-Tal et~al., 2004]{Ben-Tal04}
Ben-Tal, A., Goryashko, A., Guslitzer, E., and Nemirovski, A. (2004).
\newblock Adjustable robust solutions of uncertain linear programs.
\newblock {\em Mathematical Programming}, 99(2):351--376.

\bibitem[Bentzen and Gregoire, 2011]{bentzen2011}
Bentzen, S.~M. and Gregoire, V. (2011).
\newblock Molecular imaging--based dose painting: a novel paradigm for
  radiation therapy prescription.
\newblock In {\em Seminars in radiation oncology}, volume~21, pages 101--110.
  Elsevier.

\bibitem[Bertsimas and Sim, 2004]{Bertsimas2004}
Bertsimas, D. and Sim, M. (2004).
\newblock The price of robustness.
\newblock {\em Operations research}, 52(1):35--53.

\bibitem[Bortfeld et~al., 2008]{Bortfeld2008}
Bortfeld, T., Chan, T.~C., Trofimov, A., and Tsitsiklis, J.~N. (2008).
\newblock Robust management of motion uncertainty in intensity-modulated
  radiation therapy.
\newblock {\em Operations Research}, 56(6):1461--1473.

\bibitem[Chassein et~al., 2019]{Chassein2019}
Chassein, A., Dokka, T., and Goerigk, M. (2019).
\newblock Algorithms and uncertainty sets for data-driven robust shortest path
  problems.
\newblock {\em European Journal of Operational Research}, 274(2):671--686.

\bibitem[Choi and Deasy, 2002]{choi2002generalized}
Choi, B. and Deasy, J.~O. (2002).
\newblock The generalized equivalent uniform dose function as a basis for
  intensity-modulated treatment planning.
\newblock {\em Physics in Medicine \& Biology}, 47(20):3579.

\bibitem[Chu et~al., 2006]{Chu2006}
Chu, M., Zinchenko, Y., Henderson, S.~G., and Sharpe, M.~B. (2006).
\newblock Robust optimization for intensity modulated radiation therapy
  treatment planning under uncertainty.
\newblock {\em Physics in Medicine \& Biology}, 50(23):5463.

\bibitem[Clark et~al., 2013]{Clark2013}
Clark, K., Vendt, B., Smith, K., Freymann, J., Kirby, J., Koppel, P., Moore,
  S., Phillips, S., Maffitt, D., Pringle, M., Tarbox, L., and Prior, F. (2013).
\newblock The cancer imaging archive {{(TCIA)}}: Maintaining and operating a
  public information repository.
\newblock {\em Journal of Digital Imaging}, 26:1045--1057.

\bibitem[Corazza, 1999]{Corazza1999}
Corazza, P. (1999).
\newblock Introduction to metric-preserving functions.
\newblock {\em The American mathematical monthly}, 106(4):309--323.

\bibitem[Craft et~al., 2014]{Craft2014}
Craft, D., Bangert, M., Long, T., Papp, D., and Unkelbach, J. (2014).
\newblock {Shared data for intensity modulated radiation therapy (IMRT)
  optimization research: the CORT dataset}.
\newblock {\em GigaScience}, 3(1).

\bibitem[Deasy, 1997]{deasy1997multiple}
Deasy, J.~O. (1997).
\newblock Multiple local minima in radiotherapy optimization problems with
  dose--volume constraints.
\newblock {\em Medical physics}, 24(7):1157--1161.

\bibitem[Donoho and Elad, 2003]{Donoho2003}
Donoho, D.~L. and Elad, M. (2003).
\newblock Optimally sparse representation in general (nonorthogonal)
  dictionaries via l1 minimization.
\newblock {\em Proceedings of the National Academy of Sciences},
  100(5):2197--2202.

\bibitem[Ferris et~al., 2003]{ferris2003}
Ferris, M.~C., Lim, J., and Shepard, D.~M. (2003).
\newblock Radiosurgery treatment planning via nonlinear programming.
\newblock {\em Annals of Operations Research}, 119(1):247--260.

\bibitem[Hill et~al., 2015]{hill2015hypoxia}
Hill, R.~P., Bristow, R.~G., Fyles, A., Koritzinsky, M., Milosevic, M., and
  Wouters, B.~G. (2015).
\newblock Hypoxia and predicting radiation response.
\newblock In {\em Seminars in radiation oncology}, volume~25, pages 260--272.
  Elsevier.

\bibitem[Jeyakumar et~al., 2023]{jeyakumar2023}
Jeyakumar, V., Li, G., Woolnough, D., and Wu, H. (2023).
\newblock Affinely adjustable robust optimization for radiation therapy under
  evolving data uncertainty via semi-definite programming.
\newblock {\em Optimization}, pages 1--26.

\bibitem[Jones et~al., 2001]{jones2001use}
Jones, L., Hoban, P., and Metcalfe, P. (2001).
\newblock The use of the linear quadratic model in radiotherapy: a review.
\newblock {\em Australasian Physics \& Engineering Sciences in Medicine},
  24:132--146.

\bibitem[Kataria et~al., 2012]{kataria2012}
Kataria, T., Sharma, K., Subramani, V., Karrthick, K., and Bisht, S.~S. (2012).
\newblock Homogeneity index: An objective tool for assessment of conformal
  radiation treatments.
\newblock {\em Journal of medical physics/Association of Medical Physicists of
  India}, 37(4):207.

\bibitem[Khairi et~al., 2021]{Khairi2021}
Khairi, Y., Omer, H., Sulieman, A., Deiab, N., Mokhtar, M.~H., Abolaban, F.~A.,
  Alkhorayef, M., and Bradley, D. (2021).
\newblock Radiation dose homogeneity and critical organs in radiotherapy
  treatment of prostate cancer.
\newblock {\em Radiation Physics and Chemistry}, 178:109000.

\bibitem[Li et~al., 2015]{Li2015}
Li, H., Bissonnette, J.-P., Purdie, T., and Chan, T.~C. (2015).
\newblock Robust pet-guided intensity-modulated radiation therapy.
\newblock {\em Medical physics}, 42(8):4863--4871.

\bibitem[Lindblom et~al., 2017]{lindblom2017defining}
Lindblom, E., Dasu, A., Uhrdin, J., Even, A., van Elmpt, W., Lambin, P.,
  Wers{\"a}ll, P., and Toma-Dasu, I. (2017).
\newblock Defining the hypoxic target volume based on positron emission
  tomography for image guided radiotherapy--the influence of the choice of the
  reference region and conversion function.
\newblock {\em Acta Oncologica}, 56(6):819--825.

\bibitem[Lorca and Sun, 2014]{Lorca2014}
Lorca, A. and Sun, X.~A. (2014).
\newblock Adaptive robust optimization with dynamic uncertainty sets for
  multi-period economic dispatch under significant wind.
\newblock {\em IEEE Transactions on Power Systems}, 30(4):1702--1713.

\bibitem[Lubin and Dunning, 2015]{Lubin2015}
Lubin, M. and Dunning, I. (2015).
\newblock Computing in operations research using julia.
\newblock {\em INFORMS Journal on Computing}, 27(2):238--248.

\bibitem[Luedtke et~al., 2010]{luedtke2010integer}
Luedtke, J., Ahmed, S., and Nemhauser, G.~L. (2010).
\newblock An integer programming approach for linear programs with
  probabilistic constraints.
\newblock {\em Mathematical programming}, 122(2):247--272.

\bibitem[McKeown, 2014]{Mckeown2014defining}
McKeown, S. (2014).
\newblock Defining normoxia, physoxia and hypoxia in tumours—implications for
  treatment response.
\newblock {\em The British journal of radiology}, 87(1035):20130676.

\bibitem[McMahon, 2018]{mcmahon2018linear}
McMahon, S.~J. (2018).
\newblock The linear quadratic model: usage, interpretation and challenges.
\newblock {\em Physics in Medicine \& Biology}, 64(1):01TR01.

\bibitem[Niemierko, 1997]{niemierko1997reporting}
Niemierko, A. (1997).
\newblock Reporting and analyzing dose distributions: a concept of equivalent
  uniform dose.
\newblock {\em Medical physics}, 24(1):103--110.

\bibitem[Nohadani and Roy, 2017]{Nohadani2017}
Nohadani, O. and Roy, A. (2017).
\newblock Robust optimization with time-dependent uncertainty in radiation
  therapy.
\newblock {\em IISE Transactions on Healthcare Systems Engineering},
  7(2):81--92.

\bibitem[Pflugfelder et~al., 2008]{Pflugfelder2008}
Pflugfelder, D., Wilkens, J., and Oelfke, U. (2008).
\newblock Worst case optimization: a method to account for uncertainties in the
  optimization of intensity modulated proton therapy.
\newblock {\em Physics in Medicine \& Biology}, 53(6):1689.

\bibitem[Preciado-Walters et~al., 2006]{PreciadoWalters2006}
Preciado-Walters, F., Langer, M.~P., Rardin, R.~L., and Thai, V. (2006).
\newblock Column generation for {{IMRT}} cancer therapy optimization with
  implementable segments.
\newblock {\em Annals of Operations Research}, 148:65--79.

\bibitem[Preciado-Walters et~al., 2004]{PreciadoWalters2004}
Preciado-Walters, F., Rardin, R., Langer, M., and Thai, V. (2004).
\newblock A coupled column generation, mixed integer approach to optimal
  planning of intensity modulated radiation therapy for cancer.
\newblock {\em Mathematical Programming}, 101.

\bibitem[Romeijn et~al., 2006]{Romeijn2006}
Romeijn, H.~E., Ahuja, R.~K., Dempsey, J.~F., and Kumar, A. (2006).
\newblock A new linear programming approach to radiation therapy treatment
  planning problems.
\newblock {\em Operations Research}, 54:201--216.

\bibitem[Romeijn et~al., 2003]{Romeijn2003}
Romeijn, H.~E., Ahuja, R.~K., Dempsey, J.~F., Kumar, A., and Li, J.~G. (2003).
\newblock A novel linear programming approach to fluence map optimization for
  intensity modulated radiation therapy treatment planning.
\newblock {\em Physics in Medicine \& Biology}, 48(21):3521.

\bibitem[Roy et~al., 2022]{roy2022}
Roy, A., Dabadghao, S.~S., and Marandi, A. (2022).
\newblock Value of intermediate imaging in adaptive robust radiotherapy
  planning to manage radioresistance.
\newblock {\em Annals of Operations Research}, pages 1--22.

\bibitem[Saka et~al., 2014]{Saka2014}
Saka, B., Rardin, R.~L., and Langer, M.~P. (2014).
\newblock Biologically guided intensity modulated radiation therapy planning
  optimization with fraction-size dose constraints.
\newblock {\em Journal of the Operational Research Society}, 65:557--571.

\bibitem[Shang et~al., 2021]{Shang2021}
Shang, H., Pu, Y., Chen, Z., Wang, X., Yuan, C., Jin, X., and Liu, C. (2021).
\newblock Impact of multiple beams on plan quality, linear energy transfer
  distribution, and plan robustness of intensity modulated proton therapy for
  lung cancer.
\newblock {\em ACS Sensors}, 6(2):408--417.
\newblock PMID: 33125211.

\bibitem[Ten~Eikelder et~al., 2020]{Eikelder2020}
Ten~Eikelder, S., Ferjan{\v{c}}i{\v{c}}, P., Ajdari, A., Bortfeld, T., den
  Hertog, D., and Jeraj, R. (2020).
\newblock Optimal treatment plan adaptation using mid-treatment imaging
  biomarkers.
\newblock {\em Physics in Medicine \& Biology}, 65(24):245011.

\bibitem[Titz and Jeraj, 2008]{Titz2008}
Titz, B. and Jeraj, R. (2008).
\newblock An imaging-based tumour growth and treatment response model:
  investigating the effect of tumour oxygenation on radiation therapy response.
\newblock {\em Physics in Medicine \& Biology}, 53(17):4471.

\bibitem[Toma-Dasu et~al., 2012]{toma2012}
Toma-Dasu, I., Uhrdin, J., Antonovic, L., Dasu, A., Nuyts, S., Dirix, P.,
  Haustermans, K., and Brahme, A. (2012).
\newblock Dose prescription and treatment planning based on {{FMISO-PET}}
  hypoxia.
\newblock {\em Acta Oncologica}, 51(2):222--230.

\bibitem[Unkelbach et~al., 2018]{Unkelbach2018}
Unkelbach, J., Alber, M., Bangert, M., Bokrantz, R., Chan, T.~C., Deasy, J.~O.,
  Fredriksson, A., Gorissen, B.~L., Van~Herk, M., Liu, W., et~al. (2018).
\newblock Robust radiotherapy planning.
\newblock {\em Physics in Medicine \& Biology}, 63(22):22TR02.

\bibitem[Unkelbach et~al., 2007]{unkelbach2007}
Unkelbach, J., Chan, T.~C., and Bortfeld, T. (2007).
\newblock Accounting for range uncertainties in the optimization of intensity
  modulated proton therapy.
\newblock {\em Physics in Medicine \& Biology}, 52(10):2755.

\bibitem[Yan et~al., 1997]{yan1997adaptive}
Yan, D., Vicini, F., Wong, J., and Martinez, A. (1997).
\newblock Adaptive radiation therapy.
\newblock {\em Physics in Medicine \& Biology}, 42(1):123.

\end{thebibliography}

\newpage

\appendix

\section{Detailed Algorithm Descriptions}

\subsection{Row Generation Algorithm}\label{appx:algo}

In the following, let $\Phi(S,\hat H_1,\ldots,\hat H_K)$ denote an instance of~\eqref{prob:SR_robust} with constraints~\eqref{constr:ROB_HOMOGEN1} and~\eqref{constr:ROB_HOMOGEN2} defined only for subsets $S\subseteq T^2$ and constraints~\eqref{constr:OAR_UB} defined only for subsets $\hat H_1\subseteq H_1,\ldots,\hat H_K\subseteq H_K$. 
For a given solution $s=(x,\ubar{d})$, we denote by $\Infs_{const}(s,q,\epsilon)$ the set of the $q$ most violated constraints of type $const$, where each of these constraints is violated by at least $\epsilon$. Further, the set of constraints is assumed to be sorted  in descending violation order. Accordingly, $\Infs_{\eqref{constr:OAR_UB}_k}(s,n_H,\epsilon)$ and $\Infs_{\eqref{constr:ROB_HOMOGEN1}, \eqref{constr:ROB_HOMOGEN2}}(s,n_S,\epsilon)$ denote sets of at most  $n_H$ and $n_S$ most violated constraints (
each with violation of at least $\epsilon$) 
out of constraints \eqref{constr:OAR_UB} for OAR $k$, and constraints \eqref{constr:ROB_HOMOGEN1} and~\eqref{constr:ROB_HOMOGEN2}, respectively. Specifically, if  
these sets are all empty, then none of these constraints is violated by more than $\epsilon$. \noam{Algorithm~\ref{alg:RGA} is our row generation method for solving~\eqref{prob:SR_robust}, and is a fully detailed version of the sketch given as Algorithm~\ref{alg:RGAschem}. In all our experiments, the parameters  $\tau_{\Infs,3}=10^{-2}$, $n_H=2000$, and in Section~\ref{sec:compperform} different values of $n_0$ and $n_S$ are evaluated as indicated in Tables~\ref{table:runningtime}-\ref{table:runningtime2}.}

\begin{algorithm}
\caption{Row generation algorithm}\label{alg:RGA}
\LinesNumbered
\SetKwInOut{Input}{Input}
\SetKwInOut{Output}{Output}
\SetKwInOut{Init}{Initialization}
\Input{Initial beamlet configuration $x_0$, number of initial constraints for each OAR $n_0$, number of homogeneity constraints per iteration $n_S$, number of OAR constraints per iteration $n_H$, thresholds $\tau_{\Infs,1}>\tau_{\Infs,2}\geq 0,\tau_{\Infs,3}\geq0,\tau_{z}>0$ }
\Init{$S\leftarrow\emptyset$, $\hat H_1,\ldots,\hat H_K$ based on $x_0$ with  $n_0$ constraints in each OAR, 
$z_{\text{prev}}\leftarrow \infty$, $m_S\leftarrow 1$, $m_{H_k}\leftarrow 0, \; \forall k$\;}
\While{$ \sum_{k=1}^K m_{H_k}+m_S>0$}{
\If{$m_S>0$}
{Solve $\Phi(S,\hat H_1,\ldots,\hat H_K)$ and obtain its optimal solution $s$ and optimal value $z$\;}
\While{$|\Infs_{\eqref{constr:OAR_UB}}(s,1,\tau_{\Infs,1})|>0$ or $\frac{z_{\text{prev}}-z}{z_{\text{prev}}} > \tau_{z}$ or ($m_S=0$ and $ \sum_{k=1}^K m_{H_k}>0$)}{
$z_{\text{prev}}\leftarrow z$\label{INLOOPSTART}\;
\For{$k=1,\ldots,K$}{
Set $\tilde{H}\leftarrow\Infs_{\eqref{constr:OAR_UB}_k}(s,n_H,\tau_{\Infs,2})$, $m_{H_k}=|\tilde{H}|$, $\hat{H}_k\leftarrow \hat{H}_k \cup\tilde{H}$
}
\If{$\sum_{k=1}^K m_{H_k}>0$}{
Solve $\Phi(S,\hat H_1,\ldots,\hat H_K)$ and obtain its 
optimal solution $s$ and optimal value $z$\;}\label{INLOOPEND}}
Set $\tilde S \leftarrow \Infs_{  \eqref{constr:ROB_HOMOGEN1},\eqref{constr:ROB_HOMOGEN2}}(s,n_S,\tau_{\Infs,3})$, $m_S\leftarrow|\tilde{S}|$, $S\leftarrow S \cup\tilde{S}$\;
}
\Output{$s$, $z$}
\end{algorithm}

\subsection{Row and Column Generation Algorithm}\label{sec:RCGenAlg}

Here, $\Phi(S,\hat H_1,\ldots,\hat H_K,\beta)$  
denotes an instance of problem~\eqref{prob:PEN_ROB_FORM}  with constraints~\eqref{constr:ROB_HOMOGEN1} and~\eqref{constr:ROB_HOMOGEN2} defined only for a subset $S\subseteq T^2$, constraints~\eqref{constr:Hard_HK_BOUNDS2} defined only for subsets $\hat H_1\subseteq H_1,\ldots,\hat H_{K-1}\subseteq H_{K-1}$, and constraints \eqref{constr:SOFT_HK_BOUNDS},\eqref{constr:BOUND_DELTA} defined for subset $\hat H_K\subseteq H_K$. 
 
Accordingly, solutions of this problem are of the form $s=(x,\ubar{d},y_{\hat{H}_K})$. For solutions that are optimal to this problem, note that, for all $v\in \hat H_K$, $y_{v}=\max\{d_v(x)-\bar d_K,0\}$. For all  $v\in H_K\setminus \hat H_K$, we similarly define, for the given solution $s$,   $y_{v}=\max\{d_v(x)-\bar d_K,0\}$. 
We also define $$\Cols(\hat H_K, s,n_H,\epsilon)\in
\argmax_{H\subseteq H_K\setminus \hat H_K}\set{\vectornorm{y_{H}}_1}{y=((Dx)_{H_K}-\bar d_K\mathbbm{1})_+,\quad \card{H}\leq n_H,\quad \min_{v\in H}\{y_{v}\}\geq\epsilon}, 
$$ 
which is a subset of the voxels of OAR $K$ with the largest (up to) $n_H$ components of $y_{H_K\setminus \hat H_K}$ (those excluded from the current subproblem).
\begin{algorithm}[]
\caption{Row and column generation algorithm}\label{alg:RCGA}
\LinesNumbered
\SetKwInOut{Input}{Input}
\SetKwInOut{Output}{Output}
\SetKwInOut{Init}{Initialize}
\Input{Initial mode $\Phi(S,\hat H_1,\ldots,\hat H_K,\beta)$ with value $z$, number of homogeneity constraints per iteration $n_S$, number of OAR constraints per iteration $n_H$, thresholds $\tau_{\Infs,1}>\tau_{\Infs,2}\geq 0,\tau_{z}>0$ }
\Init{$m_S\leftarrow 1$, $m_{H_k}\leftarrow 0, \; \forall k$\;}

\While{$ \sum_{k=1}^K m_{H_k}+m_S>0$}{
\If{$m_S>0$}
{Solve $\Phi(S,\hat H_1,\ldots,\hat H_K,\beta)$ and obtain its optimal solution $s=(x,\ubar{d},y_{\hat{H}_K})$ and optimal value $z$\;}
\While{$|\Infs_{\eqref{constr:ALL_ROBUST}}(s,1,\tau_{\Infs,1})|>0$ or $\frac{z_{\text{prev}}-z}{z_{\text{prev}}} > \tau_{z}$ or ($m_S=0$ and $ \sum_{k=1}^K m_{H_k}>0$)}{
$z_{\text{prev}}\leftarrow z$\label{INLOOPSTART2}\;
\For{$k=1,\ldots,K-1$}{
Set $\tilde{H}\leftarrow\Infs_{\eqref{constr:OAR_UB}_k}(s,n_H,\tau_{\Infs,2})$, $m_{H_k}=|\tilde{H}|$, $\hat{H}_k\leftarrow \hat{H}_k \cup\tilde{H}$
}
Set $\tilde{H}\leftarrow \Cols(s,n_H,\tau_{\Infs,2})$, $m_{H_K}\leftarrow |\tilde{H}|$, $\hat{H}_K\leftarrow \hat{H}_K \cup \tilde{H}$\;
\If{$\sum_{k=1}^K m_{H_k}>0$}{
Solve $\Phi(S,\hat H_1,\ldots,\hat H_K,\beta)$ and obtain its 
optimal solution $s=(x,\ubar{d},y_{\hat{H}_K})$ and optimal value $z$\;}\label{INLOOPEND2}}
Set $\tilde S \leftarrow \Infs_{  \eqref{constr:ROB_HOMOGEN1},\eqref{constr:ROB_HOMOGEN2}}(s,n_S,\tau_{\Infs,2})$, $m_S\leftarrow|\tilde{S}|$, $S\leftarrow S \cup\tilde{S}$\;
}
\Output{$\Phi(S,\hat H_1,\ldots,\hat H_K,\beta)$, $s$, $z$}
\end{algorithm}

\subsection{Parametric Algorithm\label{appx:paramalgo}}

Suppose $s=(\ubar{d},x,y_{\hat{H}_K})$ and $s'=(\ubar{d}’,x’,y_{\hat{H}_K’}’)$ are basic feasible solutions, optimal to 
$\Phi(S,\hat H_1,\ldots,\hat H_K,\beta)$ and 
$\Phi(S',\hat H_1',\ldots,\hat H_K',\beta')$, respectively, where without loss of generality, $S\subseteq S'$ and $\hat H_{k}\subseteq \hat H_{k}'$, for all $k=1,\ldots, K$. In this context, we say that $s$ and $s’$ are adjacent if once extending subvector $y_{\hat H_K'\setminus\hat H_K}$ with $y_{\hat H_K}$ (appending the absent variables), the extended solution $(\ubar{d},x,y_{\hat{H}_K’})$ is an adjacent basic feasible solution to $s'$ in problem 
$\Phi(S',\hat H_1',\ldots,\hat H_K',\beta')$ (that is, each solution can be reached from the other through a single Simplex pivot). We denote this adjacency by the Boolean function $\adjacent(s,s')$, returning true if $s$ and $s'$ are adjacent, and false otherwise.

 \begin{algorithm}
    \caption{Parametric penalty algorithm\label{alg:PARAMETRIC}}
\LinesNumbered
\SetKwInOut{Input}{Input}
\SetKwInOut{Output}{Output}
\SetKwInOut{kwInit}{Initialize}
\Input{Initial bounds $0\leq \beta_l < \beta_u$, $s^{(1}=s^{(0)}=(\ubar{d}^{(0)},x^{(0)},y_{\hat{H}_K}^{(0)})$ optimal for $\beta_l$ (and where $\emptyset=\Cols(\hat H_K, s, 1,\tau_{\Infs,2})$), $\epsilon>0$, 
}
\kwInit{$\beta^{(1)}\leftarrow \beta_l$,
$\beta^{(0)}\leftarrow \beta-\epsilon$.}
\For{$\ell = 1,\ldots$}{
Determine objective coefficient interval $[\beta^{B}_l,\beta^{B}_u]$ that maintains the optimality of current partial basis $B_{s^{(\ell)}}$\label{algstep:BETAINTVL}\;
\uIf{
$\vectornorm{y_{\hat{H}_K}^{(\ell)}}_0 >  \theta$}
{ $\beta^{(\ell)} \leftarrow \min\{\beta^B_u + \epsilon,\beta_u\}$\; \label{algstep:BETAINC}}
\uElseIf{$\beta^{(\ell)}>\beta^{(\ell-1)}$ and $s^{(\ell)}$ and $s^{(\ell-1)}$ once extended are adjacent BFS of $\Phi(T,H_1,\ldots,H_K)$}{break\;}
\Else{
$\beta^{(\ell)}\leftarrow\max\{\beta^B_l-\epsilon,\beta_l\}$ \label{algstep:BETADEC}\; }
{Solve subproblem  $\Phi(S,\hat H_1,\ldots,\hat H_K,\beta^{(\ell)})$ to obtain 
a solution $s^{(\ell+1)}$ 
and  updated constraint sets $(S,\hat H_1,\ldots,\hat H_K)$\label{RCInvokeStep}}\;}
\Output{$\beta^{(\ell)}$ and solution $s^{(\ell)}$.}
\end{algorithm}

\section{Proofs}

\subsection{Proof of Lemma \ref{lemma:monotone}}
\label{appx:proof_lemma_monotome}

\begin{proof}
Since both $(x,\ubar{d},y)$ and $(\tilde x,\tilde{\ubar{d}},\tilde y)$ are feasible for problem~\eqref{prob:PEN_ROB_FORM}, $(x,\ubar{d},y)\in \mathcal{S}(\beta_1)$, $\beta_2>\beta_1$, and $y\geq 0$; thus, the following inequalities hold:
\begin{equation}\label{eq:opt_beta1}
 \ubar{d}-\beta_1\sum_{v\in H_K}y_v\geq \tilde{\ubar{d}}-\beta_1\sum_{v\in H_K}\tilde y_v\geq \tilde{\ubar{d}}-\beta_2\sum_{v\in H_K}\tilde y_v.
\end{equation}
Thus, we have obtained the third inequality in the lemma. 
Additionally, since $(\tilde x,\tilde{\ubar{d}},\tilde y)\in \mathcal{S}(\beta_2)$, the following inequality holds:
\begin{equation}\label{eq:opt_beta2}
\tilde{\ubar{d}}-\beta_2\sum_{v\in H_K}\tilde y_v\geq
 \ubar{d}-\beta_2\sum_{v\in H_K}y_v.
\end{equation}
Combining the first inequality of \eqref{eq:opt_beta1} with \eqref{eq:opt_beta2} leads to
\begin{equation*}
\beta_1(\sum_{v\in H_K}y_v-\sum_{v\in H_K}\tilde y_v)\leq \ubar{d}-\tilde{\ubar d}\leq \beta_2(\sum_{v\in H_K}y_v-\sum_{v\in H_K}\tilde y_v),
\end{equation*}
Which, due to $\beta_2>\beta_1$, implies that both $\sum_{v\in H_K}y_v\geq \sum_{v\in H_K}\tilde y_v$ and $\ubar{d}\geq\tilde{\ubar d}$. 
\end{proof}


\subsection{Penalty Parameter Bounds Lemma\label{appx:proof_lem_bounds}}

The next lemma establishes bounds on the value of $\beta^*$.

\begin{lemma}\label{lem:bounds}~
Consider an optimal solution of formulation~\eqref{prob:BOUND_ROB_FORM} with $\Theta =  0$, and let $\lambda_u$ be an optimal dual variable value corresponding to constraint~\eqref{constr:BUDGET}. 
Next, consider an optimal solution of formulation~\eqref{prob:BOUND_ROB_FORM} with $\Theta =  \lfloor\alpha |H_K|\rfloor(\hat d_K -\bar d_K)+\epsilon$ for every $\epsilon>0$, and let $\lambda_l$ be an optimal dual variable value corresponding to constraint~\eqref{constr:BUDGET}.
Then, $\lambda_l\leq \beta^*\leq \lambda_u$.
\end{lemma}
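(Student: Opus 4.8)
The plan is to exploit the fact that the penalty problem~\eqref{prob:PEN_ROB_FORM} is precisely the partial Lagrangian relaxation of~\eqref{prob:BOUND_ROB_FORM} obtained by dualizing the budget constraint~\eqref{constr:BUDGET} with multiplier $\beta$ (the constant term $\beta\Theta$ drops), so that $\mathcal S(\beta)$ collects the maximizers of $\ubar d-\beta\sum_{v\in H_K}y_v$ over the remaining linear constraints~\eqref{constr:bound_prev_constr}--\eqref{constr:YBND}. The crux I would establish first is a correspondence between the two formulations: if $\Theta\ge 0$ is fixed, $s^{\Theta}=(x^\Theta,\ubar d^\Theta,y^\Theta)$ is primal-optimal for~\eqref{prob:BOUND_ROB_FORM}, and $\lambda\ge 0$ is an optimal dual value of~\eqref{constr:BUDGET}, then $s^\Theta\in\mathcal S(\lambda)$. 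Since both are linear programs, this follows from strong LP duality together with complementary slackness: evaluating the penalized objective at $s^\Theta$ gives $\ubar d^\Theta-\lambda\sum_{v}y^\Theta_v=\ubar d^\Theta-\lambda\Theta$ (the budget is binding whenever $\lambda>0$, and the penalty term vanishes when $\lambda=0$), and strong duality identifies this value with the optimal value of~\eqref{prob:PEN_ROB_FORM} at $\beta=\lambda$. As $s^\Theta$ is feasible for the penalty problem's constraints, it is optimal there. This correspondence is the step I expect to require the most care.

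For the upper bound I would instantiate the correspondence at $\Theta=0$. The constraints $\sum_{v\in H_K}y_v\le 0$ together with $y\ge 0$ force $y^0=0$, so $\norm{y^0}_0=0\le\lfloor\alpha|H_K|\rfloor$; that is, $s^0$ already satisfies the dose-volume requirement~\eqref{DVCONSTRAINT}. The correspondence then gives $s^0\in\mathcal S(\lambda_u)$, so $\lambda_u$ lies in the feasible set over which $\beta^*$ is minimized, whence $\beta^*\le\lambda_u$.

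For the lower bound I would write $M=\hat d_K-\bar d_K$, $N=\lfloor\alpha|H_K|\rfloor$, and $\Theta_\epsilon=NM+\epsilon$. If $\lambda_l=0$ the inequality $\lambda_l\le\beta^*$ is immediate, so assume $\lambda_l>0$; complementary slackness then makes~\eqref{constr:BUDGET} binding, i.e.\ $\sum_{v}y^{\Theta_\epsilon}_v=\Theta_\epsilon=NM+\epsilon$, and by the correspondence $s^{\Theta_\epsilon}\in\mathcal S(\lambda_l)$. Now take any $\beta<\lambda_l$ and any $(x,\ubar d,y)\in\mathcal S(\beta)$. Monotonicity of the penalized deviation sum (Lemma~\ref{lemma:monotone}, applied to the pair $\beta<\lambda_l$) yields $\sum_{v}y_v\ge\sum_{v}y^{\Theta_\epsilon}_v=NM+\epsilon>NM$. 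On the other hand, constraint~\eqref{constr:YBND} bounds each $y_v$ by $M$, so any point with $\norm{y}_0\le N$ must satisfy $\sum_{v}y_v\le NM$; the strict gap $NM+\epsilon>NM$ — which is exactly why the $\epsilon$ is inserted — rules this out. Hence no optimal solution of~\eqref{prob:PEN_ROB_FORM} with $\beta<\lambda_l$ meets~\eqref{DVCONSTRAINT}, forcing $\beta^*\ge\lambda_l$.

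The remaining point needing attention is that Lemma~\ref{lemma:monotone} be applicable across the relevant range of $\beta$, for which it suffices to observe that $\lambda_l\le\lambda_u$; this follows from concavity (and monotonicity) of the optimal value of~\eqref{prob:BOUND_ROB_FORM} as a function of the right-hand side $\Theta$, since the optimal dual of~\eqref{constr:BUDGET} is a supergradient of that value and is therefore nonincreasing in $\Theta$. Combining the two bounds gives $\lambda_l\le\beta^*\le\lambda_u$ for every $\epsilon>0$, as claimed.
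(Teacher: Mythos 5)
Your proposal is correct and follows essentially the same route as the paper's proof: both dualize the budget constraint, use strong LP duality to place the primal optimum of~\eqref{prob:BOUND_ROB_FORM} in $\mathcal S(\lambda)$ for the optimal dual multiplier, obtain the upper bound from $y=\mathbf{0}$ being forced at $\Theta=0$, and obtain the lower bound from Lemma~\ref{lemma:monotone} combined with the bound $\norm{y}_0\leq\lfloor\alpha|H_K|\rfloor\Rightarrow\norm{y}_1\leq\lfloor\alpha|H_K|\rfloor(\hat d_K-\bar d_K)$ and the $\epsilon$ slack. Your direct observation that $\lambda_u$ lies in the feasible set defining $\beta^*$, and your remark on why $\lambda_l\leq\lambda_u$, are harmless minor streamlinings of the same argument.
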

\begin{proof}
For the following proofs, we use notation $\mathcal{X}$ to denote the feasible solution set of \eqref{prob:PEN_ROB_FORM}. 
Also, for every $\Theta\geq 0$, $Z_C(\Theta)$ denotes the optimal value of problem \eqref{prob:BOUND_ROB_FORM} with the right-hand side of constraint~\eqref{constr:BUDGET} given by $\Theta$. 
Finally, consider 
the dual function of~\eqref{prob:BOUND_ROB_FORM}, 
obtained by dualizing constraint~\eqref{constr:BUDGET} with dual multiplier $\beta$,  
$$q(\Theta,\beta)\equiv \beta \Theta + \max_{(x,\ubar d,y)\in \mathcal{X}}\{\ubar d-\beta\sum_{v\in H_k}y_v\}=\beta \Theta+z(\beta).$$
First, to prove the upper bound, let $s^*=(x^*,\ubar d^*,y^*)$ denote an optimal solution of formulation~\eqref{prob:BOUND_ROB_FORM} with $\Theta = 0$, and let $\lambda_u$ be an optimal dual variable value corresponding to constraint~\eqref{constr:BUDGET}.
Strong Lagrangian duality implies that $(x^*,\underbar d^*,y^*)\in \mathcal{S}(\lambda)$ and 
$q(0,\lambda)= 
\lambda \cdot 0 + z(\lambda)= \ubar{d}^* =Z_C(0)$,  
where the second equality follows from feasibility, which requires $y^*=\mathbf{0}$. Evidently, there exists a finite  
$\lambda$ such that 
$y(\lambda)=y^*=\mathbf{0}$, implying also that $\vectornorm{y(\lambda)}_0=\vectornorm{y(\lambda)}_1=0$. Thus, by Lemma~\ref{lemma:monotone}, $\beta^*\leq \lambda_u$.

To prove the lower bound, let $\epsilon>0$, let $s^*=(x^*,\underbar d^*,y^*)$ denote an optimal solution of formulation~\eqref{prob:BOUND_ROB_FORM} with $\Theta =  \lfloor\alpha |H_K|\rfloor(\hat d_K -\bar d_K)+\epsilon$, and let $\lambda_l$ be an optimal dual variable value corresponding to constraint~\eqref{constr:BUDGET}.
 By definition of $\lambda_l$ and from strong duality, we have 
 $(x^*,\ubar{d}^*,y^*)\in \mathcal{S}(\lambda_l)$, and from the feasibility of $y^*$ for problem \eqref{prob:BOUND_ROB_FORM}, it follows that 
 $\norm{y^*}_1\leq \Theta$. Moreover, as a result of strong duality,
 $$q(\Theta,\lambda_l)=\lambda_l\Theta+
 z(\lambda_l)=\min_{\beta\geq 0}q(\Theta,\beta)=Z_C(\Theta)=\ubar{d}^*,$$
 and either $\norm{y^*}_1=\Theta$ or $\lambda_l=0$. If $\lambda_l=0$, then, by feasibility of $\beta^*$ for the dual problem, $\beta^*\geq \lambda_l=0$. Otherwise 
 (when $\norm{y^*}_1=\Theta$), by definition of $\beta^*$, $\vectornorm{y(\beta^*)}_0\leq \lfloor \alpha\card{H_K}\rfloor$, which implies that $\vectornorm{y(\beta^*)}_1\leq  \lfloor\alpha\card{H_K}\rfloor(\hat d_K-\bar d_K)=\Theta-\epsilon=\norm{y^*}_1-\epsilon$. Thus, it follows from Lemma~\ref{lemma:monotone} that $\beta^*\geq \lambda_l$. 
\end{proof}

\subsection{Proof of Proposition~\ref{prop:param_finite_steps}\label{appx:proof_param_finite}}
\begin{proof}

First observe that at the start of each iteration of~Algorithm~\ref{alg:PARAMETRIC},  $\beta>\beta_{\text{prev}}$ unless, in the previous iteration, $\vectornorm{y}_0 \leq  \lfloor\alpha\card{H_k}\rfloor$ and $\adjacent(s,s_{\text{prev}})$ does not hold (so a basic feasible solution has been ``skipped''). Let $\beta_{i-2},\beta_{i-1},\beta_{i}$ denote a sequence of such penalty parameters $\beta$ in iterations $i-2$, $i-1$ and $i$, respectively,
where $\beta_{i-2} < \beta_i < \beta_{i-1}$ and where the bases corresponding to $\beta_{i-2}$ and $\beta_{i-1}$ are not adjacent.   
This implies that once all active constraints have been generated for the basic feasible solution corresponding to $\beta_{i-1}$, a distinct basic feasible solution can be found in $[\beta_{i-2},\beta_{i-1}]$. 
The finiteness of the number of steps for determining the returned $\beta$ follows from the existence of a finite number of basic feasible solutions. Then, by the uniqueness assumption, the fundamental theorem of LP, and the fact that the algorithm traverses all basic optimal solutions for the penalty parameter values in $[\beta_l,\beta]$, it follows that the algorithm terminates with $\beta=\beta^*$.
\end{proof}

\section{Conversion of PET Image SUV to OMF}\label{appx:suv}

In our experiments, imaging standard uptake values (SUV) are read directly from an FMISO-PET scans. The SUV values are typically normalized with respect to well oxygenated region, and in our case the reference value {of $pO2=26$ is fixed as the the median over the entire brain (excluding the tumor)~\citep{Mckeown2014defining}}. 
These normalized {SUV} readings are converted to oxygen pressure $pO2$ values using the formula
\[
pO2(SUV) = \frac{(A-SUV)\times C}{SUV - A+B}, 
\] where $A=10.9$, $B=10.7$ and $C=2.5$~\shimritr{(\citealp{toma2012} and \citealp{lindblom2017defining})}.
Then, the OER is given by
\[
OER(pO2) = \frac{m\times pO2+K}{pO2+K}; 
\] see for example~\cite{Titz2008}. The values assumed here are $K=3$ and $m=3$ following~\cite{Saka2014}. 
Finally, the OMF is simply OER normalized as a proportion of the maximum OER value in the image. Finally, we apply interpolation to deduce voxel OMF values for the CT images that may have a higher resolution of voxels than the corresponding PET images. 

{Note that the values of the constants in this appendix appear to be highly uncertain according to the scientific literature. In particular, for the conversion of $pO2$ to $OER$,~\cite{Titz2008} consider a range of possible values $K=2,3,4$. Note that for $pO2=15$ it implies that the corresponding $OMF$ value could vary in the interval $(0.875,0.930)$ (assuming a well oxygenated reference point has $pO2=151$) due only to parameter $K's$ uncertainty, while significant uncertainty may be associated with the values of other parameters as well. Based on such a sensitivity analysis, experimenting with values of $\delta$ in $[0,0.14]$ would be reasonable.}

\section{Additional Numerical Results}\label{appx:numerical}

Figure~\ref{fig:OPTVSDELTA} shows the optimal solution value as a function of $\delta$ for different values of $\mu$. In this figure, points that can only be satisfied by the zero beamlet intensity configuration ($x=0$) are omitted from the graph (this applies to all figures throughout the paper). In particular, for each value of $\mu$, only the zero solution is feasible below a certain $\delta$ threshold. 
For example, $\mu=1.125$ 
cannot be satisfied with any positive dose (accordingly, positive beamlet intensity) when $\delta\geq 0.03$. This is because $\mu$ 
constrains the biological homogeneity,  
which is directly affected by the ``size'' of the uncertainty set corresponding to the magnitude of the parameters $\delta$ and $\gamma$.

\begin{figure}[H]
    \centering
    \includegraphics[scale=0.8]{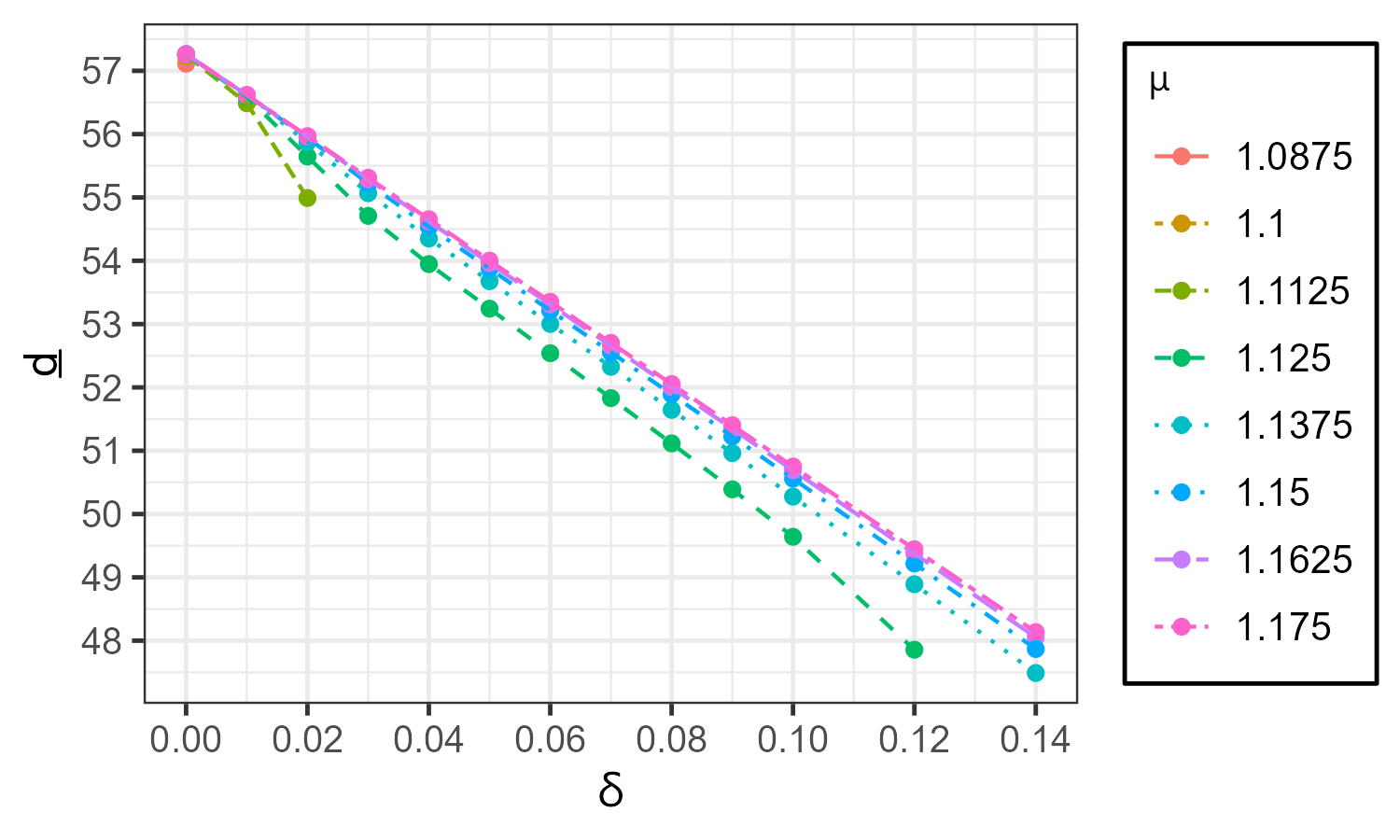}
     \vskip-10pt
    \caption{Optimal solution value of~\eqref{prob:SR_robust} for $\gamma=0.04$ and different values of $\delta$.}
    \label{fig:OPTVSDELTA}
\end{figure}

\begin{figure}[H]
    \centering
    \includegraphics[width=\textwidth]{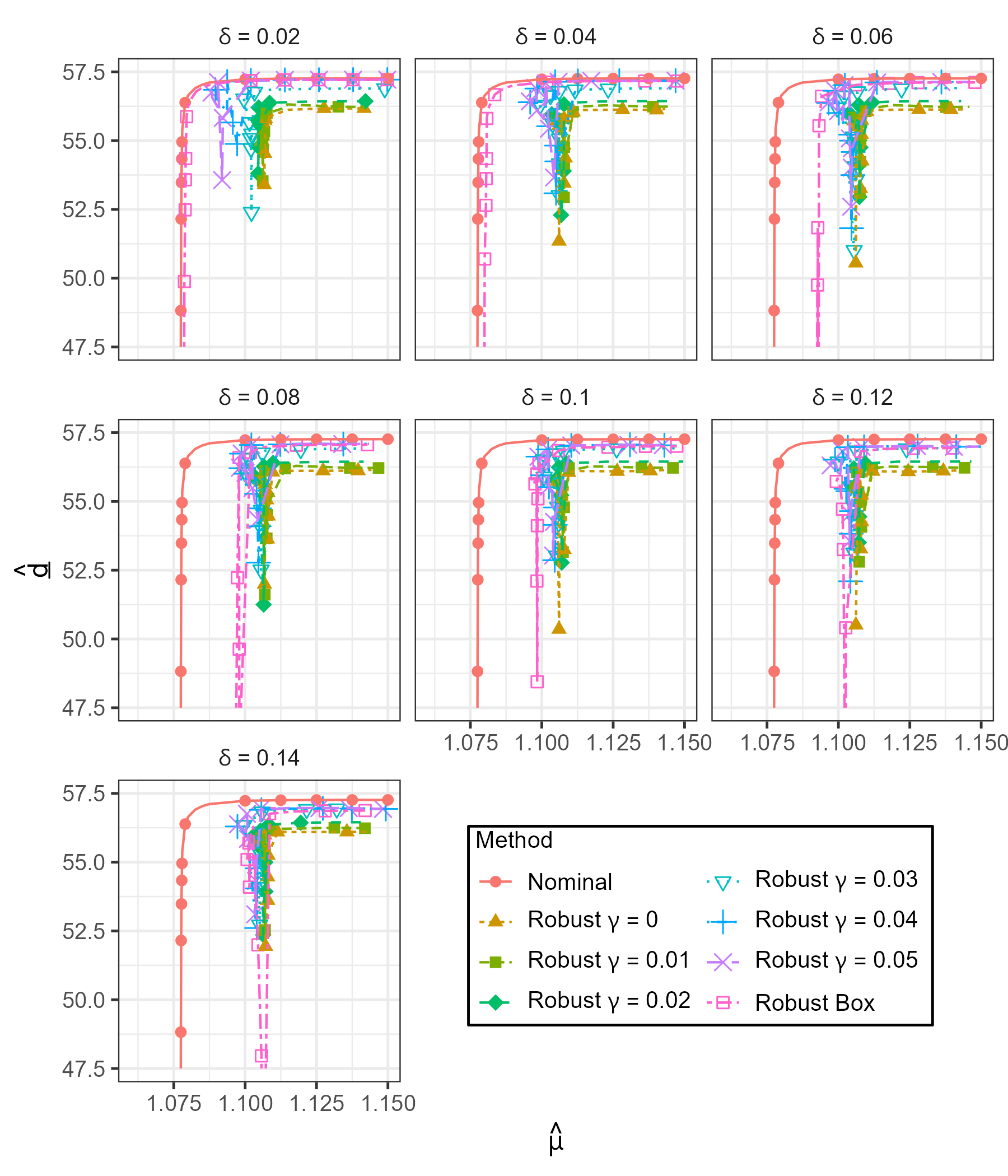}
    \caption{Nominal performance of optimal solutions
   to~\eqref{prob:SR_robust} with different values of $\gamma$ and $\delta$, compared with the nominal solution.}
    \label{fig:nominal_deltaall}
\end{figure}
\begin{figure}[H]
    \centering
    \includegraphics[width=0.95\textwidth]{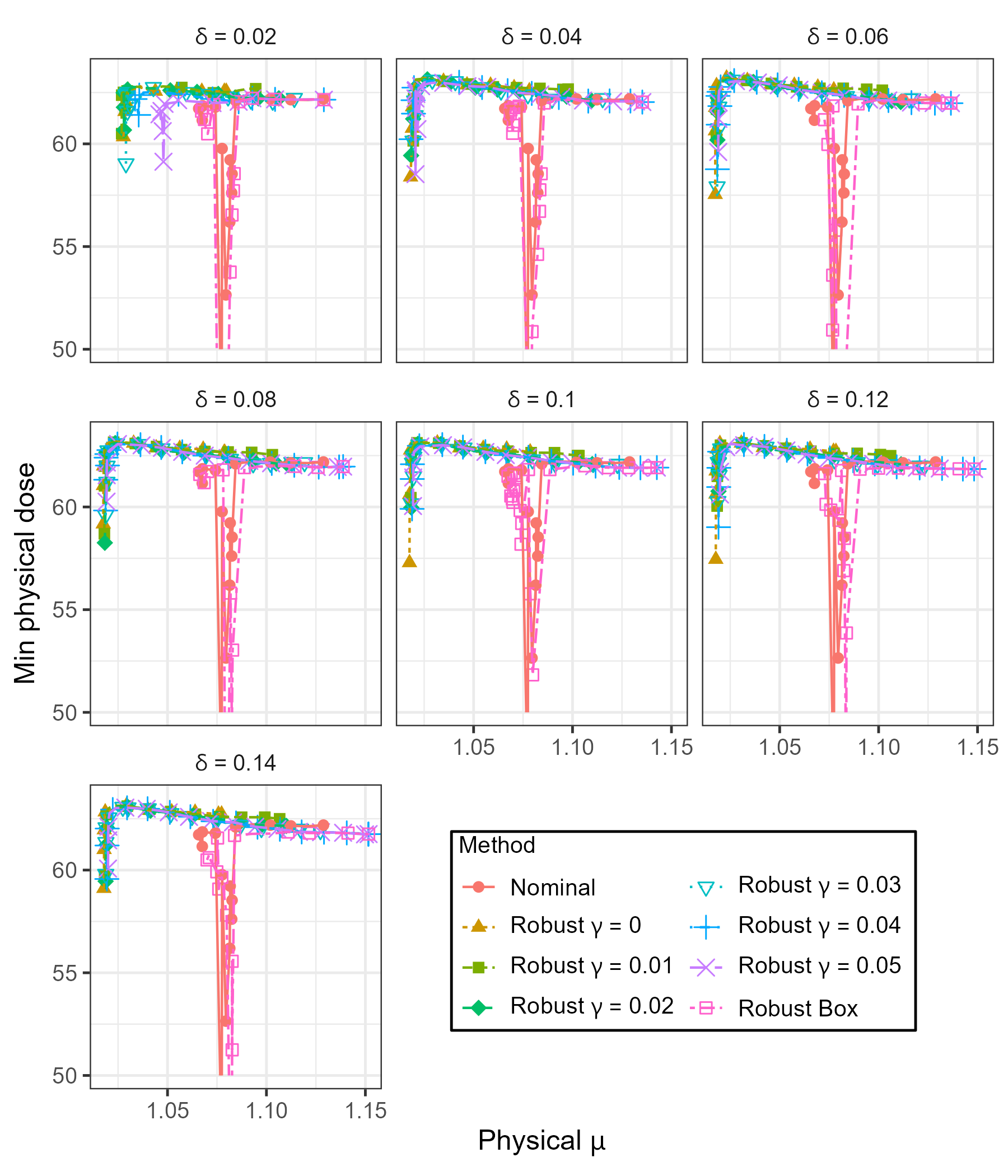}
    \caption{Physical performance of optimal solutions
   to~\eqref{prob:SR_robust} with different values of $\gamma$ and $\delta$, compared with the nominal solution.}
    \label{fig:physical_deltaall}
\end{figure}
\begin{figure}[H]
   \centering
     \includegraphics[width=\textwidth]{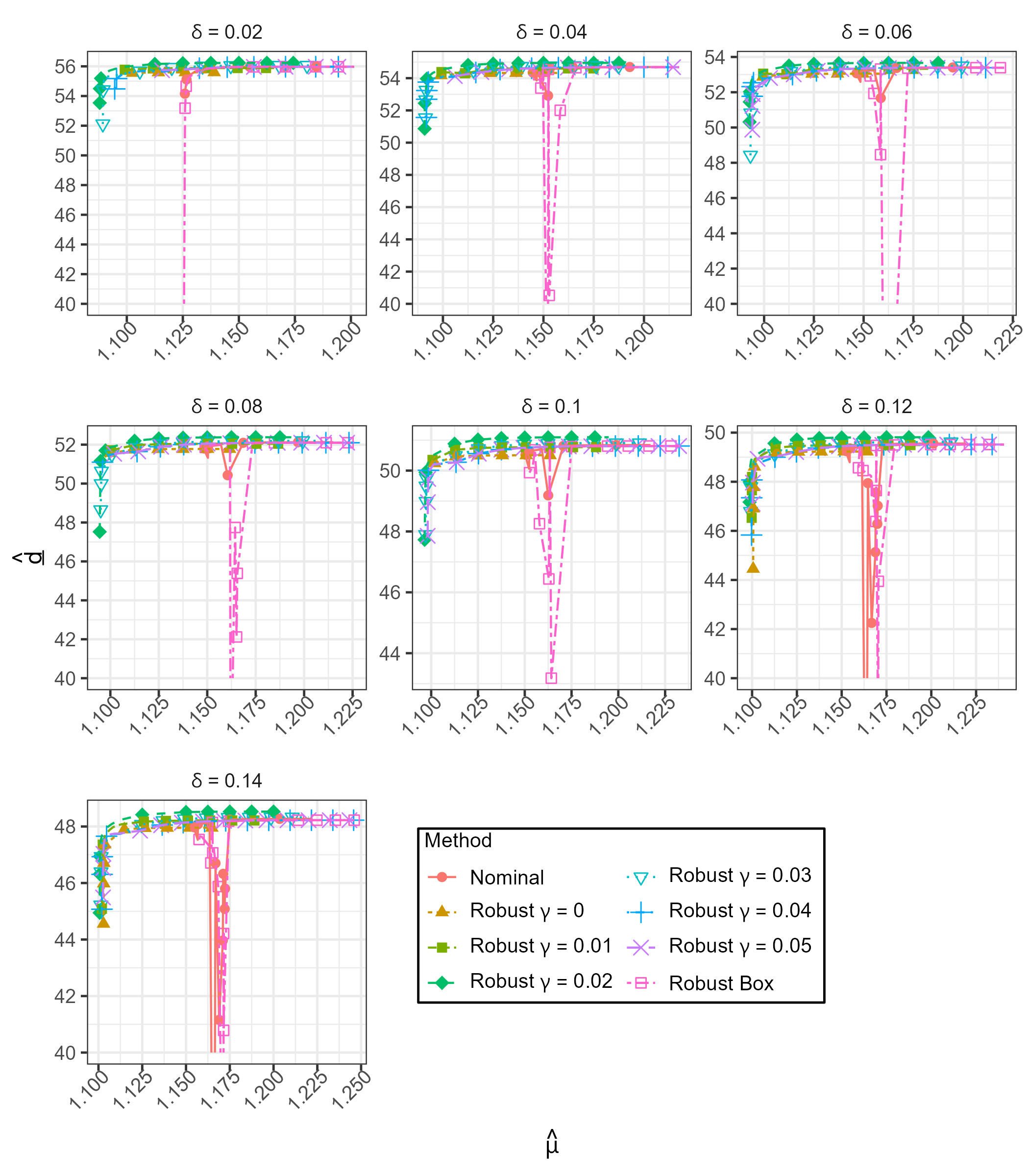}
   \caption{Worst-case performance ($\hat{\mu}$, $\hat{\underbar{d}}$) of solutions optimal to~\eqref{prob:SR_robust} with misspecified $\gamma$, under the assumption that the ``true" $\USB$ has $\gamma=0.02$ and $\delta$ as indicated. The performance of the nominal solution is also shown for comparison.}
   \label{fig:preformance_gamma0.02}
\end{figure}

\begin{figure}
    \includegraphics[width=\textwidth]{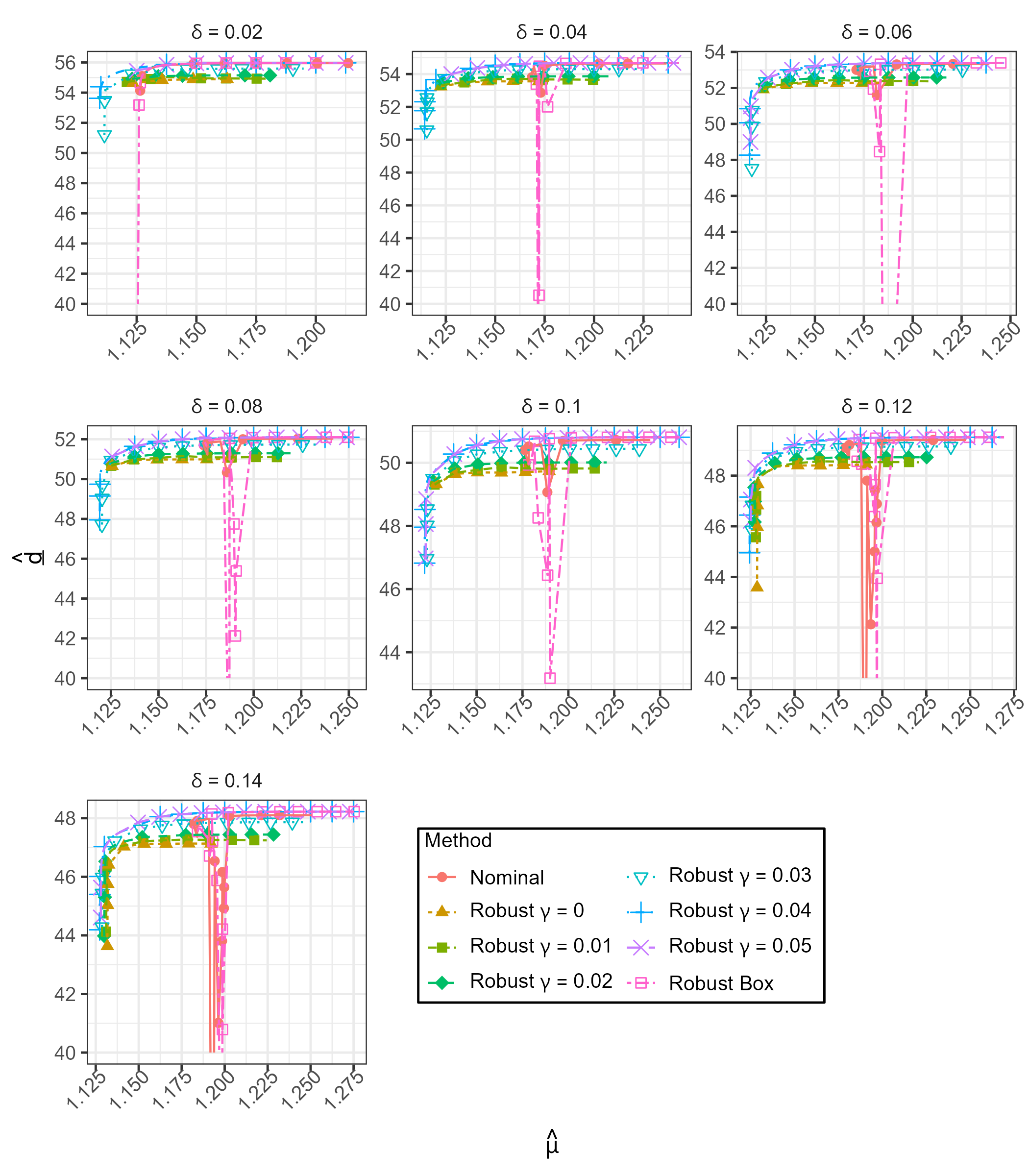}
   \caption{Worst-case performance ($\hat{\mu}$, $\hat{\underbar{d}}$) of solutions optimal to~\eqref{prob:SR_robust} with misspecified $\gamma$, under the assumption that the ``true" $\USB$ has $\gamma=0.04$ and $\delta$ as indicated. The performance of the nominal solution is also shown for comparison.}
\label{fig:preformance_gamma0.04}
\end{figure}

\end{document}